\definecolor{labelkey}{rgb}{0.0, 0.8, 0.3}
\let\c@author\relax
\numberwithin{equation}{section}
\declaretheorem[name=Lemma]{lemma}
\declaretheorem[name=Proposition]{prop}
\declaretheorem[name=Remark, style=remark]{rema}
\declaretheorem[name=Theorem]{thm}
\DeclareMathOperator{\clip}{clip}
\DeclareMathOperator*{\minimize}{minimize}
\DeclareMathOperator{\GM}{\textsc{GM}}
\begin{document} 

\begin{frontmatter}

	\title{Averaging on the Bures--Wasserstein manifold: dimension-free convergence of gradient descent}
	\runtitle{Averaging on the Bures--Wasserstein manifold}
 
	\author{Jason M.\ Altschuler \hfill jasonalt@mit.edu \\
	Sinho Chewi \hfill schewi@mit.edu \\
	Patrik Gerber \hfill prgerber@mit.edu \\
	Austin J.\ Stromme \hfill astromme@mit.edu}

	\address{{Jason M.\ Altschuler}\\
		{Department of EECS} \\
		{Massachusetts Institute of Technology}\\
		{77 Massachusetts Avenue,}\\
		{Cambridge, MA 02139-4307, USA}\\
	}
	
		\address{{Sinho Chewi}\\
		{Department of Mathematics} \\
		{Massachusetts Institute of Technology}\\
		{77 Massachusetts Avenue,}\\
		{Cambridge, MA 02139-4307, USA}\\
	}
	
			\address{{Patrik Gerber}\\
		{Department of Mathematics} \\
		{Massachusetts Institute of Technology}\\
		{77 Massachusetts Avenue,}\\
		{Cambridge, MA 02139-4307, USA}\\
	}
				\address{{Austin J.\ Stromme}\\
		{Department of EECS} \\
		{Massachusetts Institute of Technology}\\
		{77 Massachusetts Avenue,}\\
		{Cambridge, MA 02139-4307, USA}\\
}

\runauthor{Altschuler, Chewi, Gerber, and Stromme}

\begin{abstract}
    We study first-order optimization algorithms for computing the barycenter of Gaussian distributions with respect to the optimal transport metric.
    Although the objective is geodesically non-convex,
    Riemannian GD empirically converges rapidly, in fact faster than off-the-shelf methods such as Euclidean GD and SDP solvers.
    This stands in stark contrast to the best-known theoretical results for Riemannian GD, which depend
    exponentially on the dimension. 
    In this work, we prove new geodesic convexity results which provide stronger control of the iterates, yielding
    a dimension-free convergence rate.
    Our techniques also enable the analysis of two related notions of averaging, the entropically-regularized barycenter and the geometric median, providing the first convergence guarantees for Riemannian GD for these problems.
\end{abstract}

\end{frontmatter}

\doparttoc
\faketableofcontents

\section{Introduction}

Averaging multiple data sources is among the most classical and fundamental subroutines in data science. However, a modern challenge is that data is often more complicated than points in $\R^d$. In this paper, we study the task of averaging probability distributions on $\R^d$, a setting that commonly arises in machine learning and statistics~\cite{cuturi2014barycenters,ho2017multilevel,srivastava2018scalable,dognin2019wasserstein}, computer vision and graphics~\cite{rabin2011wasserstein,solomon2015convolutional}, probability theory~\cite{knottsmith1994cyclicmonotonicity,ruschendorf2002n}, and signal processing~\cite{elvander2020multi}; see also the surveys~\cite{peyre2019computational, panaretos2019statistical} and the references within.

The Wasserstein barycenter~\cite{agueh2011barycenter} has emerged as a particularly canonical notion of average. Formally, let $\mc P_2(\R^d)$ denote the space of probability measures on $\R^d$ with finite second moment, let $P$ be a probability measure over $\mc P_2(\R^d)$, and let $W_2$ denote the $2$-Wasserstein distance (i.e., the standard optimal transport distance). Then, the \emph{Wasserstein barycenter} of $P$ is a solution of
\begin{align}\label{eq:w2_bary}
    \minimize_{b \in \mc P_2(\R^d)} \qquad \int W_2^2(b, \cdot) \, \D P\,.
\end{align}
A related notion of average is the \emph{entropically-regularized Wasserstein barycenter} of $P$~\cite{kroshnin2018barycentersmk, bigotcazellespapadakis2019penbarycenter, carliereichingerkroshnin2020entropicbarycenter}, which is defined to be a solution of
\begin{align}\label{eq:w2_reg_bary}
    \minimize_{b \in \mc P_2(\R^d)} \qquad \int W_2^2(b,\cdot) \, \D P + \on{ent}(b)\,,
\end{align}
where $\on{ent}$ is an entropic penalty which allows for incorporating prior knowledge into the average.
Lastly, a third related notion of average with better robustness properties (e.g., with a breakdown point of 50\%~\cite{FKJ}) is the \emph{Wasserstein geometric median} of $P$, which is defined to be a solution of 
\begin{align}\label{eq:w2_median}
    \minimize_{b \in \mc P_2(\R^d)} \qquad\int W_2(b, \cdot) \, \D P\,.
\end{align}

Importantly, while these three notions of average can be defined using other metrics in lieu of $W_2$, the Wasserstein distance is critical for many applications since it enables capturing geometric features of the distributions~\cite{cuturi2014barycenters}.

The many applications of Wasserstein barycenters and geometric medians (see, e.g.,~\cite{carlier2010matching, rabin2011wasserstein, cuturi2014barycenters, gramfort2015fast, rabin_papadakis_2015,
solomon2015convolutional, bonneel2016wasserstein, srivastava2018scalable, legouic2020fairness}) have inspired significant research into their mathematical and statistical properties since their introduction roughly a decade ago~\cite{agueh2011barycenter,rabin2011wasserstein}. 
For instance, on the mathematical side it is known that under mild conditions, the barycenter and geometric median exist, are unique, and admit dual formulations related to multimarginal optimal transport problems~\cite{carlier2010matching,agueh2011barycenter,carlier2015numerical}.
On the statistical side,~\cite{panaretos2016point, agueh2017centrale, legouic2015consistency, bigot2018barycenter, kroshnin2019barycenters, ahidarcoutrix2018convergence, legouic2022fast} provide finite-sample and asymptotic statistical guarantees for estimating the Wasserstein barycenter from samples.

However, computing these objects is challenging because of two fundamental obstacles. The first is that in general, barycenters and geometric medians can be complicated distributions which are much harder to represent (even approximately) than the input distributions.
The second is that generically, these problems are computationally hard in high dimensions. For instance, Wasserstein barycenters and geometric medians of discrete distributions are NP-hard to compute (even approximately) in high dimension~\cite{altschulerboixadsera2022barycenterhard}.

\paragraph*{Algorithms for averaging on the Bures--Wasserstein manifold.} Nevertheless, these computational obstacles can be potentially averted in parametric settings. This paper as well as most of the literature~\cite{esteban2016barycenters, zemel2019procrustes, chewietal2020buresgd, backhoffveraguas2022barycenters} on parametric settings focuses on the commonly arising setting where $P$ is supported on Gaussian distributions.\footnote{In the setting of Gaussian distributions, the Wasserstein barycenter was first studied in the 1990s~\cite{olkin1993maximum,knottsmith1994cyclicmonotonicity}.} As noted in~\cite{esteban2016barycenters}, the Gaussian case also encompasses general location-scatter families.

There are two natural families of approaches for designing averaging algorithms in this setting. Both exploit the fact that modulo a simple re-centering of all distributions, the relevant space of probability distributions is isometric
to the \emph{Bures--Wasserstein manifold}, i.e., the cone of positive semidefinite matrices equipped with the Bures--Wasserstein metric (background is given in Section~\ref{scn:preliminaries}). 

The first approach is simply to recognize the (regularized) Wasserstein barycenter problem as a convex optimization problem over the space of positive semidefinite matrices and apply off-the-shelf methods such as Euclidean gradient descent or semidefinite programming solvers. However, these methods have received little prior attention for good reason: they suffer from severe scalability and parameter-tuning issues (see Section~\ref{scn:bures_gd_numerics} for numerics). Briefly, the underlying issue is that these algorithms operate in the standard Euclidean geometry rather than the natural geometry of optimal transport. Moreover, this approach does not apply to the Wasserstein geometric median problem because even in one dimension, it is non-convex in the Euclidean geometry.

A much more effective approach in practice (see Section~\ref{scn:bures_gd_numerics} for numerics) is to exploit the geometry of the Bures--Wasserstein manifold via geodesic optimization. 
Prior work has extensively pursued this direction, investigating the effectiveness of (stochastic) Riemannian gradient descent for computing Wasserstein barycenters, see, e.g., \cite{esteban2016barycenters, zemel2019procrustes, chewietal2020buresgd, backhoffveraguas2022barycenters}. 

\paragraph*{Challenges for geodesic optimization over the Bures--Wasserstein manifold.} Although geodesic optimization is natural for this problem, it comes with several important obstacles: the non-negative curvature of the Bures--Wasserstein manifold necessitates new tools for analysis, and moreover both the barycenter and geometric median problems are \emph{non-convex} in the Bures--Wasserstein geometry. (These two issues are in fact intimately related, see Appendix~\ref{scn:curv_bary}.) This prevents applying standard results in the geodesic optimization literature (see, e.g.,~\cite{zhangsra2016geodesicallycvx,boumal2020introduction}) since in general it is only possible to prove local convergence guarantees for non-convex problems. 

\par For the Wasserstein barycenter problem, it is possible to interpret Riemannian gradient descent (with step size one) as a fixed-point iteration, and through this lens establish asymptotic convergence~\cite{esteban2016barycenters, zemel2019procrustes, backhoffveraguas2022barycenters}.
Obtaining non-asymptotic rates of convergence is more challenging because it requires developing quantitative proxies for the standard convexity inequalities needed to analyze gradient descent. The first such result was achieved by~\cite{chewietal2020buresgd}, showing that Riemannian gradient descent converges to the Wasserstein barycenter at a linear rate. Yet their convergence rate depends exponentially on the dimension $d$, and also their work does not extend to the Wasserstein geometric median or regularized Wasserstein barycenter.

\subsection{Contributions}

In this paper, we analyze first-order optimization algorithms on the Bures--Wasserstein manifold. We summarize our main results here and overview our techniques in the next section.

\paragraph*{From exponential dimension dependence to dimension-free rates.} In Section~\ref{scn:bary}, we show that for the Wasserstein barycenter problem, Riemannian gradient descent enjoys dimension-free convergence rates (Theorem~\ref{thm:bures_gd}). This eliminates the exponential dimension dependence of state-of-the-art convergence rates~\cite{chewietal2020buresgd}, which aligns with the empirical performance of this algorithm (see Figure~\ref{fig:gd and sgd vs d}). It also stands in sharp contrast to the setting of discrete distributions in which there are computational complexity barriers to achieving even polynomial dimension dependence~\cite{altschulerboixadsera2022barycenterhard}.

Moreover, in Theorem~\ref{thm:bures_avg_case}, we give a refinement of this result which replaces the worst-case assumption of uniform bounds on the matrices' eigenvalues with significantly weaker average-case assumptions.

\begin{figure}
\centering
\includegraphics[width=0.5\textwidth]{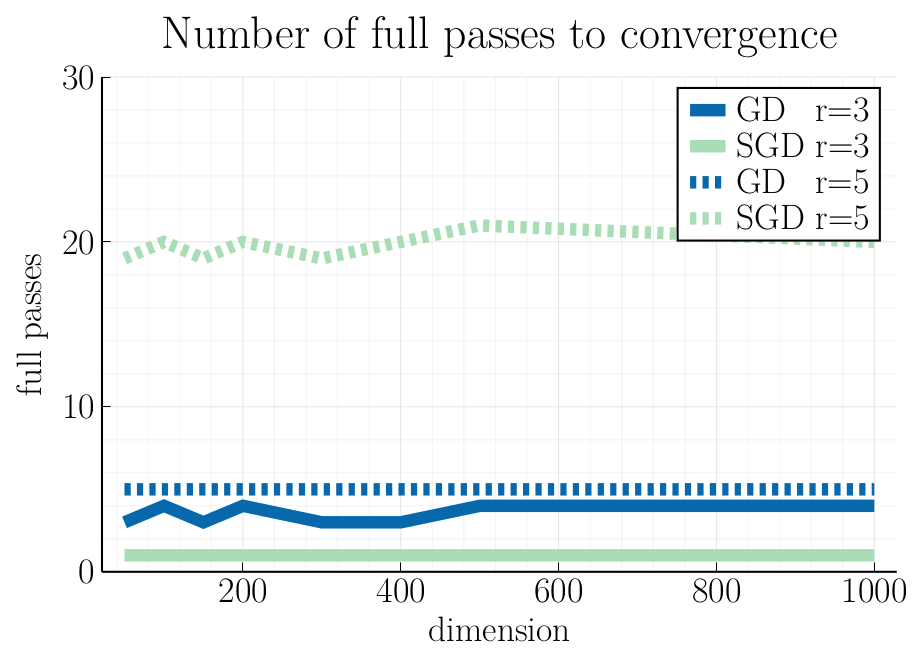}
        \caption{Passes until convergence error $10^{-r}$ to the barycenter, for $r \in \{3, 5\}$. 
        This is \emph{dimension independent} for Riemannian GD and SGD---consistent with our main results. Details in Section~\ref{scn:bary}.}    
        \label{fig:gd and sgd vs d}
\end{figure}

\paragraph*{Beyond barycenters.} In Sections~\ref{scn:reg_bary} and~\ref{scn:median}, we show how our analysis techniques also enable proving fast convergence of Riemannian gradient descent for computing regularized barycenters (Theorem~\ref{thm:ent_main}) and geometric medians (Theorem~\ref{thm:median_guarantee}). To the best of our knowledge, these are the first guarantees for Riemannian gradient descent for notions of averaging on the Bures--Wasserstein manifold beyond the barycenter.

\subsection{Techniques}

Here we briefly sketch the specific technical
challenges we face and how we address them to analyze Riemannian gradient descent for the three notions of
Bures--Wasserstein average: barycenter, regularized barycenter, and median.
Although each analysis necessarily exploits particularities of its own
objective,
the common structure between these analyses may be of
interest for studying other geodesically non-convex optimization problems.

\paragraph*{Overcoming non-convexity.}
As we discuss in Appendix~\ref{scn:curv_bary}, there is a close connection between the second-order
behavior of these objective functionals and the non-negative curvature of the Bures--Wasserstein manifold.
In particular, while non-negative curvature is used to prove smoothness properties for the three functionals, it also leads to them all being geodesically non-convex.
To circumvent this issue, we
establish gradient domination conditions,
also known as Polyak--\L{}ojasiewicz inequalities~\cite{kariminutinischmidt2016pl}, which intuitively are quantitative proxies for strong convexity in the non-convex setting. Proving such inequalities
requires synthesizing general optimization principles with specialized arguments based on the theory of optimal transport. We ultimately show that these inequalities hold with
constants depending on the conditioning of the iterates, i.e., the ratio between the maximum and minimum eigenvalues of the corresponding covariance matrices.

\paragraph*{Overcoming ill-conditioned iterates.} 
So long as smoothness and gradient domination inequalities hold
at the current iterate, standard optimization results guarantee that the next iterate of gradient descent makes progress.
However,
the amount of progress
degrades if the iterates are poorly conditioned.
Thus the second major obstacle is to control the regularity of the iterates. Here, the primary technical tool is shared across
the analyses. Informally, it states that if the objective is a sum of functions,
each of whose gradients point towards well-conditioned matrices, then the gradient descent iterates remain well-conditioned. Formally, this is captured by the following geometric result, which may be of independent interest. Below, $\psd$ denotes the set of $d \times d$ positive definite matrices. See Appendix~\ref{scn:gen_geod_cvxty} for a review of the relevant geometric concepts, and see Appendix~\ref{scn:proof_for_geod_cvx} for the proof, discussion of tightness, and complementary results.

\begin{thm}\label{thm:sqrt_lambda_min_concave}
    Let $0 < \alpha \le \beta < \infty$. Let $Q$ be any distribution over $\psd$ such that
    \begin{align*}
        \Bigl(\int \sqrt{\lambda_{\min}(\Sigma)} \, \D Q(\Sigma)\Bigr)^2 \ge \alpha \qquad\text{and}\qquad\int \lambda_{\max}(\Sigma) \, \D Q(\Sigma) \le \beta\,.
    \end{align*}
    Then, for any matrix $\Sigma_0$ with eigenvalues bounded below by $\frac{\alpha}{4}$ and any $0 \le \eta \le \frac{\alpha}{2\beta}$, the generalized barycenter of $(1-\eta) \, \delta_{\Sigma_0} + \eta \, Q$ at $\Sigma_0$  also has eigenvalues lower bounded by $\frac{\alpha}{4}$.
\end{thm}

Using this theorem in conjunction with careful analysis of the objective functions, we establish global convergence guarantees for first-order geodesic optimization.

In an earlier version of our paper, we incorrectly claimed that $-\sqrt{\lambda_{\min}}$ and $\sqrt{\lambda_{\max}}$ are convex along generalized geodesics, which is stronger than Theorem~\ref{thm:sqrt_lambda_min_concave}. (The error in the proof stemmed from our use of an incorrect result in the literature, namely~\cite[Corollary 3.5]{lawsonlim2001geometricmean}.) This version of the paper fixes this issue; see Remark~\ref{rem:what_went_wrong} for a detailed discussion.

\subsection{Other related work}\label{scn:prior_work}

\paragraph*{Averaging on curved spaces.} Averages such as barycenters and medians on curved spaces 
have become popular due to the applications in machine learning, computer vision, analysis, radar signal processing~\cite{ABY}, and brain-computer interfaces~\cite{yger16,congedo17}. While their mathematical properties such as existence and uniqueness are fairly well-understood~\cite{afsari2011riemannian}, their computation is an active area of research~\cite{Weiszfeld,VardiZhang,sturm2003npc,yang2010riemannian,bini2013computing, bacak,OhtaPafia}. 
For the Wasserstein barycenter problem in particular, there have been a multitude of approaches proposed for both the discrete setting (see, e.g.,~\cite{cuturi2014barycenters,benamou2015iterative,carlier2015numerical,borgwardt2017strongly,pmlr-v97-kroshnin19a,Dvi20,linetal2020barycenter,haasler2020multi,guminov2021accelerated,altschuler2021wasserstein, lin2022complexity}) and the continuous setting (see, e.g.,~\cite{cohen2020estimating,fan2020scalable,lietal2020regbarycenters, korotinetal2021barycenter}).

\paragraph*{Optimal transport and regularization.}
Our work on the entropically-regularized barycenter follows a long and fruitful interplay between optimal transport and entropic regularization.
Entropic regularization yields computational speedups~\cite{wilson1969use,cuturi2013sinkhorn, altschulernilesweedrigollet2017sinkhorn, peyre2019computational}, brings fundamental connections to statistical problems such as the Schr\"odinger bridge~\cite{schrodinger1931umkehrung,leonard2013survey} and maximum likelihood estimation~\cite{rigolletweed2018entropicotmle}, and enjoys much more regularity from a PDE perspective~\cite{leonard2012schrodingermk, carliereichingerkroshnin2020entropicbarycenter}, which in turn has been useful for rigorously establishing functional inequalities~\cite{ledoux2018remarks, fathigozlanprodhomme2020caffarelli, gentiletal2020entropichwi}.
Entropic regularization of optimal transport specifically between Gaussians has also been extensively studied in the literature~\cite{del2020statistical,janati2020entropic,mallasto2020entropy}.

\section{Preliminaries}\label{scn:preliminaries}

We write $\sym$ for the space of symmetric $d\times d$ matrices, and $\psd$ for the open subset of $\sym$ consisting of positive definite matrices.

Given probability measures $\mu$ and $\nu$ on $\R^d$ with finite second moment, the $2$-Wasserstein distance between $\mu$ and $\nu$ is defined as
\begin{align}\label{eq:defn_w2}
    W_2^2(\mu,\nu)
    & \deq  \inf_{\pi \in \Pi(\mu,\nu)} \int \norm{x-y}^2 \, \D \pi(x,y)\,,
\end{align}
where $\Pi(\mu,\nu)$ denotes the set of couplings of $\mu$ and $\nu$, i.e., the set of probability measures on $\R^d\times\R^d$ whose marginals are respectively $\mu$ and $\nu$. If $\mu$ and $\nu$ admit densities with respect to the Lebesgue measure on $\R^d$, then the infimum is attained, and the optimal coupling is supported on the graph of a map, i.e., there exists a map $T : \R^d\to\R^d$ such that for $\pi$-a.e.\ $(x,y)\in\R^d\times \R^d$, it holds that $y = T(x)$. The map $T$ is called the \emph{optimal transport map} from $\mu$ to $\nu$.

We refer readers to~\cite{villani2003topics, santambrogio2015ot} for an introduction to optimal transport, and to~\cite{docarmo1992riemannian} and~\cite[Appendix A.1]{chewietal2020buresgd} for background on Riemannian geometry.
The Riemannian structure of optimal transport was introduced in the seminal work~\cite{otto2001porousmedium}; detailed treatments can be found in~\cite{ambrosio2008gradient, villani2009ot}, see also~\cite[Appendix A.2]{chewietal2020buresgd} and Appendix~\ref{scn:bw_background} for a quick overview.

In this paper, we mainly work with centered Gaussians, which can be identified with their covariance matrices. (Extensions to the non-centered case are also discussed in the next sections.) We abuse notation via this identification: given $\Sigma,\Sigma' \in \psd$, we write $W_2(\Sigma,\Sigma')$ for the $2$-Wasserstein distance between centered Gaussians with covariance matrices $\Sigma$, $\Sigma'$ respectively. Throughout, all Gaussians of interest are non-degenerate; that is, their covariances are non-singular.

The Wasserstein distance has a closed-form expression for Gaussians:
\begin{align}\label{eq:w2_formula}
    W_2^2(\Sigma,\Sigma')
    &= \tr\bigl[\Sigma + \Sigma' - 2 \, {(\Sigma^{1/2} \Sigma' \Sigma^{1/2})}^{1/2}\bigr]\,.
\end{align}
Also, the optimal transport map from $\Sigma$ to $\Sigma'$ is the symmetric matrix
\begin{align}\label{eq:gaussian_ot_map}
    T_{\Sigma\to\Sigma'}
    &= \Sigma^{-1/2} \, {(\Sigma^{1/2} \Sigma' \Sigma^{1/2})}^{1/2} \, \Sigma^{-1/2}
    = \GM(\Sigma^{-1}, \Sigma').
\end{align}
Above, $\GM(A,B)  \deq  A^{1/2} \, {(A^{-1/2} B A^{-1/2})}^{1/2} A^{1/2}$ denotes the matrix geometric mean between two positive semidefinite matrices~\cite[Ch.\ 4]{bhatia2009positive}.
The Wasserstein distance on $\psd$ in fact arises from a Riemannian metric, which was first introduced by Bures in~\cite{bures1969}. Hence, the Riemannian manifold $\psd$ endowed with this Wasserstein distance is referred to as the \emph{Bures--Wasserstein space}. The geometry of this space is studied in detail in~\cite{modin2017matrixdecomposition, bhatiajainlim2019bures}. For completeness, we provide additional background on the Bures--Wasserstein manifold in Appendix~\ref{scn:bw_background}.

\section{Barycenters}\label{scn:bary}

In this section, we consider the Bures--Wasserstein barycenter
\begin{align*}
    \Sigstar \in \argmin_{\Sigma \in \psd} \int W_2^2(\Sigma, \cdot) \, \D P\,.
\end{align*}
We refer to the introduction for a discussion of the past work on the Bures--Wasserstein barycenter. We also remark that the case when $P$ is supported on possibly non-centered Gaussians is easily reduced to the centered case; see the discussion in~\cite[\S 4]{chewietal2020buresgd}.

\subsection{Algorithms}

We consider both Riemannian gradient descent (GD) and Riemannian stochastic gradient descent (SGD) algorithms for computing the Bures--Wasserstein barycenter, which are given as Algorithm~\ref{ALG:GD} and Algorithm~\ref{ALG:SGD} respectively. GD is useful for computing high-precision solutions due to its linear convergence (Theorem~\ref{thm:bures_gd}), and SGD is useful for large-scale or online settings because of its cheaper updates. We refer to~\cite{zemel2019procrustes, chewietal2020buresgd} for the derivation of the updates. Here, $\Sigma_0$ is the initialization, which can be taken to be any matrix in the support of $P$. For SGD, we also require a sequence ${(\eta_t)}_{t=1}^T$ of step sizes and a sequence ${(K_t)}_{t=1}^T$ of i.i.d.\ samples from $P$. 

\noindent
\begin{minipage}{0.45\textwidth}
\centering
\begin{algorithm}[H]
  \caption{GD for Barycenters}\label{ALG:GD}
  \begin{algorithmic}[1]
    \Procedure{Bary-GD}{$\Sigma_0, \eta, P, T$}
      \For{$t = 1, \ldots, T$}
        \State $
        S_t \gets (1-\eta)\,I_d + \eta \int \GM(\Sigma_{t-1}^{-1},\Sigma) \, \D P(\Sigma)
        $
        \State $ \Sigma_{t} \gets S_t\Sigma_{t - 1} S_t$
      \EndFor
      \State \Return $\Sigma_{T}$
    \EndProcedure
  \end{algorithmic}
\end{algorithm}
\end{minipage}
\hfill
\begin{minipage}{0.5\textwidth}
\centering
\begin{algorithm}[H]
  \caption{SGD for Barycenters}\label{ALG:SGD}
  \begin{algorithmic}[1]
    \Procedure{Bary-SGD}{$\Sigma_0, {(\eta_t)}_{t = 1}^T, {(K_t)}_{t = 1}^T$}
      \For{$t = 1, \ldots, T$}
        \State $
        \hat{S}_t \gets (1-\eta_t)\, I_d + \eta_t \GM(\Sigma_{t-1}^{-1}, K_t)
        $
        \State $ \Sigma_{t} \gets \hat{S}_t \Sigma_{t - 1} \hat{S}_t$
      \EndFor
      \State \Return $\Sigma_{T}$
    \EndProcedure
  \end{algorithmic}
\end{algorithm}
\end{minipage}

\medskip{}

Note that whereas SGD requires choosing step sizes, for GD we can simply use step size $1$ in practice, as justified in~\cite{zemel2019procrustes}. However, for our theoretical results, we will require choosing a step size $\eta < 1$ for GD as well.

\subsection{Convergence guarantees}

Denote the barycenter functional by $F(\Sigma)  \deq  \frac{1}{2} \int W_2^2(\Sigma,\cdot) \, \D P$, and denote the \emph{variance} of $P$ by $\var P  \deq  2F(\Sigstar)$. We assume that $P$ is supported on matrices whose eigenvalues lie in the range $[\lambda_{\min},\lambda_{\max}]$, and we let $\kappa  \deq  \lambda_{\max}/\lambda_{\min}$ denote the condition number. Whereas the previous state-of-the-art convergence analysis for Algorithms~\ref{ALG:GD} and~\ref{ALG:SGD} in~\cite{chewietal2020buresgd} suffered a dependence of $\kappa^d$, we show that the rates of convergence are in fact independent of the dimension $d$.

\begin{thm}\label{thm:bures_gd}
    Assume that $P$ is supported on covariance matrices whose eigenvalues lie in the range $[\lambda_{\min}, \lambda_{\max}]$, $0 < \lambda_{\min} \le \lambda_{\max} < \infty$. Let $\kappa  \deq  \lambda_{\max}/\lambda_{\min}$ denote the condition number. Assume that we initialize at $\Sigma_0 \in \supp P$.
    \begin{enumerate}
        \item (GD) Let $\Sigma_T^{\rm GD}$ denote the $T$-th iterate of GD (Algorithm~\ref{ALG:GD}) with step size $\eta = \frac{1}{2\kappa}$. Then,
        \begin{align*}
            \frac{1}{2\sqrt\kappa} \, W_2^2(\Sigma_T^{\rm GD}, \Sigstar)
            \le F(\Sigma_T^{\rm GD}) - F(\Sigstar)
            &\le \exp\Bigl(- \frac{3T}{64\kappa^{5/2}}\Bigr) \, \{F(\Sigma_0) - F(\Sigstar)\}\,.
        \end{align*}
        \item (SGD) Let $\Sigma_T^{\rm SGD}$ denote the $T$-th iterate of SGD (Algorithm~\ref{ALG:SGD}).
        Then, with appropriately chosen step sizes,
        \begin{align*}
            \frac{1}{2\sqrt \kappa} \E W_2^2(\Sigma_T^{\rm SGD}, \Sigstar)
            \le \E F(\Sigma_T^{\rm SGD}) - F(\Sigstar)
            &\le \frac{48\kappa^3 \var P}{T}\,.
        \end{align*}
    \end{enumerate}
\end{thm}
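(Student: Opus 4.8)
\textbf{Proof proposal for Theorem~\ref{thm:bures_gd}.}

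The plan is to establish the two key ingredients that make a gradient-descent analysis go through in the non-convex setting — a smoothness (upper quadratic) bound and a gradient-domination (Polyak--\L ojasiewicz-type) lower bound for the functional $F$ — and then to control the conditioning of the iterates so that these two inequalities can be applied uniformly along the trajectory. First I would record the Riemannian gradient of $F$: by~\eqref{eq:gaussian_ot_map}, $\nabla F(\Sigma) = I - S_\Sigma$ where $S_\Sigma := \int \GM(\Sigma^{-1},\Sigma')\,\D P(\Sigma')$, so the GD update $\Sigma_t = S_t \Sigma_{t-1} S_t$ is exactly the exponential map $\exp_{\Sigma_{t-1}}(-\nabla F(\Sigma_{t-1}))$ in the Bures--Wasserstein geometry (step size one). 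The known fixed-point structure then identifies $\Sigstar$ as the unique zero of $\nabla F$. The crucial new estimates are: (i) a one-step descent bound $F(\Sigma_t) - F(\Sigstar) \le (1 - c_\kappa)\,\{F(\Sigma_{t-1}) - F(\Sigstar)\}$ with $c_\kappa = \Theta(\kappa^{-3/2})$; (ii) the comparison $W_2^2(\Sigma,\Sigstar) \le 2\sqrt\kappa\,\{F(\Sigma)-F(\Sigstar)\}$ which gives the left-hand inequality. For (ii) I would use the representation $F(\Sigma) - F(\Sigstar) = \frac12\int \langle \text{something involving transport maps}\rangle$ and the non-negative-curvature comparison inequalities for $W_2^2$ on the Bures--Wasserstein manifold; the factor $\sqrt\kappa$ enters through the operator-norm bounds on $\Sigstar^{1/2}$ and $\Sigstar^{-1/2}$, which are controlled once we know $\Sigstar$ has eigenvalues in $[\lambda_{\min},\lambda_{\max}]$.

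The heart of the argument, and the place I expect the most work, is proving that all GD iterates $\Sigma_t$ stay in the ``sandwich'' $\lambda_{\min} I \preceq \Sigma_t \preceq \lambda_{\max} I$ (hence condition number at most $\kappa$ throughout), so that the local smoothness/PL constants — which I expect to scale like powers of the iterate's condition number — can be replaced by the uniform value $\kappa$. This is exactly where Theorem~\ref{thm:sqrt_lambda_min_concave} is used: the map $\Sigma \mapsto \GM(\Sigma^{-1},\Sigma')$ is, for fixed $\Sigma'$ in the support of $P$, the optimal transport map, and I would argue that $\Sigma \mapsto \Sigma_t = S_t \Sigma_{t-1} S_t$ amounts to moving $\Sigma_{t-1}$ along the generalized geodesic toward a convex combination (in the appropriate sense) of the $\Sigma'$'s; convexity of $-\sqrt{\lambda_{\min}}$ and $\sqrt{\lambda_{\max}}$ along generalized geodesics then forces $\sqrt{\lambda_{\min}(\Sigma_t)} \ge \lambda_{\min}^{1/2}$ and $\sqrt{\lambda_{\max}(\Sigma_t)} \le \lambda_{\max}^{1/2}$, giving the invariant. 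The initialization $\Sigma_0 \in \supp P$ has eigenvalues in $[\lambda_{\min},\lambda_{\max}]$, so the induction is seeded correctly.

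With the conditioning invariant in hand, the GD bound follows by chaining the one-step contraction (i) over $t = 1,\dots,T$, and then $\exp(-T/(4\kappa^{3/2}))$ comes from $(1 - c_\kappa)^T \le \exp(-c_\kappa T)$ with $c_\kappa = 1/(4\kappa^{3/2})$; I would be careful to track the exact constant through the smoothness and PL estimates. For the SGD part, the same conditioning invariant must be verified for the stochastic update $\Sigma_t = \hat S_t \Sigma_{t-1}\hat S_t$ with $\hat S_t = (1-\eta_t) I + \eta_t\,\GM(\Sigma_{t-1}^{-1},K_t)$ — this is again a generalized-geodesic move (now toward a single sample $K_t$), so Theorem~\ref{thm:sqrt_lambda_min_concave} applies for any $\eta_t \in [0,1]$. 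Then I would run the standard SGD-under-PL recursion: take expectations of the one-step inequality, use that the stochastic gradient is unbiased for $\nabla F$ and has second moment controlled by $\var P$ (times a $\kappa$-dependent factor from the smoothness of $W_2^2$), and pick the classical decreasing schedule $\eta_t \asymp 1/(\mu t)$ where $\mu \asymp \kappa^{-3/2}$ is the PL constant; unrolling gives the $O(\kappa^3\,\var P/T)$ rate, with the extra power of $\kappa$ relative to the GD exponent coming from the ratio of the smoothness constant to $\mu^2$. The left-hand inequality in the SGD bound is the same $W_2^2 \le 2\sqrt\kappa\,\{F - F(\Sigstar)\}$ comparison, now in expectation. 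The main obstacle, to reiterate, is the clean reduction of the GD/SGD updates to generalized-geodesic motion so that Theorem~\ref{thm:sqrt_lambda_min_concave} can be invoked to pin down the iterates' spectra; everything else is then a careful but routine bookkeeping of constants in the standard PL framework.
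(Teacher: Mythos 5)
Your proposal follows essentially the same route as the paper: invoke Theorem~\ref{thm:sqrt_lambda_min_concave} to show the GD/SGD iterates remain in the eigenvalue sandwich $[\lambda_{\min},\lambda_{\max}]$ (since each update is a generalized-geodesic move toward the support of $P$), then combine $1$-geodesic smoothness, a PL inequality, and a variance inequality to run the standard (S)GD-under-PL machinery. The one piece you leave implicit is where the exponent $3/2$ (rather than the $2$ of the prior work) comes from: the paper sharpens the eigenvalue bounds on transport maps from $[\kappa^{-1},\kappa]$ to $[\kappa^{-1/2},\kappa^{1/2}]$ via a norm inequality for matrix geometric means (Lemma~\ref{lem:transport_map_reg}), which is what improves the PL constant from $2\kappa^2$ to $2\kappa^{3/2}$ and the variance constant from $\tfrac{1}{2\kappa}$ to $\tfrac{1}{2\sqrt\kappa}$ — your appeal to operator-norm bounds on $\Sigma^{\star\,\pm 1/2}$ alone would not recover the sharp exponents.
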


In fact, using Theorem~\ref{thm:sqrt_lambda_min_concave} we can also relax the conditioning assumption
for gradient descent
to an \emph{average-case} notion of conditioning.
This is a significant improvement when the eigenvalue ranges differ significantly between matrices. 

\begin{thm}\label{thm:bures_avg_case}
    Define the quantities
    \begin{align*}
        \norm{\lambda_{\min}}_{1/2}
        &\deq \Bigl( \int\sqrt{\lambda_{\min}(\Sigma)} \, \D P(\Sigma)\Bigr)^2\,, \\
        \norm{\lambda_{\max}}_1
        &\deq \int \lambda_{\max}(\Sigma) \, \D P(\Sigma)\,.
    \end{align*}
    Then, the conclusions of Theorem~\ref{thm:bures_gd} hold for GD when replacing $\kappa$ with $\norm{\lambda_{\max}}_1/\norm{\lambda_{\min}}_{1/2}$ everywhere\@.\footnote{A previous version of this paper
    stated that the conclusion
    of Theorem~\ref{thm:bures_gd} for SGD also held with averaged eigenvalues, but our proof does not show this.}
\end{thm}

We give the proofs of this result in Appendix~\ref{scn:bures_gd}.

\subsection{Numerical experiments}\label{scn:bures_gd_numerics}

There are two natural competitors of Riemannian GD when minimizing the barycenter functional: $(i)$ solving an SDP (Appendix~\ref{scn:sdp}), and $(ii)$ Euclidean GD (see Appendix~\ref{scn:euclidean_gd} for a description and analysis of the Euclidean gradient descent algorithm).

\begin{figure}[!h]
    \centering
    \begin{minipage}{0.48\textwidth}
        \centering
        \includegraphics[width=\textwidth]{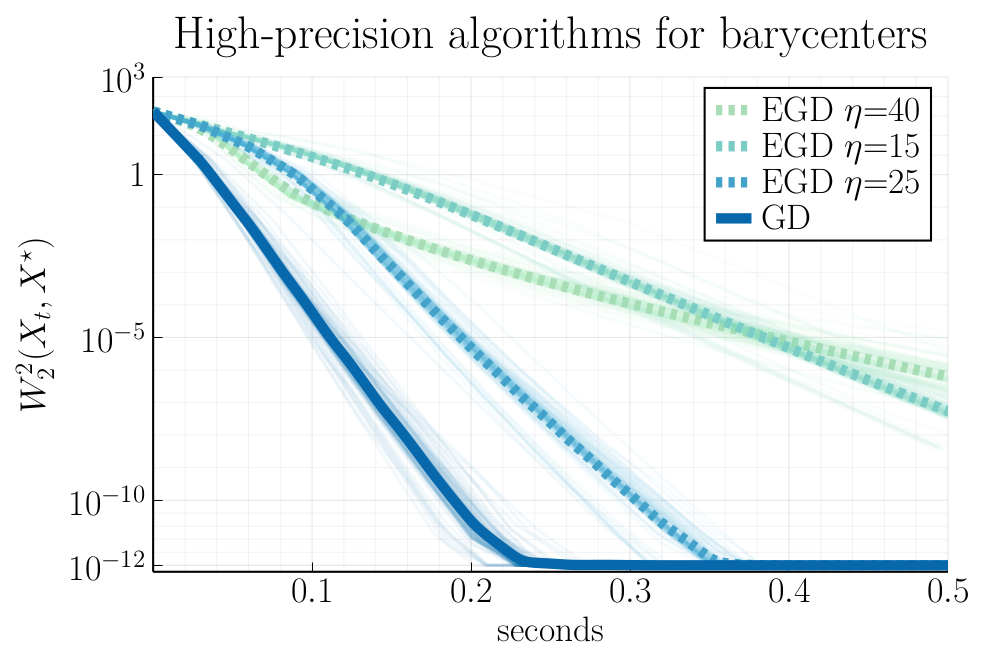}
\caption{
    Riemannian vs.\ Euclidean GD.
    }
        \label{fig:high-precision barycenter}
    \end{minipage}
    \hfill
    \begin{minipage}{0.5\textwidth}
        \centering
        \includegraphics[width=\textwidth]{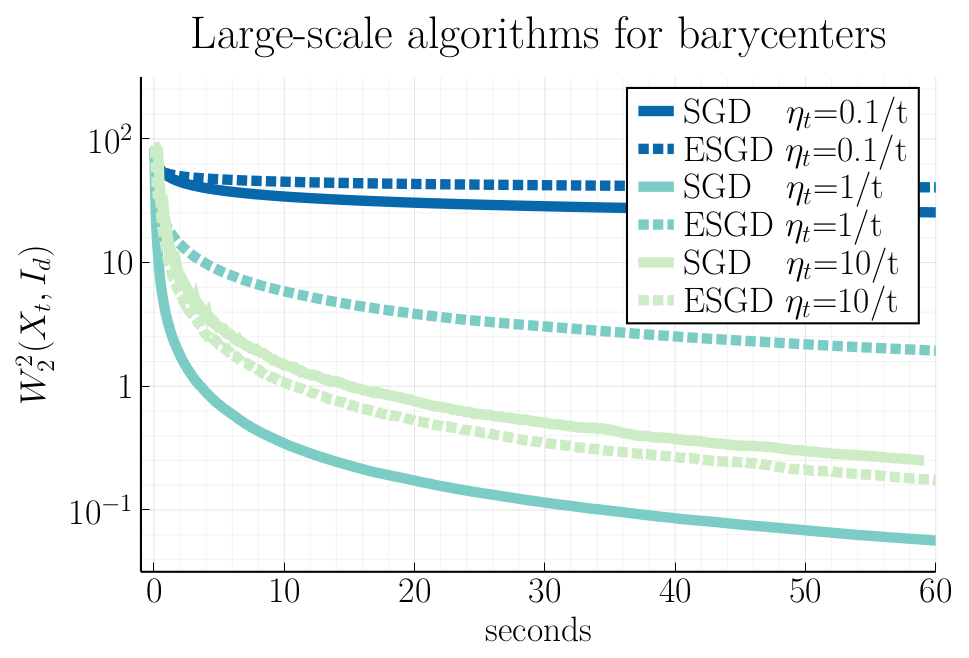}
         \caption{Riemannian vs.\ Euclidean SGD.
         }
         \label{fig:sgd vs esgd}
    \end{minipage}
\end{figure}

In Figure \ref{fig:high-precision barycenter} we compare Riemannian and Euclidean GD on a random dataset consisting of $n=50$ covariance matrices of dimension $d=50$, each with condition number $\kappa = 1000$. The eigenspaces of the matrices are independent Haar distributed, and their eigenvalues are equally spaced in the interval $[\lambda_{\min}, \lambda_{\max}] = [0.03, 30]$. Qualitatively similar results are observed for other input distributions; see Appendix~\ref{scn:experiment_details}. We run $50$ experiments and plot the average accuracy cut off at $10^{-12}$; $X^{\star}$ denotes the best iterate. We omit SDP solvers from the plot because their runtime is orders of magnitude slower for this problem: using the Splitting Cone Solver (SCS) \cite{ocpb:16, scs}, the problem takes ${\sim}15$ seconds to solve, and MOSEK \cite{mosek} is even slower. We observe that Euclidean GD's rate of convergence is very sensitive to its step size, which depends heavily on the conditioning of the problem. Riemannian GD was the clear winner in our experiments, as its step size requires no tuning (following~\cite{zemel2019procrustes}, we simply use step size $\eta = 1$ throughout) and it always performed no worse (in fact, often significantly better) than Euclidean GD. 

In Figure \ref{fig:sgd vs esgd} we compare Riemannian and Euclidean SGD. We average $300 \times 300$ covariance matrices drawn from a distribution whose barycenter is known to be the identity, see Appendix~\ref{scn:experiment_details} for details.
We observe that Riemannian SGD typically outperforms Euclidean SGD, sometimes substantially.

\par We comment on Figure \ref{fig:gd and sgd vs d}, which illustrates the dimension independence of the two Riemannian algorithms, a main result of this paper. It plots the number of passes until convergence $W_2^2(X_t, X^\star) \leq 10^{-r} \var P$ to the barycenter $X^{\star}$, for $r \in \{3, 5\}$. To compare the algorithms on equal footing, the $y$-axis measures ``full passes'' over the $n=50$ matrices: one pass constitutes one iteration of GD, or $n$ iterations of SGD. We generate the input dataset just as in Figure~\ref{fig:high-precision barycenter}. Observe also the tradeoff between GD and SGD: SGD converges rapidly to low-precision solutions, but takes longer to converge to high-precision solutions.

\section{Entropically-regularized barycenters}\label{scn:reg_bary}

In this section, we consider the entropically-regularized barycenter $b_{\rm reg}^\star$ which minimizes 
\begin{align*}
    F_\gamma(b)
    & \deq  \frac{1}{2} \int W_2^2(b, \cdot) \, \D P + \gamma \on{KL}\bigl(b \bigm\Vert \mc N(0, I_d)\bigr)\,,
\end{align*}
where $\KL{\cdot}{\cdot}$ denotes the Kullback-Leibler (KL) divergence, and $\gamma > 0$ is a given regularization parameter. It suffices to consider the case when all of the measures are centered, see Remark~\ref{rmk:noncentered_entropically_reg}. To justify considering this problem on the Bures--Wasserstein space, we provide the following proposition, proven in Appendix~\ref{subsec:ent_unique}.

\begin{prop}\label{prop:ent_unique}
Suppose $P$ is supported on centered Gaussians whose covariance matrices have eigenvalues lying in the range $[1/\sqrt\kappa, \sqrt\kappa]$, for some $\kappa \ge 1$. Then there exists a unique minimizer $b^\star_{\rm reg}$ of $F_{\gamma}$ over $\mc P_2(\R^d)$, and this minimizer is a centered Gaussian distribution whose covariance matrix $\Sigma^\star$ also has eigenvalues in the range $[1/\sqrt\kappa, \sqrt\kappa]$.
\end{prop}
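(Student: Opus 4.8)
The plan is to establish existence and uniqueness of the minimizer via the direct method of the calculus of variations, and then to argue that the minimizer must be a centered Gaussian with the claimed eigenvalue bounds by exploiting symmetry and a comparison/monotonicity argument. First I would record that $F_\gamma$ is bounded below: the Wasserstein term is nonnegative, and by restricting the support assumption on $P$ one may test $F_\gamma$ at $\mc N(0, I_d)$ to get a finite upper bound on the infimum. Since $\KL{\cdot}{\mc N(0,I_d)}$ has compact sublevel sets for the weak topology (it is weakly lower semicontinuous with superlinear growth, so sublevel sets are tight) and $b \mapsto \int W_2^2(b,\cdot)\,\D P$ is weakly lower semicontinuous, a minimizing sequence along a fixed sublevel set admits a weakly convergent subsequence, and lower semicontinuity of both terms gives existence of a minimizer $b_{\rm reg}^\star$.

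For uniqueness, I would exploit geodesic convexity in the right sense: although $W_2^2(\cdot, \nu)$ is not geodesically convex along $W_2$-geodesics in general, $b \mapsto W_2^2(b,\nu)$ \emph{is} convex along the usual $L^2$-interpolation of maps (generalized geodesics) with base $\nu$, and $\KL{\cdot}{\mc N(0,I_d)}$ is strictly displacement convex (indeed $1$-strongly convex along $W_2$-geodesics, and more to the point strictly convex along generalized geodesics, since the negative entropy and the quadratic potential are both strictly convex along such curves). Taking generalized geodesics based at a fixed measure (for instance at one of the Gaussians in $\supp P$, or at $b_{\rm reg}^\star$ itself), $F_\gamma$ is strictly convex along them, which forces the minimizer to be unique.

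To see that $b_{\rm reg}^\star$ is a centered Gaussian, I would invoke a symmetrization argument: if $b$ is any measure and $b'$ its pushforward under $x \mapsto -x$, then $W_2^2(b', (-{\rm id})_\# \mu) = W_2^2(b,\mu)$; since $\supp P$ consists of centered Gaussians, which are invariant under $x\mapsto -x$, and $\mc N(0,I_d)$ is likewise invariant, one gets $F_\gamma(b') = F_\gamma(b)$, so by uniqueness $b_{\rm reg}^\star = b'$, i.e., $b_{\rm reg}^\star$ is centrally symmetric, hence centered. To upgrade to Gaussianity, the cleanest route is to write the first-order optimality (Euler--Lagrange) condition: at the optimum, the first variation of $\frac12\int W_2^2(\cdot,\cdot)\,\D P$ is $\int (\mathrm{id} - T_{b^\star \to \Sigma})\,\D P$ (the barycentric-map vector field) and that of $\gamma\KL{\cdot}{\mc N(0,I_d)}$ is $\gamma\nabla \log(b^\star \cdot \exp(\lVert\cdot\rVert^2/2)) $; setting the sum to zero and using that each $T_{b^\star\to\Sigma}$ is affine when $b^\star$ and $\Sigma$ are Gaussian yields a fixed-point equation on covariance matrices. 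Restricting the minimization a priori to centered Gaussians — justified because the Gaussian with covariance equal to the barycentric ``covariance'' of $b_{\rm reg}^\star$ can only decrease $F_\gamma$, a standard Gaussian-optimality fact for functionals built from $W_2^2$ and relative entropy against a Gaussian — lets one replace the infinite-dimensional problem with the finite-dimensional one $\min_{\Sigma \succ 0} \frac12\int W_2^2(\Sigma,\cdot)\,\D P + \gamma\,\KL{\mc N(0,\Sigma)}{\mc N(0,I_d)}$, whose minimizer exists and is unique by the same convexity argument in the Bures--Wasserstein geometry (the KL term is $\frac\gamma2(\tr\Sigma - \log\det\Sigma - d)$, strictly geodesically convex along generalized geodesics).

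Finally, for the eigenvalue bounds on $\Sigma^\star$, I would use the optimality condition $\int \GM(\Sigma^{\star-1},\Sigma)\,\D P(\Sigma) = I_d + \gamma\,(\text{something from the KL gradient})$, more precisely the stationarity equation in the form that $\Sigma^\star$ equals a weighted average of $\GM(\Sigma^\star, \cdot)$-type terms plus a contraction toward $I_d$ coming from the regularizer. Since all $\Sigma \in \supp P$ have eigenvalues in $[1/\sqrt\kappa,\sqrt\kappa]$ and $\mc N(0,I_d)$ trivially does too, the set $\{\Sigma \succ 0 : \lambda_{\min}(\Sigma) \ge 1/\sqrt\kappa,\ \lambda_{\max}(\Sigma)\le \sqrt\kappa\}$ is geodesically convex (by Theorem~\ref{thm:sqrt_lambda_min_concave}, since it is a sublevel set intersection of the generalized-geodesically convex functions $-\sqrt{\lambda_{\min}}$ and $\sqrt{\lambda_{\max}}$), and I would argue that projecting $\Sigma^\star$ onto this set — or rather noting that a minimizer of the finite-dimensional problem restricted to this convex set is a critical point of the unrestricted problem — cannot increase $F_\gamma$; combined with uniqueness this forces $\Sigma^\star$ to lie in the set. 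Concretely: the restriction of $F_\gamma$ to the geodesically convex, compact set above attains its minimum at some $\Sigma_0$; if $\Sigma_0$ were on the boundary the gradient would have to point outward, but one checks using the explicit gradient (barycentric map pulling toward matrices with eigenvalues in $[1/\sqrt\kappa,\sqrt\kappa]$, plus the KL gradient pulling toward $I_d$) that the gradient always points into the set on the boundary, hence $\Sigma_0$ is interior, hence a global critical point, hence $=\Sigma^\star$ by uniqueness.

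The main obstacle I anticipate is the rigorous reduction from the infinite-dimensional variational problem over $\mc P_2(\R^d)$ to the finite-dimensional Gaussian problem: one must justify that the global minimizer is Gaussian, not merely that the best Gaussian is a critical point. The symmetrization argument handles centering for free, but Gaussianity genuinely needs either (a) a Gaussian-optimality lemma for functionals of the form $W_2^2$-barycenter-plus-relative-entropy-against-Gaussian — provable by a second-moment/Gaussianization argument showing both terms decrease under replacing $b$ by the Gaussian with the same (appropriately defined) covariance — or (b) directly analyzing the Euler--Lagrange PDE and showing its solution must be log-quadratic. Approach (a) is cleaner and is what I would pursue; the eigenvalue bound then follows from the barrier/invariant-set argument above, which is where Theorem~\ref{thm:sqrt_lambda_min_concave} does its work.
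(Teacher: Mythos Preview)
Your uniqueness argument contains a genuine gap. You correctly observe that $b\mapsto W_2^2(b,\nu)$ is convex along generalized geodesics \emph{based at $\nu$}, but then assert that $F_\gamma$ is strictly convex along generalized geodesics based at a \emph{single fixed} measure. These do not match: the integrand $W_2^2(\cdot,\nu)$ requires the base to be $\nu$ itself, and since $P$ is supported on more than one point you cannot choose one base that makes every term convex. Along generalized geodesics based at a fixed $\omega\ne\nu$, the function $W_2^2(\cdot,\nu)$ is not convex in general (this is precisely the manifestation of the non-negative curvature of Wasserstein space that the paper emphasizes). The strict convexity of the KL term along such curves does not rescue you, because you have no lower bound on the second variation of the barycenter part along those curves. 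Consequently your symmetrization argument for centering, which invokes uniqueness, is also unsupported as written.

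The paper avoids this issue by never appealing to geodesic convexity of $F_\gamma$ over $\mc P_2(\R^d)$. Instead it works on $\psd$ first: the gradient-descent map $\Sigma\mapsto\exp_\Sigma(-\tfrac{1}{1+\gamma}\nabla F_\gamma(\Sigma))$ sends $\mc K_{1/\sqrt\kappa,\sqrt\kappa}$ into itself (this uses Theorem~\ref{thm:sqrt_lambda_min_concave} together with the identity $\nabla\KL{\cdot}{I_d}(\Sigma)=-\log_\Sigma(\Sigma^{-1})$, so that the KL gradient is itself a logarithm pointing toward a matrix in $\mc K$), and Brouwer's fixed-point theorem produces a critical point $\Sigma^\star\in\mc K_{1/\sqrt\kappa,\sqrt\kappa}$. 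Uniqueness on $\psd$ then comes for free from \emph{Euclidean} strict convexity of $F_\gamma$ (the barycenter functional is Euclidean convex, and $-\ln\det$ is strictly so). Only then does the paper pass to $\mc P_2(\R^d)$ via the Gaussianization step you also describe: for any $b$ with covariance $\Sigma$, $F_\gamma(b)\ge F_\gamma(\bar b)\ge F_\gamma(\mc N(0,\Sigma))\ge F_\gamma(\mc N(0,\Sigma^\star))$, using the mean decomposition, the Gaussian lower bound on $W_2$, and entropy maximization. Your proposal can be repaired by reversing the order---Gaussianize first (this needs no uniqueness), then argue uniqueness on the finite-dimensional space---but the Brouwer argument is what simultaneously delivers existence, the eigenvalue bounds, and a critical point, after which Euclidean convexity closes the loop without any boundary analysis.
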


See the introduction for a discussion of the literature on this problem. Prior work focuses on a slightly different entropic penalty, the differential entropy $\int b \ln b$. Note that such a penalty encourages $b$ to be diffuse over all of $\R^d$ (the minimizer blows up as $\gamma \to \infty$). Here, we focus on a KL divergence penalty which has the advantage of interpolating between two well-studied problems: the Wasserstein barycenter problem ($\gamma=0$) and minimization of the KL divergence ($\gamma = \infty$). We take the standard Gaussian distribution as a canonical choice of reference distribution, and note that
our method of analysis can be extended to other
reference measures at the cost of significant
additional technical complexity. We thus choose to exclusively
focus on the standard Gaussian case.

\subsection{Algorithm}

The Riemannian gradient descent algorithm for minimizing $F_\gamma$ is given as Algorithm~\ref{ALG:RGD}. See Appendix~\ref{sec:ent} for a derivation of the update rule.

\begin{algorithm}[H]
  \caption{GD for Regularized Barycenters}\label{ALG:RGD}
  \begin{algorithmic}[1]
    \Procedure{RBary-GD}{$\Sigma_0, P, T, \gamma, \eta$}
      \For{$t = 1, \ldots, T$}
         \State $
        S_t \gets \eta \int \GM(\Sigma_{t-1}^{-1}, \Sigma) \, \D P(\Sigma) + \eta \gamma \Sigma_{t - 1}^{-1}
         + (1 - \eta \,(1 + \gamma))I_d$
        \State $ \Sigma_{t} \gets S_t\Sigma_{t - 1} S_t$
      \EndFor
      \State \Return $\Sigma_{T}$
    \EndProcedure
  \end{algorithmic}
\end{algorithm}

\subsection{Convergence guarantees}

We provide a convergence guarantee for Algorithm~\ref{ALG:RGD}.
We emphasize that as in \S\ref{scn:bary}, our convergence rate is dimension-independent. The proof
appears
in Appendix~\ref{sec:ent}. 

\begin{thm}\label{thm:ent_main} Fix $\gamma > 0$ and suppose that
$P$ is supported on covariance matrices with eigenvalues
in $[1/\sqrt \kappa, \sqrt \kappa]$. If Algorithm~\ref{ALG:RGD} is initialized at a point in $\supp P$
and run with step size $\eta = \frac{2}{{(2+\gamma)}^4\,\kappa}$, then for any $T\ge 1$,
    \begin{align*}
    F_\gamma(\Sigma_T) - F_\gamma(\Sigma^\star)
    &\le \exp\bigl( - \frac{4T}{(2+\gamma)^{20}\,\kappa^5} \bigr) \, \{F_\gamma(\Sigma_0) - F_\gamma(\Sigma^\star)\}\,.
    \end{align*}
\end{thm}

Similarly to Theorem~\ref{thm:bures_gd}, we can also provide guarantees in terms of the distance $W_2(\Sigma_T, \Sigma^\star)$ to the minimizer, as well as guarantees for SGD, but we omit this for brevity.

\subsection{Numerical experiments}

\begin{minipage}{0.48\textwidth}
 In Figure~\ref{fig:effect of regularization}, we investigate the use of the regularization term $\gamma \on{KL}(\cdot \bigm\Vert \mc N(0, I_d))$ to encode a prior belief of isotropy. In Figure \ref{fig:effect of regularization}, we generate $n=100$ i.i.d.\ $20 \times 20$ covariance matrices from a distribution whose barycenter is the identity (see Appendix \ref{scn:experiment_details}). Then, for $\rho \in [0,10]$ we compute the barycenter of a perturbed dataset obtained by adding $\rho e_1 e_1^\T$ to each matrix for different choices of $\gamma$. We see that for $\gamma=0$ the barycenter quickly departs from isotropy, while for larger $\gamma$ the regularization yields averages which are more consistent with our prior belief.
 
 \end{minipage}
 \hfill
 \begin{minipage}{0.48\textwidth}
\includegraphics[width=\textwidth]{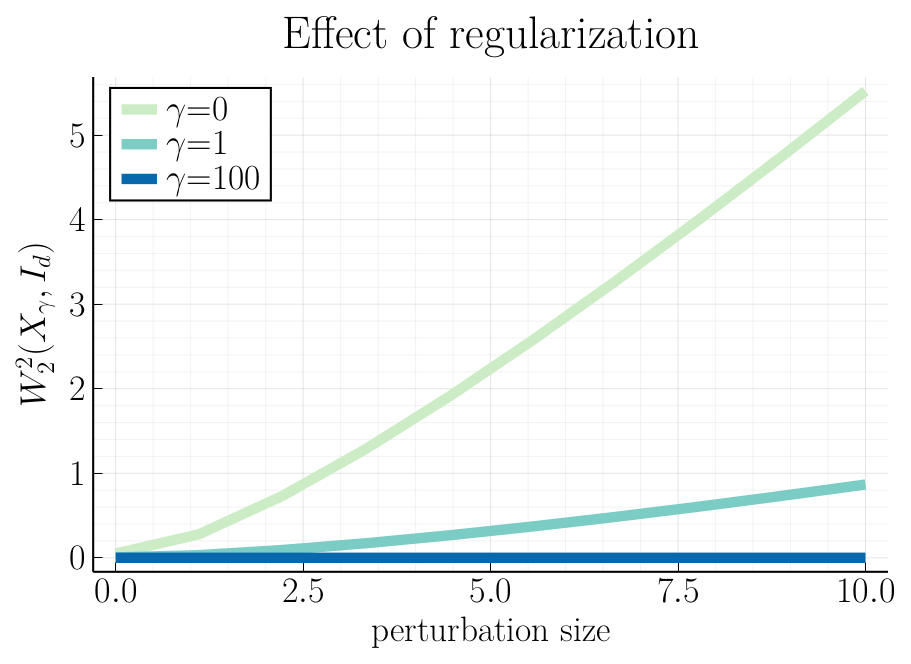}
\captionof{figure}{Effect of regularization for varying $\gamma$. }
        \label{fig:effect of regularization}
\end{minipage}

\section{Geometric medians}\label{scn:median}

In this section, we consider the Wasserstein geometric median
\begin{align}\label{eq:geom_median}
    \bstar_{\rm median} \in \argmin_{b \in \mc P_2(\R^d)} \int W_2(b, \cdot) \, \D P \,.
\end{align}
See the introduction for a discussion of the literature on this problem.
Observe that, in contrast to the barycenter~\eqref{eq:w2_bary}, here we are minimizing the average \emph{unsquared} Wasserstein distance.

The following basic result justifies the consideration of the geometric median problem on the Bures--Wasserstein space.

\begin{prop}\label{prop:basic_prop_median}
    Suppose that $P$ is supported on centered non-degenerate Gaussians whose covariance matrices have eigenvalues upper bounded by $\lambda_{\max}$.
    Then, there exists a solution to~\eqref{eq:geom_median} which is also a centered non-degenerate Gaussian distribution; moreover, its covariance matrix $\Sigstar_{\rm median}$ can be taken to have eigenvalues upper bounded by $\lambda_{\max}$.
\end{prop}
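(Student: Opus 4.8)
The plan is to reduce, in two steps, to a finite-dimensional minimization over a compact set of covariance matrices, and then extract the solution.

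\textbf{Step 1: reduction to centered Gaussians.} For any $b \in \mc P_2(\R^d)$ with mean $m$ and covariance $\Sigma_b$, the classical Gelbrich lower bound gives, for every $\Sigma \succeq 0$,
\[
  W_2^2\bigl(b, \mc N(0,\Sigma)\bigr) \;\ge\; \norm{m}^2 + W_2^2\bigl(\mc N(0,\Sigma_b), \mc N(0,\Sigma)\bigr) \;\ge\; W_2^2\bigl(\mc N(0,\Sigma_b), \mc N(0,\Sigma)\bigr),
\]
since the covariance term equals the squared Bures--Wasserstein distance and the bound is tight for Gaussians. Taking square roots and integrating against $P$ shows that replacing $b$ by $\mc N(0,\Sigma_b)$ never increases the objective in~\eqref{eq:geom_median}. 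Hence it suffices to minimize $G(\Sigma) := \int W_2(\Sigma,\cdot)\,\D P$ over the closed cone $\overline{\psd}$; a centered Gaussian minimizer of $G$ is then a global solution of~\eqref{eq:geom_median}.

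\textbf{Step 2: reduction to a compact spectral box.} Let $\mc K := \{\Sigma : \lambda_{\min} I \preceq \Sigma \preceq \lambda_{\max} I\}$, which contains $\supp P$. I claim the spectral clipping map, sending $\Sigma = \sum_i \sigma_i v_i v_i^\T$ to $\Sigma^{\flat} := \sum_i \clip(\sigma_i, [\lambda_{\min},\lambda_{\max}])\,v_i v_i^\T$, satisfies $W_2(\Sigma^{\flat}, A) \le W_2(\Sigma, A)$ for every $A \in \mc K$; integrating against $P$ then gives $G(\Sigma^{\flat}) \le G(\Sigma)$, so $\inf_{\overline{\psd}} G = \inf_{\mc K} G$. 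To prove the claim, observe that the eigenspaces altered by the ``cap'' (eigenvalues $>\lambda_{\max}$) and the ``lift'' (eigenvalues $<\lambda_{\min}$) are orthogonal, so it suffices to treat each separately; consider the cap, the lift being symmetric. Interpolating linearly, $\Sigma_t := (1-t)\Sigma + t\Sigma^{\flat}$ for $t \in [0,1]$ (restricting to the orthogonal complement of $\ker\Sigma$, on which each $\Sigma_t$ is positive definite), and using $\nabla_\Sigma W_2^2(\Sigma, A) = I - \GM(\Sigma^{-1}, A)$ together with the operator monotonicity $\GM(\Sigma_t^{-1}, A) \preceq \GM(\Sigma_t^{-1}, \lambda_{\max} I) = \lambda_{\max}^{1/2}\Sigma_t^{-1/2}$ and the fact that the capped eigendirections of $\Sigma_t$ carry eigenvalue $\ge \lambda_{\max}$ along the whole path, one checks that $v_i^\T\bigl(I - \GM(\Sigma_t^{-1},A)\bigr)v_i \ge 0$ for each such $v_i$, whence $\frac{\D}{\D t}W_2^2(\Sigma_t, A) = \langle I - \GM(\Sigma_t^{-1},A),\,\Sigma^{\flat}-\Sigma\rangle \le 0$, i.e.\ $W_2^2(\Sigma^{\flat}, A) \le W_2^2(\Sigma, A)$. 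I expect this spectral lemma to be the main obstacle: it is the only place where optimal-transport structure is genuinely used (the geometric-mean form of the gradient and its operator monotonicity), serving as a quantitative replacement for the convexity/monotonicity argument that is unavailable because of the non-negative curvature of the Bures--Wasserstein manifold.

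\textbf{Step 3: existence, Gaussianity, and non-degeneracy.} The box $\mc K$ is compact and $G$ is continuous on it by dominated convergence, using $W_2(\Sigma,A) \le \sqrt{\tr\Sigma + \tr A} \le \sqrt{2 d \lambda_{\max}}$ for $\Sigma \in \mc K$ and $A \in \supp P$; hence $G$ attains its minimum on $\mc K$ at some $\Sigma^\star$. By Steps 1--2, $\mc N(0,\Sigma^\star)$ solves~\eqref{eq:geom_median} and $\lambda_{\min} I \preceq \Sigma^\star \preceq \lambda_{\max} I$. If $\lambda_{\min} > 0$ this is already non-degenerate; if $\lambda_{\min} = 0$, a short perturbation argument excludes a degenerate minimizer: for a unit null vector $v$ of $\Sigma^\star$ one has $\Sigma^\star + \delta v v^\T \in \mc K$ for small $\delta > 0$, and since $v^\T A v > 0$ for every (non-degenerate) $A \in \supp P$, the elementary estimates $W_2^2(\mc N(0,0),\mc N(0,\delta)) = \delta$ and $W_2^2(\Sigma^\star + \delta v v^\T, A) \le W_2^2(\Sigma^\star, A) - c_A\sqrt{\delta} + O(\delta)$ with $c_A > 0$, combined with Fatou's lemma, yield $G(\Sigma^\star + \delta v v^\T) < G(\Sigma^\star)$ for some small $\delta$, contradicting optimality. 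Thus $\Sigma^\star$ may be taken non-degenerate.
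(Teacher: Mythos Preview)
Your argument is essentially correct and reaches the same conclusion, but it diverges from the paper's proof in two places. For the spectral box (your Step~2), the paper shows that $\clip^{\beta}$ is a Bures--Wasserstein contraction by passing to the square-root representation $W_2(X,Y)=\min_{AA^\T=X,\,BB^\T=Y}\norm{A-B}_{\rm F}$ and identifying $\clip^{\sqrt\beta}$ with the Euclidean projection onto $\{\norm{\cdot}\le\sqrt\beta\}$; this avoids differentiating $W_2^2$ and hence sidesteps the degeneracy issue you bracket with the parenthetical about $\ker\Sigma$ (that parenthetical does not quite make sense as written, since $A$ need not respect the splitting; you should instead approximate by $\Sigma+\epsilon I$ or do the lift first so the path stays in $\psd$). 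Your gradient/monotonicity argument via operator monotonicity of $\GM$ is nonetheless a valid alternative and has the advantage of treating the lower clip simultaneously. More substantively, for the lower eigenvalue bound the paper does \emph{not} argue directly: it only clips from above, then appeals to the smoothed Riemannian GD algorithm together with the generalized-geodesic convexity of $\pm\sqrt{\lambda_{\min/\max}}$ (Theorem~\ref{thm:sqrt_lambda_min_concave}) to trap the iterates in $[\lambda_{\min},\lambda_{\max}]$, and extracts a minimizer by compactness from a convergent sequence of near-optimal outputs. Your route is more self-contained, since it does not rely on the algorithmic convergence theorem; the paper's route, on the other hand, showcases the main geodesic-convexity result and gives a cleaner clipping proof.

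One caution on Step~3: the asserted expansion $W_2^2(\Sigma^\star+\delta vv^\T,A)\le W_2^2(\Sigma^\star,A)-c_A\sqrt\delta+O(\delta)$ is correct (a block computation gives $c_A=2\sqrt{v^\T A v - v^\T A_{12}A_{22}^{-1}A_{12}^\T v}$, the Schur complement in the $v$-direction, which is positive since $A\succ 0$), but it is not the ``elementary estimate'' you claim, and ``$v^\T A v>0$'' alone is not the reason $c_A>0$. Also, your appeal to Fatou should be the reverse form (upper bound by the integrable constant $1$, coming from $W_2(\Sigma^\star+\delta vv^\T,\Sigma^\star)=\sqrt\delta$). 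These are fillable details rather than genuine gaps.
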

\begin{proof}
    See Appendix~\ref{scn:proofs_median}.
\end{proof}

\begin{rema}
    Suppose now that $P$ is supported on non-degenerate Gaussian distributions which are not necessarily centered. Then, the proof of Proposition~\ref{prop:basic_prop_median} applies with minor modifications to show that the minimizer of the median functional is still attained at a Gaussian distribution.
    However, unlike the barycenter and entropically regularized barycenter, it is not the case that the mean of the Wasserstein geometric median is the Euclidean geometric median of the means, thus it is not as straightforward to reduce to the centered case for this problem. Nevertheless, in Appendix~\ref{scn:non_centered_median}, we describe a reduction which allows the algorithm described in the next section to be applied in a black box manner to the non-centered case, with corresponding convergence guarantees.
\end{rema}

\subsection{Algorithm}

Since the Wasserstein distance $W_2(\Sigma,\cdot)$ is neither geodesically convex nor geodesically smooth, nor Euclidean convex nor Euclidean smooth (see Remark~\ref{rmk:w2_horrible}), it poses challenges for optimization.
We therefore smooth the objective before optimization.
Given a desired target accuracy $\varepsilon > 0$, let
\begin{align*}
    W_{2,\varepsilon}  \deq  \sqrt{W_2^2 + \varepsilon^2}\,, \qquad F_\varepsilon(b)  \deq  \int W_{2,\varepsilon}(b, \cdot) \, \D P\,.
\end{align*}
The smoothed Riemannian gradient descent algorithm is given as Algorithm~\ref{ALG:smoothed}. See Appendix~\ref{app:median} for a derivation of the update rule.

\begin{algorithm}[H]
  \caption{Smoothed GD for Median}\label{ALG:smoothed}
  \begin{algorithmic}[1]
    \Procedure{Median-GD}{$\Sigma_0, P, T, \varepsilon, \eta$}
      \For{$t = 1, \ldots, T$}
        \State $
        S_t \gets I_d + \eta \int \{\GM(\Sigma_{t - 1}^{-1}, \Sigma) - I_d\} \, {W_{2,\varepsilon}(\Sigma_{t-1}, \Sigma)}^{-1} \, \D P(\Sigma)
        $
        \State $ \Sigma_{t} \gets S_t\Sigma_{t - 1} S_t$
      \EndFor
      \State \Return $\Sigma_{T}$
    \EndProcedure
  \end{algorithmic}
\end{algorithm}

\subsection{Convergence guarantees}

Despite the smoothing, the objective $F_\varepsilon$ is still non-convex, and we do not provide a global minimization guarantee. Instead,
we now show that Algorithm~\ref{ALG:smoothed} can find an $\mc O(\varepsilon)$-stationary point for the smoothed geometric median functional $F_\varepsilon$ in $\mc O(1/\varepsilon^3)$ iterations. We emphasize that as in our other results, this convergence rate is dimension-independent.

\begin{thm}\label{thm:median_guarantee}
    Assume that we initialize Algorithm~\ref{ALG:smoothed} at $\Sigma_0 \in \supp P$ with step size $\eta = \varepsilon$.
    Then, Algorithm~\ref{ALG:smoothed}
    yields iterates with $\min_{t=0,1,\dotsc,T}{\norm{\nabla F_\varepsilon(\Sigma_t)}_{\Sigma_t}} \le \varepsilon$ if
    \begin{align*}
        T
        &\ge \frac{2 F_\varepsilon(\Sigma_0)} {\varepsilon^3}\,.
    \end{align*}
\end{thm}
\begin{proof}
    See Appendix~\ref{scn:proofs_median}.
\end{proof}

An earlier version
of Theorem~\ref{thm:median_guarantee}
gave guarantees
on the objective
gap
$F(\Sigma_T)
- F(\Sigma^\star_{\rm median})$. However,
the proof of this
result contained a mistake. We have thus
changed Theorem~\ref{thm:median_guarantee}
to guarantee only
approximate
stationarity,
and leave a study
of global convergence
for the Bures--Wasserstein
median functional
to future work.

\subsection{Numerical experiments}
\begin{figure}[!h]
    \centering
    \begin{minipage}{.5\textwidth}
        \centering
         \includegraphics[width=\textwidth]{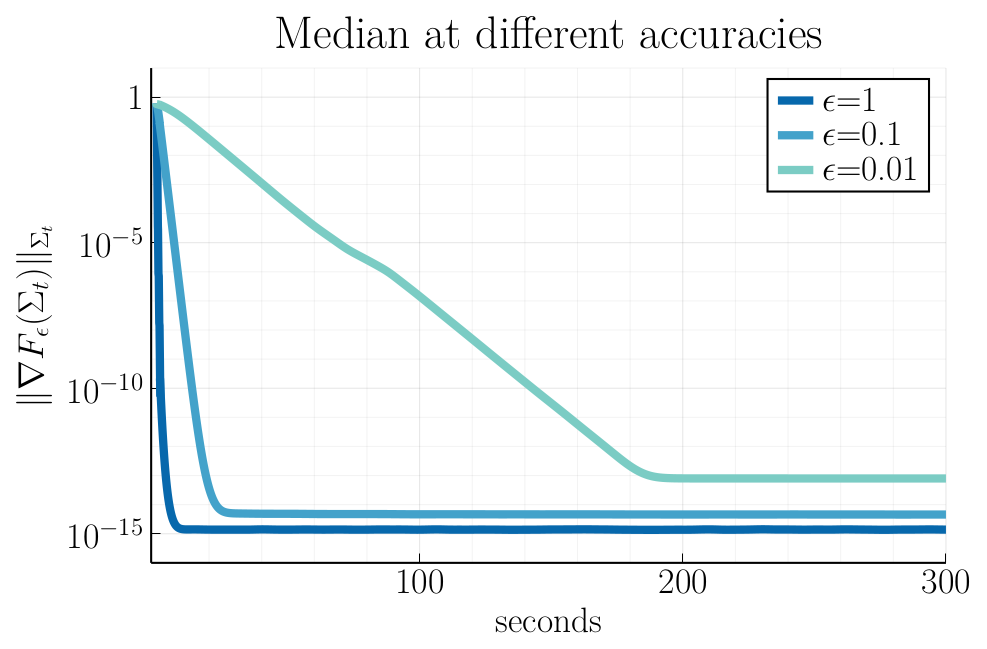}
\caption{Evolution of the gradient of the median objective for varying $\varepsilon$. }
        \label{fig:median precision}
    \end{minipage}
    \hfill
    \begin{minipage}{0.48\textwidth}
        \centering
        \includegraphics[width=\textwidth]{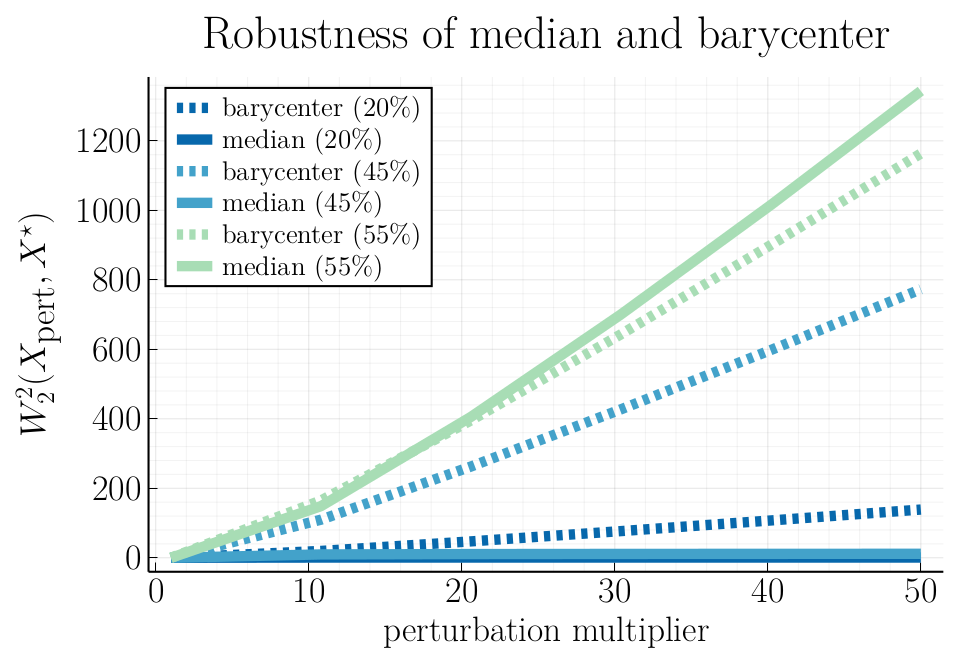}
        \caption{Robustness of the Wasserstein median.}
        \label{fig:robustness}
    \end{minipage}
\end{figure}

In Figure \ref{fig:median precision} we plot the gradient of the smoothed median functional $\|\nabla F_\varepsilon(\Sigma_t)\|_{\Sigma_t}$ as we optimize $F_\varepsilon$ using Algorithm~\ref{ALG:smoothed} for varying choices of $\varepsilon$ for different values of $\varepsilon$. There is a trade-off for choosing the regularization parameter $\varepsilon$: as $\varepsilon$ increases the rate of convergence to a stationary point becomes slower. The covariance matrices are generated as in Figure \ref{fig:high-precision barycenter}, with $n=d=30$ and $[\lambda_{\min}, \lambda_{\max}] = [0.01,10]$. 

In Figure \ref{fig:robustness} we illustrate the robustness of the Wasserstein geometric median up to its breakdown point of $50\%$ \cite{FKJ}. We take random input matrices as above, with $n=d=20$ and $[\lambda_{\min}, \lambda_{\max}] = [1,10]$, and compute their barycenter and approximate median ($\varepsilon=1$). We then perturb a fraction ($20\%$, $45\%$, and $55\%$ for our figure) of the matrices by multiplying them by a constant greater than $1$. The $x$-axis of the plot shows the size of the perturbation while the $y$-axis gives the distance of the original barycenter and median to the barycenter and median of this new, perturbed dataset.

We also implemented Euclidean GD for this geometric median problem; plots are omitted for brevity since the results are similar to those for the barycenter (c.f.\ Section~\ref{scn:bures_gd_numerics}) in that Euclidean GD depends much more heavily on parameter tuning. Note also that Euclidean GD does not come with global convergence guarantees for this problem since it is non-convex in the Euclidean geometry.

\bigskip

\noindent{\bf Acknowledgments.} \\
We are grateful to Victor-Emmanuel Brunel, Tyler Maunu, Pablo Parrilo, and Philippe Rigollet for stimulating conversations, and particularly to Aleksey Kroshnin for bringing the flaw in the previous version of Theorem~\ref{thm:bures_avg_case} to our attention. We also thank Pablo Parrilo for pointing out that Bures--Wasserstein barycenters have an SDP formulation (Appendix~\ref{scn:sdp}).

JA was supported by NSF Graduate Research Fellowship 1122374 and a TwoSigma PhD fellowship.
SC and AS were supported by the Department of Defense (DoD) through the National Defense Science \& Engineering Graduate Fellowship (NDSEG) Program.

\newpage
\appendix
\part{Appendix}
\parttoc[e]

\section{Background on the Bures--Wasserstein manifold}\label{scn:bw_background}

In this section, we collect relevant background about Bures--Wasserstein geometry to make the paper more self-contained.

\subsection{Geometry}\label{scn:bures_geometry}

We begin by describing the geometry of optimal transport, and then explain how to specialize the general concepts to the Bures--Wasserstein manifold. The books~\cite{ambrosio2008gradient, villani2009ot} are definitive references for the Riemannian structure of optimal transport. We attempt to convey the main relevant ideas, and in doing so do not attempt to be fully rigorous here.

Let $\mc P_{2,\rm ac}(\R^d)$ denote the space of all probability measures on $\R^d$ which are absolutely continuous (i.e., admit a density w.r.t.\ the Lebesgue measure) and which have a finite second moment. When equipped with the $2$-Wasserstein distance $W_2$, it becomes a metric space.
In fact, more is true: $(\mc P_{2,\rm ac}(\R^d), W_2)$ admits a formal Riemannian structure which we now describe. Given $\mu_0, \mu_1 \in \mc P_{2,\rm ac}(\R^d)$, let $T$ denote the optimal transport map from $\mu_0$ to $\mu_1$; thus, $T : \R^d\to\R^d$ is a map satisfying $T_\# \mu_0 = \mu_1$. Here, $\#$ denotes the pushforward operation, i.e., if $X \sim \mu_0$, then $T(X) \sim \mu_1$. The constant-speed geodesic ${(\mu_t)}_{t\in [0,1]}$ joining $\mu_0$ to $\mu_1$ is described via
\begin{align*}
    \mu_t = {[(1-t)\, {\id} + tT]}_\# \mu_0\,, \qquad t \in [0,1]\,.
\end{align*}
This geodesic has the following interpretation: draw a ``particle'' $X_0 \sim \mu_0$, and move $X_0$ to $T(X_0)$ with constant speed for one unit of time along the Euclidean geodesic (i.e., straight line) joining these endpoints; thus, at time $t$, the particle is at position $X_t = (1-t) X_0 + tT(X_0)$. Then, $\mu_t$ is simply the law of $X_t$.

We take the tangent vector of the geodesic ${(\mu_t)}_{t\in [0,1]}$ at time $0$ to be the mapping $T - {\id}$; note that in the particle view, $T(X_0) - X_0$ represents the velocity of the particle at time $0$. The tangent space $T_{\mu_0} \mc P_{2,\rm ac}(\R^d)$ to $\mc P_{2,\rm ac}(\R^d)$ at $\mu_0$ is then defined to consist of all possible tangent vectors to geodesics emanating from $\mu_0$. Actually, in order to make $T_{\mu_0} \mc P_{2,\rm ac}(\R^d)$ formally into a (closed subset of a) Hilbert space, the definition is modified to read (\cite[Theorem 8.5.1]{ambrosio2008gradient})
\begin{align}\label{eq:w2_geod}
    T_{\mu_0} \mc P_{2,\rm ac}(\R^d)
    & \deq  \overline{\{\lambda \, (T_{\mu_0\to\nu} - {\id}) : \lambda > 0, \; \nu \in \mc P_{2,\rm ac}(\R^d)\}}^{L^2(\mu_0)}\,,
\end{align}
where the overline denotes the $L^2(\mu_0)$ closure; we equip this tangent space with the $L^2(\mu_0)$ norm. Thus, for instance, we have $W_2^2(\mu_0, \mu_1) = \E[\norm{X_0 - T_{\mu_0\to\mu_1}(X_0)}^2] = \norm{T_{\mu_0\to\mu_1} - {\id}}_{L^2(\mu_0)}^2$, which says that the squared norm of the tangent vector of the geodesic ${(\mu_t)}_{t\in [0,1]}$ equals the squared Wasserstein distance. We may write $\norm \cdot_{\mu_0}$ as a shorthand for $\norm\cdot_{L^2(\mu_0)}$.

The Riemannian exponential map $\exp_\mu$ is the mapping $T_\mu \mc P_{2,\rm ac}(\R^d) \to \mc P_{2,\rm ac}(\R^d)$ which maps a tangent vector $v$ to the constant-speed geodesic emanating from $\mu$ with velocity $v$, evaluated at time $1$.\footnote{Generally, in Riemannian geometry, the exponential map is not defined on the entire tangent space but rather a subset of it; this is also the case for Wasserstein space.} From our description above, we see that $\exp_\mu v = {({\id} + v)}_\# \mu$, since the tangent vector joining $\mu$ to $T_\# \mu$ is $v = T - {\id}$ (when $T$ is an optimal transport map).
It is also convenient to define the Riemannian logarithmic map $\log_\mu : \mc P_{2,\rm ac}(\R^d) \to T_\mu \mc P_{2,\rm ac}(\R^d)$ to be the inverse of the exponential map $\exp_\mu$; in our context, $\log_\mu \nu = T_{\mu\to\nu} - {\id}$.

In Riemannian geometry, it is common to localize the argument around a measure $\mu$, which loosely means replacing a measure $\nu$ with its image $\log_\mu \nu$ in the tangent space at $\mu$. This is convenient because the tangent space at $\mu$ is embedded in the Hilbert space $L^2(\mu)$, and we can leverage Hilbert space arguments (e.g., computing inner products). In order to do this one must quantify the distortion introduced by the map $\log_\mu$, which is morally related to the curvature of the manifold.

We now specialize the above concepts to the Bures--Wasserstein manifold, in which non-degenerate centered Gaussians are identified with their covariance matrices; thus, the Bures--Wasserstein manifold is the space $\psd$ of positive-definite symmetric matrices equipped with a certain Riemannian metric.

The optimal transport problem between Gaussians is discussed in many places, e.g.,~\cite{bhatiajainlim2019bures}. Given two covariance matrices $\Sigma,\Sigma'\in\psd$, the optimal transport map between the corresponding centered Gaussians is the linear map $\R^d\to\R^d$ given by
\begin{align*}
    T_{\Sigma\to\Sigma'}
    &= \Sigma^{-1/2} \, {(\Sigma^{1/2} \Sigma' \Sigma^{1/2})}^{1/2} \, \Sigma^{-1/2}\,.
\end{align*}
Note that this is a symmetric matrix. Since $AX \sim \mc N(0, A\Sigma A^\T)$ for $X \sim \mc N(0, \Sigma)$, the fact that $T_{\Sigma\to\Sigma'} X \sim \mc N(0,\Sigma')$ reduces to the matrix identity $T_{\Sigma\to\Sigma'} \Sigma T_{\Sigma\to\Sigma'} = \Sigma'$, which can be verified by hand. The above formula yields
\begin{align}\label{eq:alternate_w2_gaussian}
\begin{aligned}
    W_2^2(\Sigma, \Sigma')
    &= \E[\norm{X - T_{\Sigma\to\Sigma'} X}^2]
    = \E[\norm X^2 + \norm{T_{\Sigma\to\Sigma'} X}^2 - 2 \, \langle X, T_{\Sigma\to\Sigma'} X\rangle] \\
    &= \tr(\Sigma + \Sigma' - 2\Sigma T_{\Sigma\to\Sigma'})\,.
\end{aligned}
\end{align}

From the general description of Wasserstein geodesics, the constant-speed geodesic ${(\Sigma_t)}_{t\in [0,1]}$ joining $\Sigma$ to $\Sigma'$ is given by
\begin{align}\label{eq:bures_geod}
    \Sigma_t
    &= \bigl((1-t) I_d + t T_{\Sigma\to\Sigma'}\bigr) \Sigma \bigl((1-t) I_d + t T_{\Sigma\to\Sigma'}\bigr) \,, \qquad t \in [0,1]\,.
\end{align}
The tangent space $T_\Sigma \psd$ is identified with the space $\sym$ of symmetric $d\times d$ matrices. Given $S \in T_\Sigma \psd$, the tangent space norm of $S$ is given by $\norm S_{L^2(\mc N(0, \Sigma))} = \sqrt{\E[\norm{SX}^2]} = \sqrt{\langle S^2,\Sigma\rangle}$, which we simply denote as $\norm S_\Sigma$. More generally, given matrices $A$, $B$, we write $\langle A, B\rangle_\Sigma  \deq  \tr(A^\T\Sigma B)$. The exponential map\footnote{Technically the exponential map is only defined if $S + I_d \succeq 0$; this is because if $S + I_d$ is not positive semidefinite, then $S + I_d$ is not an optimal transport map due to Brenier's theorem.} is $\exp_\Sigma S = (I_d + S)\,\Sigma\, (I_d + S)$, so that $\exp_\Sigma(T_{\Sigma\to\Sigma'} - I_d) = \Sigma'$. The inverse of the exponential map is then $\log_\Sigma \Sigma' = T_{\Sigma\to\Sigma'} - I_d$.

The description of the Bures--Wasserstein tangent space is in accordance with the general Riemannian structure of Wasserstein space (see~\cite{ambrosio2008gradient}) and agrees with the convention in~\cite{chewietal2020buresgd}. We now elaborate on other possible conventions, in order to dispel possible confusion.

The space $\psd$ is often studied as a manifold in other contexts, and the tangent space at any point is usually identified with $\sym$. It is crucial to realize, however, that a tangent space is not simply a vector space (or inner product space); a tangent space also has the interpretation of describing velocities of curves. In other words, for each tangent vector $S$, we also need to prescribe which curves have velocity $S$. In the usual way of describing the manifold structure of $\psd$, this prescription is given as follows. Given a curve ${(\Sigma_t)}_{t\in\R} \subseteq \psd$, if $\dot\Sigma_0$ denotes the ordinary time derivative of this curve at time $0$, then we declare $\dot\Sigma_0$ to be the tangent vector of the curve at time $0$. Although this prescription is natural, observe that it conflicts with our description of the tangent space structure of the Bures--Wasserstein manifold; in particular, for the curve in~\eqref{eq:bures_geod}, we have described the tangent vector to this curve (at time $0$) to be $T_{\Sigma\to\Sigma'} - I_d$, but the ordinary time derivative of this curve is $(T_{\Sigma\to\Sigma'} - I_d) \Sigma + \Sigma (T_{\Sigma\to\Sigma'} - I_d)$.

To summarize the discussion in the preceding paragraph: although the usual description of the tangent space of $\psd$ at $\Sigma$ and our description of the tangent space are formally the same, in that they are both identified with $\sym$, they differ in that tangent vectors from the two descriptions give rise to different curves. Note that if we were to adopt the usual description of the tangent space of $\psd$, then we would have to define the tangent space norm $\norm\cdot_\Sigma$ differently from above. In this paper, we adopt the convention described earlier in this section in order to preserve the connection with the general setting of optimal transport.

\subsection{Geodesic convexity and generalized geodesic convexity}\label{scn:gen_geod_cvxty}

Once we have geodesics, we can then define convex functions. A function $f : \mc P_{2,\rm ac}(\R^d) \to \R$ is said to be \emph{geodesically convex} if for all constant-speed geodesics ${(\mu_t)}_{t\in [0,1]}$ (i.e., curves described by~\eqref{eq:w2_geod}), it holds that
\begin{align}\label{eq:geod_cvx_fn}
    f(\mu_t)
    &\le (1-t)\, f(\mu_0) + t \, f(\mu_1)\,, \qquad t\in [0,1]\,.
\end{align}
It turns out, however, that many natural examples of geodesically convex functions on Wasserstein space are convex in a stronger sense, in that they satisfy the inequality~\eqref{eq:geod_cvx_fn} for a larger class of curves than geodesics.
A \emph{generalized geodesic} from $\mu_0$ to $\mu_1$, with basepoint $\nu \in \mc P_{2,\rm ac}(\R^d)$, is defined to be the curve ${(\mu_t^\nu)}_{t\in [0,1]}$ where
\begin{align*}
    \mu_t^\nu
    & \deq  {[(1-t) T_{\nu\to\mu_0} + t T_{\nu\to\mu_1}]}_\# \nu\,, \qquad t \in [0,1]\,.
\end{align*}
A function $f : \mc P_{2,\rm ac}(\R^d) \to \R$ is said to be \emph{convex along generalized geodesics} if for every generalized geodesic ${(\mu_t^\nu)}_{t\in [0,1]}$, it holds that
\begin{align*}
    f(\mu_t^\nu)
    &\le (1-t)\, f(\mu_0) + t \, f(\mu_1)\,, \qquad t\in [0,1]\,.
\end{align*}
Note that the geodesic ${(\mu_t)}_{t\in [0,1]}$ joining $\mu_0$ to $\mu_1$ coincides with the generalized geodesic ${(\mu_t^{\mu_0})}_{t\in [0,1]}$, so that convexity along generalized geodesics is indeed stronger than geodesic convexity.

Generalized geodesics were studied in~\cite{ambrosio2008gradient} in order to rigorously study gradient flows on Wasserstein space. The added flexibility of generalized geodesics is sometimes important for applications~\cite{ahnchewi2021mirrorlangevin}; in our work, as well as in~\cite{chewietal2020buresgd}, generalized geodesics are needed to study the iterates of Riemannian gradient descent.

The interpretation of generalized geodesics is that we linearize $\mc P_{2,\rm ac}(\R^d)$ on the tangent space $T_\nu \mc P_{2,\rm ac}(\R^d)$. This means that we replace $\mu_0$ with its image $\log_\nu \mu_0 = T_{\nu\to\mu_0} - {\id}$ in the tangent space, and similarly for $\mu_1$. Since the tangent space is a subset of a Hilbert space, geodesics in the tangent space are described by straight lines, i.e.,
\begin{align*}
    t \mapsto (1-t) T_{\nu\to\mu_0} + t T_{\nu\to\mu_1} - {\id}\,.
\end{align*}
If we translate back to $\mc P_{2,\rm ac}(\R^d)$, we end up with the curve
\begin{align*}
    t \mapsto \exp_\nu\bigl((1-t) T_{\nu\to\mu_0} + t T_{\nu\to\mu_1} - {\id}\bigr)
    = {[(1-t) T_{\nu\to\mu_0} + t T_{\nu\to\mu_1}]}_\# \nu
    = \mu_t^\nu\,.
\end{align*}
Thus, the property of being convex along generalized geodesics can be reformulated as requiring that
\begin{align}\label{eq:alternate_gen_geod}
    f \circ \exp_\nu : T_\nu \mc P_{2,\rm ac}(\R^d)\to\R \qquad\text{is convex for every}~\nu \in \mc P_{2,\rm ac}(\R^d)\,.
\end{align}

In Euclidean space, convexity of a function $f : \R^d\to\R$ is equivalent, via Jensen's inequality, to the following statement: for every probability measure $P$ on $\R^d$, it holds that $f(\int x \, \D P(x)) \le \int f(x) \, \D P(x)$. Since the Wasserstein barycenter is the Wasserstein analogue of the mean, we can write a similar definition on Wasserstein space.
Given a probability measure $P$ on $\mc P_{2,\rm ac}(\R^d)$, let $b_P$ denote its Wasserstein barycenter. We say that $f : \mc P_{2,\rm ac}(\R^d) \to \R$ is \emph{convex along barycenters} if
\begin{align*}
    f(b_P)
    &\le \int f(\mu) \, \D P(\mu)\,, \qquad \text{for all}~P \in \mc P_2\bigl(\mc P_{2,\rm ac}(\R^d)\bigr)\,.
\end{align*}
Similarly, via~\eqref{eq:alternate_gen_geod}, we can define $f : \mc P_{2,\rm ac}(\R^d) \to \R$ to be convex along generalized barycenters if
\begin{align}\label{eq:conv_along_gen_bary}
\begin{aligned}
    f\circ \exp_\nu\Bigl(\int v \, \D P(v)\Bigr) &\le \int f\circ \exp_\nu(v) \, \D P(v) \\
    &\qquad\qquad{} \text{for all}~\nu \in \mc P_{2,\rm ac}(\R^d)~\text{and}~P \in \mc P_2\bigl(T_\nu\mc P_{2,\rm ac}(\R^d)\bigr)\,.
    \end{aligned}
\end{align}
However, since the tangent space is embedded in a Hilbert space, there is no difference between~\eqref{eq:alternate_gen_geod} and~\eqref{eq:conv_along_gen_bary}.

To summarize the relationship between these four concepts:
\begin{align*}
    \text{convex along generalized barycenters}
    &\iff \text{convex along generalized geodesics} \\
    &\implies \text{convex along barycenters}
    \implies \text{geodesically convex}\,.
\end{align*}
For a justification of these facts and further discussion, see~\cite{agueh2011barycenter}.

\subsection{Geodesic optimization}\label{scn:geod_opt}

Given a functional $F : \mc P_{2,\rm ac}(\R^d)\to\R$, we can define its Wasserstein gradient formally as follows.
For any constant-speed geodesic ${(\mu_t)}_{t\in [0,1]}$, the gradient of $F$ at $\mu_0$ is the element $\nabla F(\mu_0) \in T_{\mu_0} \mc P_{2,\rm ac}(\R^d)$ satisfying
\begin{align*}
    \partial_t|_{t=0} F(\mu_t)
    &= \langle \nabla F(\mu_0), T_{\mu_0\to\mu_1} - {\id}\rangle_{\mu_0}\,.
\end{align*}
The \emph{Riemannian gradient descent} update for $F$ with step size $\eta$ starting at $\mu$ is
\begin{align*}
    \mu^+
    & \deq  \exp_\mu\bigl(-\eta \nabla F(\mu)\bigr)
    = {[{\id} - \eta \nabla F(\mu)]}_\# \mu\,.
\end{align*}
Note that the step size $\eta$ should be chosen small enough that $-\eta \nabla F(\mu)$ lies in the domain of the exponential map. 
From the general description of the tangent space of Wasserstein space, $\nabla F(\mu)$ is the gradient of a mapping $\psi : \R^d\to\R$; then, $-\eta \nabla F(\mu)$ belongs to the domain of the exponential map if $\norm\cdot^2/2 - \eta \psi$ is convex.

We say that $F$ is $\alpha$-strongly convex if
\begin{align*}
    F(\mu_1)
    &\ge F(\mu_0) + \langle \nabla F(\mu_0), \log_{\mu_0} \mu_1 \rangle_{\mu_0} + \frac{\alpha}{2} \, W_2^2(\mu_0,\mu_1)\,, \qquad\text{for all}~\mu_0,\mu_1 \in \mc P_{2,\rm ac}(\R^d)\,,
\end{align*}
and $\beta$-smooth if
\begin{align*}
    F(\mu_1)
    &\le F(\mu_0) + \langle \nabla F(\mu_0), \log_{\mu_0} \mu_1 \rangle_{\mu_0} + \frac{\beta}{2} \, W_2^2(\mu_0,\mu_1)\,, \qquad\text{for all}~\mu_0,\mu_1 \in \mc P_{2,\rm ac}(\R^d)\,.
\end{align*}
These two properties are formally equivalent to the following statements: for any constant-speed geodesic ${(\mu_t)}_{t\in [0,1]}$, one has
\begin{align*}
    \partial_t^2|_{t=0} F(\mu_t)
    &\ge \alpha \, W_2^2(\mu_0, \mu_1)
    \qquad\text{or}\qquad\partial_t^2|_{t=0} F(\mu_t)
    \le \beta \, W_2^2(\mu_0, \mu_1),
\end{align*}
respectively.

\subsection{Curvature and the barycenter functional}\label{scn:curv_bary}

One of the interesting features of the barycenter problem is that, because it is defined in terms of the squared distance function, it captures key geometric features of the underlying space; in fact, this is arguably the reason for the success of the barycenter for geometric applications. To further discuss this connection, it is insightful to abstract the situation to computing barycenters on a metric space. 

Given a metric space $(X, d)$ and a probability measure $P$ on $X$, a barycenter of $P$ is a solution of
\begin{align*}
    \minimize_{b\in X} \qquad F_P(b)  \deq  \frac{1}{2} \int d^2(b, \cdot) \, \D P\,.
\end{align*}
The basic structure required on $X$ in order to study first-order optimization methods is the presence of geodesics. This is formalized by the notion of a \emph{geodesic space}, which is studied in metric geometry; see~\cite{buragoivanov2001metricgeometry}. Then, we may define a function $F : X \to\R$ to be \emph{$\alpha$-strongly convex} if for all geodesics ${(x_t)}_{t\in [0,1]}$ in $X$, it holds that
\begin{align*}
    F(x_t)
    &\le (1-t) \, F(x_0) + t \, F(x_1) - \frac{\alpha \, t \, (1-t)}{2} \, d^2(x_0,x_1)\,, \qquad \text{for all}~t \in [0,1]\,.
\end{align*}

It is known that the convexity properties of the barycenter functional $F_P$ are related to the \emph{curvature} of the space. Here, curvature is interpreted as the \emph{Alexandrov curvature}, which is the generalization of sectional curvature to geodesic spaces, see~\cite{buragoivanov2001metricgeometry}. Then, the result is that $F_P$ is $1$-strongly convex for every probability measure $P$ on $X$ if and only if $X$ has \emph{non-positive curvature}; see~\cite{sturm2003npc} for precise statements. In fact, the $1$-strong convexity of barycenter functionals is essentially the definition of non-positive curvature in this context.

Consequently, much stronger results are known for barycenters in non-positively curved spaces, ranging from basic properties such as existence and uniqueness, to statistical estimation and optimization; for details see the nice article~\cite{sturm2003npc}.

In contrast, it is well-known that Wasserstein space $\mc P_{2,\rm ac}(\R^d)$ (and hence, the Bures--Wasserstein space) is \emph{non-negatively curved}~\cite[Theorem 7.3.2]{ambrosio2008gradient}. This means, for instance, that convexity and properties related to convexity (such as the PL inequality employed in Appendix~\ref{scn:bures_gd}) are not automatic for the barycenter functional in Wasserstein space. On the other hand, as emphasized in~\cite{chewietal2020buresgd}, this non-negative curvature is related to the \emph{smoothness} of the barycenter functional.

\subsection{Additional facts about the Wasserstein metric}\label{scn:facts}

Here we collect various facts about the Wasserstein metric for easy reference in the sequel. 

\begin{enumerate}
    \item \textbf{Euclidean gradient vs.\ Bures--Wasserstein gradient}.\label{fact:bures_gradient}
    
    Let $F : \psd \to \R$ be a function. Throughout this paper, we denote by $\on D F$ the usual Euclidean gradient of $F$, and we reserve $\nabla F$ for the gradient with respect to the Bures--Wasserstein geometry. In fact, under our tangent space convention, these two quantities are related as follows: let ${(\Sigma_t)}_{t\in\R}$ denote a curve in $\psd$. We temporarily denote the Euclidean tangent vector (i.e., ordinary time derivative) to this curve via $\dot \Sigma^{\rm E}$, and the Bures--Wasserstein tangent vector via $\dot \Sigma^{\rm BW}$, which are related via $\dot\Sigma^{\rm E} = \dot\Sigma^{\rm BW}\Sigma + \Sigma \dot \Sigma^{\rm BW}$ (see the discussion in Appendix~\ref{scn:bures_geometry}). We can compute the time derivative of $F$ in two ways:
    \begin{align*}
        \langle \nabla F(\Sigma_0), \dot \Sigma^{\rm BW}_0\rangle_{\Sigma_0}
        &= \partial_t|_{t=0} F(\Sigma_t)
        = \langle \on D F(\Sigma_0), \dot \Sigma_0^{\rm E} \rangle
        = \langle \on D F(\Sigma_0), \dot \Sigma^{\rm BW}_0 \Sigma_0 + \Sigma_0 \dot \Sigma^{\rm BW}_0 \rangle \\
        &= 2 \,\langle \on D F(\Sigma_0), \dot \Sigma^{\rm BW}_0 \rangle_{\Sigma_0}\,.
    \end{align*}
    From this we can conclude that
    \begin{align*}
        \nabla F(\Sigma_0)
        &= 2\,\on D F(\Sigma_0)\,.
    \end{align*}
    \item \textbf{Gradient of the squared Wasserstein distance}.\label{fact:grad_sq_dist}
    
    For any $\nu\in \mc P_{2,\rm ac}(\R^d)$, the gradient of the functional $W_2^2(\cdot,\nu)$ at $\mu$ is given by
    \begin{align*}
        \nabla W_2^2(\cdot, \nu)(\mu)
        &= -2\,(T_{\mu\to\nu} - {\id}) = -2\log_\mu\nu\,.
    \end{align*}
    This is derived in, e.g.,~\cite{zemel2019procrustes}. In the Bures--Wasserstein setting, it can be proven via matrix calculus; see the proof of Theorem~\ref{lem:strong convexity smoothness}.
    \item \textbf{Inverse of the transport map}.\label{fact:inv_transport_map}
    
    If $\Sigma,\Sigma' \in \psd$, then the transport map $T_{\Sigma\to\Sigma'}$ is the inverse matrix for the transport map $T_{\Sigma'\to\Sigma}$. This can be verified from the formula~\eqref{eq:gaussian_ot_map} using the symmetry of the geometric mean.
    More generally, it is a special case of the convex conjugacy relation between optimal Kantorovich potentials.
    \item \textbf{Diagonal case}.\label{fact:diagonal}
    
    If $\Sigma_0, \Sigma_1 \in \psd$ are \emph{diagonal matrices}, then $W_2^2(\Sigma_0,\Sigma_1) = \norm{\Sigma_0^{1/2} - \Sigma_1^{1/2}}_{\rm F}^2$ is the squared Frobenius norm between the square roots. This can be verified, e.g., from the explicit formula~\eqref{eq:w2_formula} using the fact that $\Sigma_0$ and $\Sigma_1$ commute. Note that in one dimension, all matrices are diagonal. More generally, these observations extend to when $\Sigma_0$ and $\Sigma_1$ commute.
    
    Similarly, it can be seen from~\eqref{eq:bures_geod} that the geodesic is given by
    \begin{align*}
        \Sigma_t^{1/2}
        &= (1-t) \, \Sigma_0^{1/2} + t \, \Sigma_1^{1/2}\,, \qquad t \in [0,1]\,,
    \end{align*}
    which says that the Bures--Wasserstein geodesic between diagonal (or commuting matrices) is simply the Euclidean geodesic after applying the square root map.
    \item \textbf{The case of non-zero means}. \label{fact:non_zero_mean}
    
    For any $\mu,\nu \in \mc P_2(\R^d)$, suppose that the means of these distributions are $m_\mu$ and $m_\nu$, respectively.
    Let $\bar\mu$, $\bar\nu$ denote the centered versions of these distributions.
    Then, it holds that
    \begin{align*}
        W_2^2(\mu,\nu)
        &= \norm{m_\mu - m_\nu}^2 + W_2^2(\bar\mu,\bar\nu)\,.
    \end{align*}
    This can be proven directly from the definition~\eqref{eq:defn_w2}.
    \item \textbf{A lower bound on the Wasserstein distance}.\label{fact:low_bdd_gaussian}
    
    Let $\mu, \nu \in \mc P_2(\R^d)$. If $\tilde \mu$ and $\tilde \nu$ are \emph{Gaussian} measures with the same moments up to order two as $\mu$ and $\nu$, respectively, then $W_2(\mu,\nu) \ge W_2(\tilde\mu,\tilde \nu)$~\cite{cuestaalbertosmatranbeatuerodiaz1996lowerbdw2}.
\end{enumerate}

\section{Proofs for the geodesic convexity results}\label{scn:proof_for_geod_cvx}

\subsection{Proof of Theorem~\ref{thm:sqrt_lambda_min_concave}}

See Appendix~\ref{scn:bures_geometry} and~\ref{scn:gen_geod_cvxty} for background on the relevant geometric concepts.

We begin by proving that the functionals $-\sqrt{\lambda_{\min}}$ and $\sqrt{\lambda_{\max}}$ are geodesically convex. We do this by establishing that these functionals are convex along barycenters, since that implies geodesic convexity (see Appendix~\ref{scn:gen_geod_cvxty}).
The following argument is implicit in the proofs of~\cite[Theorem 6.1]{agueh2011barycenter} and~\cite[Theorem 8]{bhatiajainlim2019bures}, and we include it for completeness.
    
\begin{thm}\label{thm:geod_cvxty_result}
    The functionals $-\sqrt{\lambda_{\min}} : \psd\to\R$ and $\sqrt{\lambda_{\max}} : \psd \to \R$ are convex along barycenters.
\end{thm}
\begin{proof}
    If $Q$ is a probability measure on $\psd$ with barycenter $\Sigstar$, then
    \begin{align*}
        \Sigstar
        &= \int {(\Sigma^{\star \, 1/2} \Sigma \Sigma^{\star \, 1/2})}^{1/2} \, \D Q(\Sigma)\,,
    \end{align*}
    see~\cite[Theorem 6.1]{agueh2011barycenter}. This implies
    \begin{align*}
        \lambda_{\min}(\Sigstar)
        &\ge \int \sqrt{\lambda_{\min}(\Sigma^{\star \, 1/2} \Sigma \Sigma^{\star \, 1/2})} \, \D Q(\Sigma)
        \ge \sqrt{\lambda_{\min}(\Sigstar)} \int \sqrt{\lambda_{\min}(\Sigma)} \, \D Q(\Sigma)\,,
    \end{align*}
    whence
    \begin{align*}
        \sqrt{\lambda_{\min}(\Sigstar)}
        &\ge \int \sqrt{\lambda_{\min}(\Sigma)} \, \D Q(\Sigma)\,.
    \end{align*}
    A similar argument applies for $\sqrt{\lambda_{\max}}$.
\end{proof}

\begin{rema}
    This result implies for instance that the set of PSD matrices with eigenvalues lying in a certain range is geodesically convex.
\end{rema}

Since the update for Bures--Wasserstein SGD only involves moving along geodesics, the above result already suffices to control the eigenvalues of the SGD iterates. However, the update for Bures--Wasserstein GD entails movement along \emph{generalized} geodesics, for which we need the control in Theorem~\ref{thm:sqrt_lambda_min_concave}.

Before proving Theorem~\ref{thm:sqrt_lambda_min_concave}, however, we provide some intuition for the proof. Denote by $F$ the barycenter functional $F(\Sigma) \deq \frac{1}{2} \int W_2^2(\Sigma,\cdot) \, \D Q$ corresponding to the measure $Q$ and for the sake of intuition, pretend that $\sqrt{\lambda_{\min}}$ is differentiable everywhere. Let ${(\Sigma_t)}_{t\ge 0}$ denote the gradient flow of $F$, i.e., $\dot \Sigma_t = -\nabla F(\Sigma_t)$. We observe that the gradient of $F$ can be written as an \emph{average}, hence
\begin{align*}
    \partial_t \sqrt{\lambda_{\min}(\Sigma_t)}
    &= -\langle \nabla \sqrt{\lambda_{\min}}(\Sigma_t), \nabla F(\Sigma_t)\rangle_{\Sigma_t}
    = \int \langle \nabla \sqrt{\lambda_{\min}}(\Sigma_t), \log_{\Sigma_t} \Sigma' \rangle_{\Sigma_t} \, \D Q(\Sigma')\,,
\end{align*}
see Fact~\ref{fact:grad_sq_dist} in Appendix~\ref{scn:facts}.
However, the geodesic concavity of $\sqrt{\lambda_{\min}}$ implies that
\begin{align*}
    \sqrt{\lambda_{\min}(\Sigma')}
    &\le \sqrt{\lambda_{\min}(\Sigma_t)} + \langle \nabla \sqrt{\lambda_{\min}}(\Sigma_t), \log_{\Sigma_t} \Sigma'\rangle_{\Sigma_t}
\end{align*}
and therefore
\begin{align*}
    \partial_t \sqrt{\lambda_{\min}(\Sigma_t)}
    \ge {\underbrace{\int\sqrt{\lambda_{\min}(\Sigma')} \, \D Q(\Sigma')}_{\eqqcolon \sqrt\alpha}} - \sqrt{\lambda_{\min}(\Sigma_t)}\,.
\end{align*}
This shows that as soon as $\lambda_{\min}(\Sigma_t)$ hits $\alpha$, then $\sqrt{\lambda_{\min}(\Sigma_t)}$ is increasing. Thus, the continuous-time gradient flow for $F$ always has eigenvalues at least $\alpha$ provided that it is initialized appropriately
and $\sqrt{\lambda_{\min}}$
is differentiable throughout its trajectory.

To summarize, the geodesic concavity of $\sqrt{\lambda_{\min}}$, together with the expression for the gradient of $F$ as an average of tangent vectors pointing towards matrices in the support of $Q$, yields eigenvalue control for the continuous-time gradient flow of $F$. This argument does not apply directly to the discrete-time GD updates, but nevertheless we show that the eigenvalues of the GD iterates can be controlled provided that the step size is taken sufficiently small; this is the content of Theorem~\ref{thm:sqrt_lambda_min_concave}.

\begin{proof}[Proof of Theorem~\ref{thm:sqrt_lambda_min_concave}]
    For $0 \le \eta \le 1$, let $\Sigma_\eta$ denote the generalized barycenter of the distribution $Q_\eta \deq (1-\eta) \, \delta_{\Sigma_0} + \eta \, Q$. If $\bar T$ denotes the average transport map
    \begin{align*}
        \bar T
        &\deq \int T_{\Sigma_0 \to \Sigma} \, \D Q(\Sigma)\,,
    \end{align*}
    then we have
    \begin{align*}
        \Sigma_\eta
        &= \bigl((1-\eta)\,I_d + \eta \, \bar T\bigr)\,\Sigma_0 \,\bigl((1-\eta)\,I_d + \eta \, \bar T\bigr)
        = {(1-\eta)^2} \,\Sigma_0 + \eta^2 \,\bar T \Sigma_0 \bar T + \eta \, (1-\eta) \, (\bar T \Sigma_0 + \Sigma_0 \bar T)\,.
    \end{align*}
    On the other hand, let $\gamma_\Sigma(\eta)$ denote the geodesic joining $\Sigma_0$ to $\Sigma$ at time $\eta$. Then,
    \begin{align*}
        \gamma_\Sigma(\eta)
        &= \bigl((1-\eta)\,I_d + \eta \, T_{\Sigma_0\to\Sigma}\bigr) \, \Sigma_0\,\bigl((1-\eta)\,I_d + \eta \, T_{\Sigma_0\to\Sigma}\bigr) \\
        &= {(1-\eta)}^2 \, \Sigma_0 + \eta^2 \, \Sigma + \eta \, (1-\eta) \, (T_{\Sigma_0\to\Sigma} \Sigma_0 + \Sigma_0 T_{\Sigma_0\to\Sigma})\,.
    \end{align*}
    Upon integrating w.r.t.\ $\D Q(\Sigma)$ and comparing the two expressions, we find that
    \begin{align*}
        \Sigma_\eta
        &= \int \gamma_\Sigma(\eta) \, \D Q(\Sigma) + \eta^2 \, \Bigl(\bar T\Sigma_0 \bar T - \int \Sigma \, \D Q(\Sigma)\Bigr)
        \succeq \int \gamma_\Sigma(\eta) \, \D Q(\Sigma) - \beta \eta^2 \, I_d\,.
    \end{align*}
    
    Next, using the geodesic concavity of $\sqrt{\lambda_{\min}}$ and Jensen's inequality,
    \begin{align*}
        \lambda_{\min}\Bigl(\int \gamma_\Sigma(\eta) \, \D Q(\Sigma)\Bigr)
        &\ge \int \lambda_{\min}\bigl(\gamma_\Sigma(\eta)\bigr) \, \D Q(\Sigma)
        \ge \int \bigl((1-\eta)\,\sqrt{\lambda_{\min}(\Sigma_0)} + \eta \,\sqrt{\lambda_{\min}(\Sigma)}\bigr)^2 \, \D Q(\Sigma) \\
        &\ge \Bigl(\int \bigl((1-\eta)\,\sqrt{\lambda_{\min}(\Sigma_0)} + \eta \,\sqrt{\lambda_{\min}(\Sigma)}\bigr) \, \D Q(\Sigma)\Bigr)^2 \\
        &\ge \bigl((1-\eta)\,\sqrt{\lambda_{\min}(\Sigma_0)} + \eta \,\sqrt{\alpha}\bigr)^2\,.
    \end{align*}
    We have established the inequality
    \begin{align*}
        \lambda_{\min}(\Sigma_\eta)
        &\ge \bigl((1-\eta)\,\sqrt{\lambda_{\min}(\Sigma_0)} + \eta \,\sqrt{\alpha}\bigr)^2 - \beta \eta^2\,.
    \end{align*}
    
    We now search for a value of $\lambda \ge 0$ such that if $\lambda_{\min}(\Sigma_0) \ge \lambda$, then $\lambda_{\min}(\Sigma_\eta) \ge \lambda$.
    From the above inequality, it suffices to have
    \begin{align*}
        \bigl((1-\eta)\,\sqrt{\lambda} + \eta \,\sqrt{\alpha}\bigr)^2 - \beta \eta^2 \overset{!}{\ge} \lambda\,.
    \end{align*}
    Rearranging this expression, we want
    \begin{align*}
        (\sqrt\alpha - \sqrt\lambda) \, \eta
        &\overset{!}{\ge} \sqrt{\lambda + \beta \eta^2} - \sqrt\lambda = \sqrt \lambda \, \Bigl(\sqrt{1+\frac{\beta \eta^2}{\lambda}} - 1\Bigr)\,.
    \end{align*}
    Applying the inequality $\sqrt{1+x} \le 1+x/2$, valid for $x\ge 0$, it suffices to have
    \begin{align*}
        (\sqrt \alpha - \sqrt\lambda) \, \eta
        \overset{!}{\ge} \frac{\beta \eta^2}{2\sqrt\lambda}\,.
    \end{align*}
    We now choose $\lambda = \alpha/4$, for which it can be verified that the above inequality holds for $\eta \le \frac{\alpha}{2\beta}$. This concludes the proof.
\end{proof}

\begin{rema}\label{rem:what_went_wrong}
    In an earlier version of this paper, we claimed that $-\sqrt{\lambda_{\min}}$ and $\sqrt{\lambda_{\max}}$ are convex along generalized geodesics, which is stronger than the statement of Theorem~\ref{thm:sqrt_lambda_min_concave}.
    Unfortunately, our proof of this claim was incorrect, as it relied upon~\cite[Corollary 3.5]{lawsonlim2001geometricmean} which is false as written.\footnote{The ``if'' direction of the corollary is incorrect: upon taking $B = I_d$, it says that $X \preceq B^{1/2}$ implies $X^2 \preceq B$, which contradicts the well-known fact that the square function is not operator monotone.} In fact, we have discovered a counterexample to our original claim: set
    \begin{align*}
        \Sigma_0 \deq \begin{bmatrix} 0.16 & 0.2 \\ 0.2 & 0.82 \end{bmatrix}\,, \qquad \Delta \deq \begin{bmatrix} 0.8 & 0.4 \\ 0.4 & 0.2 \end{bmatrix}\,, \qquad \Sigma \deq I_2 + \Sigma_0^{-1/2} \Delta \Sigma_0^{-1/2}\,.
    \end{align*}
    Let $Q \deq \frac{1}{2} \,\delta_{I_2} + \frac{1}{2} \, \delta_\Sigma$ and note that $\Sigma \succeq I_2$, i.e., $Q$ is supported on matrices with eigenvalues at least $1$.
    We can compute
    \begin{align*}
        T_{\Sigma_0 \to I_2}
        &= \Sigma_0^{-1/2}\,, \\
        T_{\Sigma_0\to \Sigma}
        &= \Sigma_0^{-1/2}\, \bigl(\Sigma_0^{1/2}\, (I_d + \Sigma_0^{-1/2} \Delta \Sigma_0^{-1/2})\,\Sigma_0^{1/2}\bigr)^{1/2}\, \Sigma_0^{-1/2}
        = \Sigma_0^{-1/2}\, (\Sigma_0+\Delta)^{1/2}\, \Sigma_0^{-1/2}\,, \\
        \bar T
        &= \frac{1}{2}\, \Sigma_0^{-1/2}\, \bigl(\Sigma_0^{1/2} + (\Sigma_0+\Delta)^{1/2}\bigr)\,\Sigma_0^{-1/2}\,,
    \end{align*}
    so that the generalized barycenter $\bar\Sigma$ of $Q$ at $\Sigma_0$ is
    \begin{align*}
        \bar\Sigma
        &=\Sigma_0^{-1/2}\,\Bigl(\frac{\Sigma_0^{1/2} + (\Sigma_0+\Delta)^{1/2}}{2}\Bigr)^2 \, \Sigma_0^{-1/2}\,.
    \end{align*}
    However, it can be numerically verified that $\lambda_{\min}(\bar\Sigma) \le 0.993 < 1$. This shows that the set of PSD matrices with eigenvalues at least $1$ is not closed under generalized geodesics.
    In particular, $-\sqrt{\lambda_{\min}}$ is an example of a functional which is convex along barycenters but not along generalized geodesics, which may be of interest in its own right. The revised statement of Theorem~\ref{thm:sqrt_lambda_min_concave} fixes this issue, at the cost of slightly worsening our quantitative results.
    
    We also remark that the above counterexample was obtained as follows. One can show that the statement
    \begin{center}
        the set of PSD matrices with eigenvalues at least $1$ is closed under generalized geodesics
    \end{center}
    is equivalent to the statement
    \begin{center}
        for all $\Sigma_0 \succ 0$ and all $A, B \succeq \Sigma_0$, it holds that
        \begin{align*}
            \Bigl( \frac{A^{1/2} + B^{1/2}}{2}\Bigr)^2 \succeq \Sigma_0\,.
        \end{align*}
    \end{center}
    The equivalence between the two statements is obtained by considering the generalized barycenter of the distribution $P \deq \frac{1}{2} \,\delta_{\Sigma_0^{-1/2} A \Sigma_0^{-1/2}} + \frac{1}{2} \,\delta_{\Sigma_0^{-1/2} B \Sigma_0^{-1/2}}$ at $\Sigma_0$. Therefore, we discovered our counterexample by finding a counterexample to the latter statement. Note also the similarity of the second statement with the last conjecture in~\cite{chakwo1985matrixineq}.
    In contrast, it was shown in~\cite[Lemma 10]{chewietal2020buresgd} that the set of matrices with eigenvalues \emph{at most} $\beta$ is convex along generalized geodesics.
\end{rema}

\subsection{Sharpness of Theorem~\ref{thm:geod_cvxty_result}}

We investigate the sharpness of this result in the following sense: for what exponents $p\in \R$ is it true that the functionals $-\lambda_{\min}^p$, $\lambda_{\max}^p$ are geodesically convex? For instance, the functional $\lambda_{\max}$ was shown to be geodesically convex in~\cite[Lemma 13]{chewietal2020buresgd}. 

In the following theorem, we show that the exponent $p=1/2$ in Theorem~\ref{thm:geod_cvxty_result} is optimal, in the sense that all possible geodesic convexity statements involving powers of $\lambda_{\min}$ and $\lambda_{\max}$ (except the trivial case $p=0$) can be deduced from the result for $p=1/2$.

\begin{thm}
    The following diagrams depict the exponents $p\in\R$ for which $\lambda_{\min}^p$ and $\lambda_{\max}^p$ are concave or convex.
    \begin{center}
    \begin{tikzpicture}
  \draw[<->] (3, 0) -- (-2, 0) node[left] {$p$};
  \node at (-2, 1) [left] {convex};
  \node at (-2, 2) [left] {concave};
  \node at (-2, 3) [left] {$\lambda_{\min}^p$};
  \node at (0, 0) [below] {$0$};
  \node at (1, 0) [below] {$\frac{1}{2}$};
  \draw[thick] (0, 2) -- (1, 2);
  \draw[->, thick] (0, 1) -- (-1.5, 1);
  \filldraw [black] (0,2) circle (2pt);
  \filldraw [black] (0,1) circle (2pt);
  \filldraw [black] (1,2) circle (2pt);
\end{tikzpicture}
\hfill
    \begin{tikzpicture}
  \draw[<->] (-2, 0) -- (3, 0) node[right] {$p$};
  \node at (3, 1) [right] {convex};
  \node at (3, 2) [right] {concave};
  \node at (3, 3) [right] {$\lambda_{\max}^p$};
  \node at (0, 0) [below] {$0$};
  \node at (1, 0) [below] {$\frac{1}{2}$};
  \draw[->, thick] (1, 1) -- (2.5, 1);
  \filldraw [black] (1,1) circle (2pt);
  \filldraw [black] (0,1) circle (2pt);
  \filldraw [black] (0,2) circle (2pt);
\end{tikzpicture}
\end{center}
The diagram is to be interpreted as follows. If part of the diagram is filled in with a solid black line, then the corresponding functional is geodesically concave/convex. If part of the diagram is \emph{not} filled in, then there exist counterexamples showing that the functional is \emph{not} geodesically concave/convex.
\end{thm}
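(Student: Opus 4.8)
The plan is to split the diagram into its ``solid'' part (positive convexity/concavity statements) and its ``unshaded'' part (counterexamples ruling out even the geodesic versions). The positive part will follow from Theorem~\ref{thm:sqrt_lambda_min_concave} by composing $\sqrt{\lambda_{\min}}$ and $\sqrt{\lambda_{\max}}$ with a power map, using the standard composition rules: a concave nondecreasing function of a concave function is concave; a convex nondecreasing function of a convex function, and a convex nonincreasing function of a concave function, are both convex. The negative part will use two explicit diagonal configurations, for which Fact~\ref{fact:diagonal} identifies the Bures--Wasserstein geodesic with the Euclidean segment between the square roots.

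For the solid part, fix a generalized geodesic ${(\Sigma_t^\nu)}_{t\in[0,1]}$ and set $h(t):=\sqrt{\lambda_{\min}(\Sigma_t^\nu)}$, $H(t):=\sqrt{\lambda_{\max}(\Sigma_t^\nu)}$; these are positive, and by Theorem~\ref{thm:sqrt_lambda_min_concave} the first is concave and the second convex. Since $\lambda_{\min}(\Sigma_t^\nu)^p=h(t)^{2p}$ and $\lambda_{\max}(\Sigma_t^\nu)^p=H(t)^{2p}$, I would record that on $(0,\infty)$ the map $s\mapsto s^{2p}$ is concave and nondecreasing when $2p\in[0,1]$, convex and nonincreasing when $2p\le 0$, and convex and nondecreasing when $2p\ge 1$. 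Feeding $h$ and $H$ into the composition rules then yields: $\lambda_{\min}^p$ is concave along generalized geodesics for $p\in[0,1/2]$; $\lambda_{\min}^p$ is convex along generalized geodesics for $p\le 0$; and $\lambda_{\max}^p$ is convex along generalized geodesics for $p\ge 1/2$. Adding the trivial case $p=0$, this is exactly the set of solid segments (the endpoint $p=1/2$ being Theorem~\ref{thm:sqrt_lambda_min_concave} itself), and since convexity/concavity along generalized geodesics implies the geodesic version, no stronger positive statement is possible.

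For the counterexamples I would use, first, the one-dimensional geodesic from $\Sigma_0=1$ to $\Sigma_1=4$, which by Fact~\ref{fact:diagonal} is $\Sigma_t=(1+t)^2$, so that $\lambda_{\min}(\Sigma_t)^p=\lambda_{\max}(\Sigma_t)^p=(1+t)^{2p}$; the second derivative of this function has the sign of $2p(2p-1)$, so it is strictly convex for $p\in(-\infty,0)\cup(1/2,\infty)$ and strictly concave for $p\in(0,1/2)$. This already rules out geodesic concavity of $\lambda_{\min}^p$ and of $\lambda_{\max}^p$ whenever $p\notin[0,1/2]$, and rules out geodesic convexity of both whenever $p\in(0,1/2)$. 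Second, I would use the two-dimensional geodesic from $\Sigma_0=\mathrm{diag}(4,1)$ to $\Sigma_1=\mathrm{diag}(1,4)$, namely $\Sigma_t=\mathrm{diag}((2-t)^2,(1+t)^2)$; along it $\lambda_{\min}$ takes the value $1$ at $t\in\{0,1\}$ and $9/4$ at $t=1/2$, while $\lambda_{\max}$ takes the value $4$ at $t\in\{0,1\}$ and $9/4$ at $t=1/2$. Comparing the midpoint value with the average of the endpoint values: when $p>0$, $\lambda_{\min}(\Sigma_{1/2})^p=(9/4)^p>1$ refutes geodesic convexity of $\lambda_{\min}^p$, and $\lambda_{\max}(\Sigma_{1/2})^p=(9/4)^p<4^p$ refutes geodesic concavity of $\lambda_{\max}^p$; when $p<0$ both inequalities reverse, refuting geodesic convexity of $\lambda_{\max}^p$ (and, redundantly, geodesic concavity of $\lambda_{\min}^p$). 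Together with the one-dimensional example these account for exactly the unshaded exponents in the two diagrams.

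I do not expect a serious obstacle: the content is entirely elementary once Theorem~\ref{thm:sqrt_lambda_min_concave} and Fact~\ref{fact:diagonal} are in hand. The only things needing care are the sign bookkeeping in the composition step and the verification that the two examples together leave no gap in the claimed exponent ranges; a minor convenience worth noting is that some ``not concave'' claims are automatic, e.g.\ for $p<0$ the function $\lambda_{\min}^p$ is convex and non-affine along the one-dimensional geodesic above, hence cannot be concave there.
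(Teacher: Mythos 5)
Your proof is correct and follows essentially the same route as the paper: composition rules applied to Theorem~\ref{thm:sqrt_lambda_min_concave} for the solid segments, a one-dimensional geodesic for one batch of counterexamples, and a two-dimensional ``cross'' geodesic between $\on{diag}(a,b)$ and $\on{diag}(b,a)$ for the remainder. The only cosmetic difference is that you use the concrete pair $\on{diag}(4,1)\leftrightarrow\on{diag}(1,4)$ and handle $\lambda_{\max}^p$ for $p<0$ directly via this example, whereas the paper takes $\on{diag}(\varepsilon,1/\varepsilon)\leftrightarrow\on{diag}(1/\varepsilon,\varepsilon)$ with $\varepsilon$ small and disposes of the $p<0$ case of $\lambda_{\max}^p$ by a composition-rule argument; both are sound.
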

\begin{proof}
    First, we establish the positive results, which follow from composition rules:
    \begin{itemize}
        \item For $0 \le p \le 1/2$, $\lambda_{\min}^p$ is the composition of the increasing concave function ${(\cdot)}^{2p}$ with the concave function $\sqrt{\lambda_{\min}}$, so it is concave.
        \item For $p\le 0$, $\lambda_{\min}^p$ is the composition of the decreasing convex function ${(\cdot)}^{2p}$ with the concave function $\sqrt{\lambda_{\min}}$, so it is convex.
        \item For $p\ge 1/2$, $\lambda_{\max}^p$ is the composition of the increasing convex function ${(\cdot)}^{2p}$ with the convex function $\sqrt{\lambda_{\max}}$, so it is convex.
    \end{itemize}
    Next, we turn towards the negative results. First, recall from Fact~\ref{fact:diagonal} in Appendix~\ref{scn:facts} that if $\Sigma_0$ and $\Sigma_1$ are one-dimensional, i.e., they are positive numbers, then the Bures--Wasserstein geodesic is
    \begin{align*}
        \Sigma_t
        &= {\bigl((1-t) \Sigma_0^{1/2} + t \Sigma_1^{1/2}\bigr)}^2\,, \qquad t\in [0,1]\,.
    \end{align*}
    Also, in this case, $\lambda_{\min}$ and $\lambda_{\max}$ coincide and equal the identity; we thus abuse notation slightly in this paragraph by writing $\lambda$ for both to handle the two cases simultaneously. Once we reparametrize by the square roots, it is seen that asking for concavity/convexity of $\lambda^p$ is equivalent to asking for usual convexity of ${(\cdot)}^{2p}$ on $\R_+$. This example rules out: (1) the concavity of $\lambda^p$ for $p < 0$; (2) the convexity of $\lambda^p$ for $0 < p < 1/2$; and (3) the concavity of $\lambda^p$ for $p > 1/2$.
    
    To rule out convexity of $\lambda_{\min}^p$ for $p > 0$, consider $\Sigma = \on{diag}(\varepsilon, 1/\varepsilon)$ for small $\varepsilon > 0$. The transport map from $\Sigma^{-1}$ to $\Sigma$ is $\Sigma$, so from~\eqref{eq:bures_geod} the midpoint of this geodesic is $M  \deq  (\Sigma + \Sigma^{-1} + 2I_2)/4 = (\varepsilon + \varepsilon^{-1} + 2) I_2 / 4$. In particular, this implies that $\lambda_{\min}(M) \geq 1/(4\varepsilon) \gg \varepsilon = \max\{\lambda_{\min}(\Sigma), \lambda_{\min}(\Sigma^{-1})\}$. Thus $\lambda_{\min}^p$ is not convex for any $p > 0$.

    To rule out concavity of $\lambda_{\max}^p$ for $p > 0$, note that for $\varepsilon$ sufficiently small, in the previous example $\lambda_{\max}(M) \approx 1/(4\varepsilon) \ll 1/\varepsilon = \max\{\lambda_{\max}(\Sigma), \lambda_{\max}(\Sigma^{-1})\}$. Also, for any $p < 0$, the convexity of $\lambda_{\max}^p$ would imply the concavity of $\lambda_{\max}^{-p}$ due to the composition rules, hence $\lambda_{\max}^p$ is not convex.
    
    This covers all cases.
\end{proof}

\subsection{Eigenvalue clipping is a Bures--Wasserstein contraction}

Convex sets play an important role in Euclidean optimization because projection onto a convex set is a contraction (c.f.~\cite[Lemma 3.1]{bubeck2015convex}), and hence projected gradient descent can be used to solve constrained optimization. Unfortunately, as the Bures--Wasserstein space is positively curved, we cannot automatically conclude that projection onto a geodesically convex set is a projection. Nevertheless, we can verify by hand the following result. In what follows, define for $0 < \beta < \infty$ the operator $\clip^\beta : \psd \to \psd$ in the following way: if $\Sigma = \sum_{i=1}^d \lambda_i u_i u_i^\T$ is an eigenvalue decomposition of $\Sigma$, then \[ \clip^\beta \Sigma  \deq  \sum_{i=1}^d (\lambda_i \wedge \beta) \, u_i u_i^\T\,.\]

\begin{prop}\label{thm:eig_clip_bures_contraction}
    The operator $\clip^\beta$ is a contraction w.r.t.\ the Bures--Wasserstein metric, i.e., $W_2(\clip^\beta \Sigma, \clip^\beta \Sigma') \le W_2(\Sigma, \Sigma')$.
\end{prop}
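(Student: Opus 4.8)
The plan is to reduce the claim to a one-dimensional (or, more precisely, a commuting) computation by exploiting the block structure that $\clip^\beta$ induces. First I would diagonalize $\Sigma = \sum_i \lambda_i u_i u_i^\T$ and write $\clip^\beta \Sigma = \sum_i (\lambda_i \wedge \beta) u_i u_i^\T$, and similarly for $\Sigma'$ with eigenbasis $(v_j)$. The key observation is that the Wasserstein distance between two centered Gaussians has the closed form~\eqref{eq:w2_formula}, $W_2^2(\Sigma,\Sigma') = \tr[\Sigma + \Sigma' - 2{(\Sigma^{1/2}\Sigma'\Sigma^{1/2})}^{1/2}]$, so it suffices to understand how clipping affects the ``overlap'' term $\tr {(\Sigma^{1/2}\Sigma'\Sigma^{1/2})}^{1/2}$, which by Fact~\ref{fact:diagonal}-type manipulations equals $\tr\GM(\Sigma,\Sigma')^{1/2}\cdots$ — more useful is the symmetric formulation: $\tr{(\Sigma^{1/2}\Sigma'\Sigma^{1/2})}^{1/2}$ is the optimal value $\sup\{\tr(UV^\T) : U U^\T = \Sigma,\ V V^\T = \Sigma'\}$ over factorizations, equivalently $\sup_{O \in O(d)} \tr(\Sigma^{1/2} O \Sigma'^{1/2})$. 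Thus
\begin{align*}
    W_2^2(\Sigma,\Sigma') = \inf_{O\in O(d)} \norm{\Sigma^{1/2} - O\Sigma'^{1/2}}_{\rm F}^2\,.
\end{align*}

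With this variational characterization in hand, the proof proceeds as follows. Fix an optimal $O$ for the pair $(\Sigma,\Sigma')$, so $W_2^2(\Sigma,\Sigma') = \norm{\Sigma^{1/2} - O\Sigma'^{1/2}}_{\rm F}^2$, and then bound $W_2^2(\clip^\beta\Sigma, \clip^\beta\Sigma') \le \norm{(\clip^\beta\Sigma)^{1/2} - O(\clip^\beta\Sigma')^{1/2}}_{\rm F}^2$ using the \emph{same} $O$ as a feasible (not necessarily optimal) choice. It then remains to show the entrywise-type inequality
\begin{align*}
    \norm{(\clip^\beta\Sigma)^{1/2} - O(\clip^\beta\Sigma')^{1/2}}_{\rm F}^2 \le \norm{\Sigma^{1/2} - O\Sigma'^{1/2}}_{\rm F}^2\,.
\end{align*}
Writing $A := \Sigma^{1/2}$ and $B := O\Sigma'^{1/2}$, note $(\clip^\beta\Sigma)^{1/2} = f(A A^\T)\, $-ish; cleaner is to observe that $(\clip^\beta \Sigma)^{1/2} = g_{\sqrt\beta}(\Sigma^{1/2})$ where $g_c(x) := x\wedge c$ applied in the spectral sense, i.e., $(\clip^\beta\Sigma)^{1/2} = \sum_i (\sqrt{\lambda_i}\wedge\sqrt\beta)\, u_i u_i^\T$, and likewise $O(\clip^\beta\Sigma')^{1/2} = O\, g_{\sqrt\beta}(\Sigma'^{1/2})$. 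So the claim is
\begin{align*}
    \norm{g_{\sqrt\beta}(P) - O\,g_{\sqrt\beta}(Q)}_{\rm F} \le \norm{P - O Q}_{\rm F}
\end{align*}
for symmetric PSD $P,Q$ and orthogonal $O$. This does \emph{not} hold for arbitrary $O$ — but it does hold because $O$ is the Wasserstein-optimal rotation, which forces $P Q^\T O^\T = O Q P$ to be symmetric PSD (the polar-decomposition optimality condition $\Sigma^{1/2} O \Sigma'^{1/2}\succeq 0$ after conjugation). The right move is: reduce to showing that the map $P \mapsto g_{\sqrt\beta}(P)$ is firmly nonexpansive / $1$-Lipschitz on symmetric matrices in Frobenius norm — which it is, since $x\mapsto x\wedge\sqrt\beta$ is $1$-Lipschitz and monotone, so by the standard operator-Lipschitz bound for Lipschitz scalar functions, $\norm{g_{\sqrt\beta}(P) - g_{\sqrt\beta}(Q)}_{\rm F}\le\norm{P-Q}_{\rm F}$ for symmetric $P,Q$ — and then absorb $O$ by using its optimality.

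Concretely: take $O^\star$ optimal for $(\Sigma,\Sigma')$, set $\tilde\Sigma' := O^\star \Sigma'^{1/2} O^{\star\T}$ (so $\tilde\Sigma'$ is a PSD matrix with the same eigenvalues as $\Sigma'^{1/2}$, hence with $g$ acting compatibly: $g_{\sqrt\beta}(\tilde\Sigma') = O^\star g_{\sqrt\beta}(\Sigma'^{1/2})O^{\star\T}$), and note $W_2^2(\Sigma,\Sigma') = \norm{\Sigma^{1/2} - \tilde\Sigma' \cdot(\text{sign issue})}$ — I would verify that the Wasserstein-optimal coupling lets us write $W_2^2(\Sigma,\Sigma') = \norm{\Sigma^{1/2} - R}_{\rm F}^2$ for a symmetric PSD matrix $R$ with the same spectrum as $\Sigma'^{1/2}$ (this is exactly the content of $(\Sigma^{1/2}\Sigma'\Sigma^{1/2})^{1/2}\succeq 0$). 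Then $W_2^2(\clip^\beta\Sigma,\clip^\beta\Sigma')\le \norm{g_{\sqrt\beta}(\Sigma^{1/2}) - g_{\sqrt\beta}(R)}_{\rm F}^2 \le \norm{\Sigma^{1/2} - R}_{\rm F}^2 = W_2^2(\Sigma,\Sigma')$, where the first inequality is feasibility of the coupling induced by the common rotation diagonalizing... — this needs care, since $\Sigma^{1/2}$ and $R$ need not commute. The honest route for the first inequality is: any pair of symmetric PSD matrices $(X,Y)$ gives a lower bound $W_2^2(\clip^\beta\Sigma,\clip^\beta\Sigma')\le \tr[\clip^\beta\Sigma + \clip^\beta\Sigma'] - 2\sup_{U\in O(d)}\tr(U(\clip^\beta\Sigma)^{1/2}U^\T(\clip^\beta\Sigma')^{1/2})$, and one compares the two sup's.

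\textbf{Main obstacle.} The crux — and where I expect to spend the most effort — is justifying the inequality $W_2(\clip^\beta\Sigma,\clip^\beta\Sigma')\le W_2(\Sigma,\Sigma')$ at the level of the \emph{quadratic forms}, because $\clip^\beta$ does not commute with $\Sigma^{1/2}$-vs-$\Sigma'^{1/2}$ simultaneously and the optimal rotations for the clipped and unclipped problems differ. The clean resolution I would pursue: use the $O(d)$-variational form $W_2^2(\Sigma,\Sigma') = \inf_{O}\norm{\Sigma^{1/2}-O\Sigma'^{1/2}}_{\rm F}^2$, plug the unclipped-optimal $O^\star$ into the clipped problem as a feasible point, and then prove the purely matrix-analytic lemma that $\norm{h(\Sigma^{1/2}) - O^\star h(\Sigma'^{1/2})}_{\rm F}\le\norm{\Sigma^{1/2} - O^\star\Sigma'^{1/2}}_{\rm F}$ for the $1$-Lipschitz monotone map $h(x)=x\wedge\sqrt\beta$, using that $O^{\star\T}\Sigma^{1/2}O^\star$ and $\Sigma'^{1/2}$ have a common... no — using instead that at the optimum $\Sigma^{1/2}O^\star\Sigma'^{1/2}$ is symmetric PSD, which via the integral representation $h(x) = \int_0^\infty (\mathbf 1\{x>s\} - \mathbf 1\{x>s,\,s\ge\sqrt\beta\})\,ds$... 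Ultimately I believe the slickest proof is the one based on the formula $W_2^2(\Sigma,\Sigma')=\tr\Sigma+\tr\Sigma'-2\tr(\Sigma^{1/2}\Sigma'\Sigma^{1/2})^{1/2}$ combined with the monotonicity $\tr(\clip^\beta\Sigma)^{1/2}\Sigma'(\clip^\beta\Sigma)^{1/2})^{1/2}$ bounds — so the real work is assembling the correct chain of operator-monotone/operator-concave inequalities (e.g.\ using that $A\mapsto A^{1/2}$ is operator monotone and operator concave, and that $\clip^\beta$ decreases $\tr\Sigma$ by exactly $\sum_i(\lambda_i-\beta)_+$) so that the decrease in the $\tr\Sigma+\tr\Sigma'$ terms dominates the possible decrease in the cross term. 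I would double-check tightness on $2\times 2$ diagonal examples before committing to the argument.
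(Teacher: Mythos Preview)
Your overall strategy---use the variational form $W_2^2(\Sigma,\Sigma') = \inf_{O\in O(d)} \|\Sigma^{1/2} - O\Sigma'^{1/2}\|_{\rm F}^2$, plug the unclipped optimizer $O^\star$ into the clipped problem as a feasible point, and reduce to a $1$-Lipschitz statement for the scalar map $h(x)=x\wedge\sqrt\beta$---is exactly the paper's route. The gap is in the matrix-analytic lemma, where you turn in circles.

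Specifically, you assert that $\|h(P) - O\,h(Q)\|_{\rm F}\le\|P - OQ\|_{\rm F}$ ``does \emph{not} hold for arbitrary $O$'' and then try to rescue it via the optimality condition $\Sigma^{1/2}O^\star\Sigma'^{1/2}\succeq 0$. This is the wrong diagnosis: the inequality \emph{does} hold for every orthogonal $O$, and optimality plays no role. The clean way to see it---and this is the paper's key lemma---is to extend the clipping operator to all of $\R^{d\times d}$ via the singular value decomposition: if $A=\sum_i s_i u_i v_i^\T$, set $\clip^{\sqrt\beta}A:=\sum_i (s_i\wedge\sqrt\beta)\,u_i v_i^\T$. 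Then a short computation shows $\clip^{\sqrt\beta}(OQ)=O\,h(Q)$ and $\clip^{\sqrt\beta}(P)=h(P)$, so your inequality becomes $\|\clip^{\sqrt\beta}P-\clip^{\sqrt\beta}(OQ)\|_{\rm F}\le\|P-OQ\|_{\rm F}$. This follows immediately once you observe that $\clip^{\sqrt\beta}$ is precisely the Frobenius-norm projection onto the convex set $\{A\in\R^{d\times d}:\|A\|_{\rm op}\le\sqrt\beta\}$, hence $1$-Lipschitz. (Verifying the projection claim is a two-line calculation after applying the unitary invariance of $\|\cdot\|_{\rm F}$ to rotate $A$ to its singular-value diagonal.)

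Once you have this, the proof is three lines and needs none of the polar-decomposition optimality, the integral representation of $h$, or the operator-monotone bookkeeping you sketch at the end. The moral: your instinct to invoke ``operator-Lipschitz for $1$-Lipschitz scalar functions on symmetric matrices'' was right, but the presence of $O$ takes you outside the symmetric world; the fix is not to exploit optimality of $O$ but to extend the functional calculus to general matrices via the SVD.
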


To prove this proposition, we first extend the clipping operation to an operator $\R^{d\times d} \to \R^{d\times d}$ via the singular values; namely, given a singular value decomposition $A = \sum_{i=1}^d s_i u_i v_i^\T$, we let $\clip^\beta A  \deq  \sum_{i=1}^d (s_i \wedge \beta) \, u_i v_i^\T$.

\begin{proof}[Proof of Proposition~\ref{thm:eig_clip_bures_contraction}]
    Fix $X, Y \in \psd$.
    It is known (see, e.g.,~\cite{bhatiajainlim2019bures}) that
    \begin{align*}
        W_2(X, Y)
        &= \min_{\substack{A, B \in \R^{d\times d} \\ AA^\T = X \\ BB^\T = Y}}{\norm{A-B}_{\rm F}}\,.
    \end{align*}
    Let $(\bar A, \bar B)$ be a minimizing pair in the above expression.
    We aim to show
    \begin{align*}
        W_2(\clip^\beta X, \clip^\beta Y)
        &\le \norm{\clip^{\sqrt \beta} \bar A - \clip^{\sqrt \beta} \bar B}_{\rm F}
        \overset{?}{\le} \norm{\bar A - \bar B}_{\rm F}
        = W_2(X, Y)\,.
    \end{align*}
    We only have to show the second inequality, and we do so by showing that the operator $\clip^M : \R^{d\times d} \to \R^{d\times d}$ satisfies
    \begin{align}\label{eq:clip_proj}
        \clip^M A
        &= \argmin_{\tilde A \in \R^{d\times d}, \; \norm{\tilde A} \le M}{\norm{A-\tilde A}_{\rm F}}\,, \qquad A \in \R^{d\times d}\,.
    \end{align}
    This will prove that $\clip^M$ is the Euclidean \emph{projection} onto the closed convex set $\{\norm \cdot \le M\}$, and such a projection is automatically $1$-Lipschitz.

    Indeed, showing~\eqref{eq:clip_proj} is standard.
    Write $A = U\Sigma V^\T$ for its singular value decomposition.
    \begin{align*}
        &\argmin_{\tilde A\in\R^{d\times d}, \; \norm{\tilde A} \le M}{\norm{\tilde A - A}_{\rm F}^2}
        = \argmin_{\tilde A \in \R^{d\times d}, \; \norm{\tilde A} \le M}{\norm{\tilde A - U\Sigma V^\T}_{\rm F}^2}
        = \argmin_{\tilde A \in \R^{d\times d}, \; \norm{\tilde A} \le M}{\norm{U^\T \tilde AV - \Sigma}_{\rm F}^2} \\
        &\qquad = \argmin_{\tilde A \in \R^{d\times d}, \; \norm{\tilde A} \le M} \Bigl\{\sum_{i=1}^d {\{\Sigma[i,i] - (U^\T \tilde A V)[i,i]\}}^2 + \sum_{\substack{i,j\in [d] \\ i \ne j}} {(U^\T \tilde A V)[i,j]}^2 \Bigr\}\,.
    \end{align*}
    On the other hand,
    \begin{align*}
        &\min_{\tilde A \in \R^{d\times d}, \; \norm{\tilde A} \le M} \Bigl\{\sum_{i=1}^d {\{\Sigma[i,i] - (U^\T \tilde A V)[i,i]\}}^2 + \sum_{i,j\in [d], \; i \ne j} {(U^\T \tilde A V)[i,j]}^2 \Bigr\} \\
        &\qquad \ge \sum_{i=1}^d {\{{(\Sigma[i,i] - M)}_+\}}^2\,,
    \end{align*}
    with equality attained at the unique minimizer $\tilde A$ satisfying $U^\T \tilde A V = \clip^M \Sigma$, i.e., $\tilde A = \clip^M A$.
\end{proof}

\section{Proofs for barycenters}

\subsection{Riemannian gradient descent}\label{scn:bures_gd}

In this section, we detail the obstacles faced by previous analyses
and then show how our geometric result,
Theorem~\ref{thm:sqrt_lambda_min_concave}, enables us to overcome the
prior exponential dependence on dimension and obtain the dimension-free rates in Theorems~\ref{thm:bures_gd} and~\ref{thm:bures_avg_case}.

We begin by recalling the proof strategy of~\cite{chewietal2020buresgd}. Let $F$ denote the barycenter functional,
\begin{align}\label{eq:bary_functional}
    F(\Sigma)
    & \deq  \frac{1}{2} \int W_2^2(\Sigma,\cdot) \, \D P\,.
\end{align}
Standard optimization guarantees are often proven under the assumption that the objective function $F$ is smooth and convex. Since we are considering Riemannian descent, this should be interpreted as convex and smooth along geodesics, as in~\cite{zhangsra2016geodesicallycvx}. Unfortunately, the functional $F$ is not geodesically convex (see~\cite[Appendix B.2]{chewietal2020buresgd}), and so we must look for weaker conditions which still imply convergence of GD/SGD. A gradient domination condition known as the \emph{Polyak--\L{}ojasiewicz inequality} (henceforth \emph{PL inequality}) was introduced in the non-convex optimization literature as an appropriate substitute for strong convexity~\cite{kariminutinischmidt2016pl}, and it plays a key role in the analysis.

The following properties of the barycenter functional were proven in~\cite{chewietal2020buresgd}.

\begin{thm}\label{thm:properties_of_bary_fn}
    Let $0 < \lambda_{\min} \le \lambda_{\max} <\infty$ and write $\kappa  \deq  \lambda_{\max}/\lambda_{\min}$.
    \begin{enumerate}
        \item (\cite[Theorem 7]{chewietal2020buresgd}) The barycenter functional $F$ is $1$-geodesically smooth.
        \item (\cite[Theorem 17]{chewietal2020buresgd}) Assume that the covariance matrices in the support of $P$ have eigenvalues in the range $[\lambda_{\min},\lambda_{\max}]$. Then, $F$ satisfies a \emph{variance inequality},
        \begin{align*}
            F(\Sigma) - F(\Sigstar)
            &\ge \frac{1}{2\kappa} \, W_2^2(\Sigma, \Sigstar)\,, \qquad\text{for all}~\Sigma \in \psd\,.
        \end{align*}
        \item (\cite[Theorem 19]{chewietal2020buresgd}) Assume that the covariance matrices in the support of $P$, as well as $\Sigma$ itself, have eigenvalues in the range $[\lambda_{\min},\lambda_{\max}]$.
        Then, $F$ satisfies a PL inequality at the matrix $\Sigma$:
        \begin{align*}
            F(\Sigma) - F(\Sigstar)
            &\le 2\kappa^2 \, \norm{\nabla F(\Sigma)}_\Sigma^2\,.
        \end{align*}
    \end{enumerate}
\end{thm}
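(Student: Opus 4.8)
The plan is to obtain all three properties from Hessian-type bounds on the single-source functional $g_\sigma(\Sigma):=W_2^2(\Sigma,\sigma)$, and then average over $\sigma\sim P$ using $F=\tfrac12\int g_\sigma\,\D P(\sigma)$. The only preliminary computation is the gradient: by Fact~\ref{fact:grad_sq_dist} (or directly by matrix calculus from~\eqref{eq:w2_formula}--\eqref{eq:gaussian_ot_map}),
\[
  \nabla g_\sigma(\Sigma) = -2\log_\Sigma\sigma = -2\,(T_{\Sigma\to\sigma}-I_d),
  \qquad\text{hence}\qquad
  \nabla F(\Sigma) = I_d - \int T_{\Sigma\to\sigma}\,\D P(\sigma)\,.
\]
In particular $\nabla F(\Sigstar)=0$ is the stationarity identity $\int T_{\Sigstar\to\sigma}\,\D P(\sigma)=I_d$, equivalently $\int\log_{\Sigstar}\sigma\,\D P(\sigma)=0$; this is what will let us discard the first-order terms once we integrate the pointwise inequalities below, and it is essentially the only place where the specific point $\Sigstar$ enters.

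For Part~(1), I would prove that $g_\sigma$ is $2$-geodesically smooth: along every constant-speed Bures--Wasserstein geodesic $(\Sigma_t)_{t\in[0,1]}$ one has $\partial_t^2 g_\sigma(\Sigma_t)\le 2\,W_2^2(\Sigma_0,\Sigma_1)$, whence $F=\tfrac12\int g_\sigma\,\D P$ is $1$-geodesically smooth by averaging. This is the ``smoothness'' half of the Euclidean fact that $d^2(\cdot,\sigma)$ is simultaneously $2$-strongly convex and $2$-smooth: in a space of nonnegative Alexandrov curvature the strong convexity can fail --- which is exactly why $F$ is non-convex (Appendix~\ref{scn:curv_bary}) --- but Hessian comparison against the flat model still gives $\operatorname{Hess}(\tfrac12 g_\sigma)\preceq I_d$, and this integrates to the global smoothness inequality. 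Alternatively one checks it directly: plug the geodesic formula~\eqref{eq:bures_geod} into $W_2^2(\Sigma_t,\sigma)=\tr\Sigma_t+\tr\sigma-2\tr(\Sigma_t^{1/2}\sigma\Sigma_t^{1/2})^{1/2}$, differentiate twice in $t$, and bound the resulting trace.

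For Parts~(2) and~(3), which share the same machinery, the goal is to produce \emph{quantitative proxies} for the (failing) convexity inequalities, with constants depending only on $\kappa$. For Part~(2) the target is a pointwise secant-type bound, valid for all $\Sigma\in\psd$ when $\sigma$ and $\Sigstar$ have eigenvalues in $[\lambda_{\min},\lambda_{\max}]$, of the shape $g_\sigma(\Sigma)\ge g_\sigma(\Sigstar)+\langle\nabla g_\sigma(\Sigstar),\log_{\Sigstar}\Sigma\rangle_{\Sigstar}+\tfrac1\kappa\,W_2^2(\Sigma,\Sigstar)$; in a nonpositively curved space the coefficient would be $1$, and the content is that $\kappa$-conditioning of $\sigma$ and $\Sigstar$ keeps it at least $1/\kappa$. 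The mechanism is the comparability, up to condition-number factors, of $W_2$ with the flattened distance $\norm{\Sigma^{1/2}-\sigma^{1/2}}_{\rm F}$ (exact for commuting matrices, Fact~\ref{fact:diagonal}), together with the identity $T_{\Sigstar\to\sigma}=\GM(\Sigstar^{-1},\sigma)$ and operator monotonicity of $t\mapsto t^{1/2}$; one also uses that $\Sigstar$ inherits the eigenvalue range of $\supp P$ (from the barycenter fixed-point equation, or the remark after Theorem~\ref{thm:sqrt_lambda_min_concave}). Integrating this bound against $P$ kills the middle term, since $\int\nabla g_\sigma(\Sigstar)\,\D P(\sigma)=2\nabla F(\Sigstar)=0$, leaving $2\,(F(\Sigma)-F(\Sigstar))\ge\tfrac1\kappa W_2^2(\Sigma,\Sigstar)$. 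For Part~(3) I would then follow the ``quadratic growth plus proxy convexity implies PL'' route: combine (i) a proxy-convexity bound $F(\Sigma)-F(\Sigstar)\le C(\kappa)\,\langle\nabla F(\Sigma),\log_\Sigma\Sigstar\rangle_\Sigma$ for $\kappa$-conditioned $\Sigma$, proved by the same flattening/geometric-mean arguments, (ii) Cauchy--Schwarz, $\langle\nabla F(\Sigma),\log_\Sigma\Sigstar\rangle_\Sigma\le\norm{\nabla F(\Sigma)}_\Sigma\,W_2(\Sigma,\Sigstar)$, and (iii) the variance inequality $W_2(\Sigma,\Sigstar)\le\sqrt{2\kappa\,(F(\Sigma)-F(\Sigstar))}$ from Part~(2); solving the resulting quadratic inequality and collecting $\kappa$-factors yields $F(\Sigma)-F(\Sigstar)\le 2\kappa^2\,\norm{\nabla F(\Sigma)}_\Sigma^2$.

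The main obstacle is precisely what forces the ``proxy'' language in Parts~(2) and~(3): since $F$ is genuinely geodesically non-convex, the secant and convexity inequalities cannot be read off from a curvature sign and must be established directly in weakened form, and the delicate point is to show that the degradation is controlled \emph{only} by $\kappa$ and is free of the dimension $d$. This is where the closed-form structure of Gaussian optimal transport is indispensable --- the explicit formulas~\eqref{eq:w2_formula}--\eqref{eq:gaussian_ot_map}, the $\kappa$-comparability of $W_2$ with $\norm{\cdot^{1/2}-\cdot^{1/2}}_{\rm F}$ under conditioning, and operator monotonicity of the square root are all dimension-free --- whereas the generic estimates available for abstract nonnegatively curved metric spaces would not deliver dimension-free constants.
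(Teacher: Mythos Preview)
Note first that the paper does not prove this theorem itself: all three parts are quoted from \cite{chewietal2020buresgd}, so the comparison is against those cited arguments as they can be reconstructed from the paper's later re-use of them (Lemma~\ref{lem:median_pl}, Lemma~\ref{lem:ent_PL}, proof of Theorem~\ref{thm:bures_avg_case}). Your plan for Parts (1) and (2) matches the cited proofs; for (2), the pointwise secant bound you write is exactly \cite[Theorem 6]{chewietal2020buresgd}, whose constant is $\lambda_{\min}(T_{\Sigstar\to\sigma})\ge 1/\kappa$, and integration together with $\nabla F(\Sigstar)=0$ kills the first-order term as you say.

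For Part (3) the overall structure --- upper-bound $F-F(\Sigstar)$ by $\norm{\nabla F}\cdot W_2$, then close with the variance inequality from (2) --- is correct and is how the cited proof proceeds, but your step (i) is not quite right. First, there is a sign error: $\langle\nabla F(\Sigma),\log_\Sigma\Sigstar\rangle_\Sigma$ is the directional derivative of $F$ towards the minimizer and is generically negative, so the inequality as written fails for any $C(\kappa)>0$; you mean $-\log_\Sigma\Sigstar$. Second, even with the sign fixed, a proxy-convexity bound based at the \emph{arbitrary} point $\Sigma$ does not follow from ``the same flattening/geometric-mean arguments'' you used for (2): there the argument worked because the first-order term, being based at $\Sigstar$, vanished upon $P$-integration, and that mechanism is unavailable here. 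The actual argument in \cite{chewietal2020buresgd} (visible in this paper's Lemma~\ref{lem:median_pl}) uses Kantorovich duality to obtain $F(\Sigma)-F(\Sigstar)\le W_2(\Sigma,\Sigstar)\int_0^1\norm{\nabla F(\Sigma)}_{\Sigma_s}\,\D s$ along the geodesic $(\Sigma_s)$ from $\Sigma$ to $\Sigstar$, with the extra factor of $\kappa$ over (2) entering via $\norm{\nabla F(\Sigma)}_{\Sigma_s}^2\le\kappa\,\norm{\nabla F(\Sigma)}_\Sigma^2$. An equivalent and perhaps cleaner replacement for your (i), used in the paper's own Lemma~\ref{lem:ent_PL}, is the \emph{Euclidean} convexity $F(\Sigma)-F(\Sigstar)\le\tfrac12\langle\nabla F(\Sigma),\Sigma-\Sigstar\rangle$ combined with $\norm{\Sigma-\Sigstar}_{\Sigma^{-1}}\lesssim\sqrt\kappa\,W_2(\Sigma,\Sigstar)$ (Remark~\ref{rema:w2_vs_frob}); your steps (ii)--(iii) then finish the proof with the stated constant $2\kappa^2$.
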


Geodesic smoothness together with a PL inequality at every iterate are enough to obtain convergence guarantees for GD/SGD in objective value (i.e., the quantity $F(\Sigma) - F(\Sigstar)$), c.f.~\cite[Theorems 4-5]{chewietal2020buresgd}. The variance inequality is then used to deduce convergence of the iterate to $\Sigstar$.

The main difficulty when applying these results is the assumption required for the third point: it requires \emph{a priori} control over the eigenvalues of the iterates of GD/SGD.

This difficulty is addressed in~\cite{chewietal2020buresgd} via the following strategy: identify a geodesically convex subset $\mc S$ of the Bures--Wasserstein manifold for which we can prove uniform bounds on the eigenvalues of matrices in $\mc S$. Since the iterates of SGD travel along geodesics, if $P$ is supported in $\mc S$ and the algorithm is initialized in $\mc S$, it follows that all iterates of SGD will remain in $\mc S$. The situation is similar for GD, except that ``geodesics'' must be replaced by ``generalized geodesics''.

We can now describe the source of the exponential dependence on dimension in the result of~\cite{chewietal2020buresgd}: if the covariance matrices in the support of $P$ have eigenvalues in $[\lambda_{\min},\lambda_{\max}]$, then the subset $\mc S$ used in the analysis of Chewi et al.\ is substantially larger than the support of $P$, and in particular the eigenvalues of matrices in $\mc S$ can only be proven to lie in the range $[\lambda_{\min}/\kappa^{d-1}, \lambda_{\max}]$. The main improvement in the present analysis is to use our geometric result (Theorem~\ref{thm:sqrt_lambda_min_concave}) to prove the following result.

\begin{lemma}\label{lem:eigvals_of_iterates}
    Suppose that the covariance matrices in the support of $P$ have eigenvalues in the range $[\lambda_{\min},\lambda_{\max}]$, and that we initialize GD (respectively SGD) at a point in $\supp P$. Then, the iterates of GD with step size at most $\frac{1}{2\kappa}$ (respectively SGD) also have eigenvalues in the range $[\lambda_{\min}/4,\lambda_{\max}]$ (respectively $[\lambda_{\min}, \lambda_{\max}]$).
\end{lemma}
\begin{proof}
    The result for SGD follows because SGD moves along geodesics and the set of matrices with eigenvalues in $[\lambda_{\min},\lambda_{\max}]$ is geodesically convex (Theorem~\ref{thm:geod_cvxty_result}). For GD, we instead invoke the generalized geodesic convexity of $\lambda_{\max}$ (see~\cite[Lemma 10]{chewietal2020buresgd}) together with Theorem~\ref{thm:sqrt_lambda_min_concave}.
\end{proof}

This combined with the arguments below is enough to alleviate the exponential dimension dependence. However, before continuing to the main argument, we prove sharper bounds for the last two statements of Theorem~\ref{thm:properties_of_bary_fn}. This allows us to also improve our convergence rates' dependence on the conditioning. 

This improved version of Theorem~\ref{thm:properties_of_bary_fn} rests on the following observation.~\cite[Lemma 16]{chewietal2020buresgd} shows that if $\Sigma$, $\Sigma'$ have eigenvalues which lie in the range $[\lambda_{\min}, \lambda_{\max}]$, then the eigenvalues of the transport map $T_{\Sigma\to\Sigma'}$ lie in $[\kappa^{-1}, \kappa]$. However, these bounds are loose, as following lemma shows.

\begin{lemma}\label{lem:transport_map_reg}
    Suppose that $\Sigma,\Sigma' \in \psd$ have eigenvalues which lie in the range $[\lambda_{\min}, \lambda_{\max}]$, and let $\kappa  \deq  \lambda_{\max}/\lambda_{\min}$ denote the condition number.
    Then, the eigenvalues of the transport map $T_{\Sigma\to\Sigma'}$ lie in the range $[1/\sqrt \kappa, \sqrt \kappa]$.
\end{lemma}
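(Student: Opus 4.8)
The plan is to avoid manipulating the explicit formula~\eqref{eq:gaussian_ot_map} directly, and instead to exploit two structural facts about $T := T_{\Sigma\to\Sigma'}$: first, that $T$ is a \emph{symmetric positive definite} matrix --- immediate from~\eqref{eq:gaussian_ot_map}, since it has the form $\Sigma^{-1/2} C \Sigma^{-1/2}$ with $C := {(\Sigma^{1/2}\Sigma'\Sigma^{1/2})}^{1/2} \succ 0$ --- and second, that it satisfies the transport identity $T\Sigma T = \Sigma'$ recorded in Appendix~\ref{scn:bures_geometry}. Because $T$ is symmetric, it admits an orthonormal eigenbasis $v_1,\dots,v_d$ with eigenvalues $\mu_1,\dots,\mu_d > 0$, so it suffices to bound each $\mu_i$ individually.

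Fix $i$. Writing $T v_i = \mu_i v_i$ and using the symmetry of $T$ together with $T\Sigma T = \Sigma'$, one computes
\begin{align*}
    \mu_i^2 \, \langle v_i, \Sigma v_i\rangle
    = \langle \mu_i v_i, \Sigma \, \mu_i v_i\rangle
    = \langle T v_i, \Sigma \, T v_i\rangle
    = \langle v_i, T\Sigma T \, v_i\rangle
    = \langle v_i, \Sigma' v_i\rangle\,.
\end{align*}
Now bound the two Rayleigh quotients using the hypothesis on the eigenvalues of $\Sigma$ and $\Sigma'$: $\lambda_{\min} \le \langle v_i, \Sigma v_i\rangle \le \lambda_{\max}$ and likewise $\lambda_{\min} \le \langle v_i, \Sigma' v_i\rangle \le \lambda_{\max}$. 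Rearranging the displayed identity then gives
\begin{align*}
    \mu_i^2 = \frac{\langle v_i, \Sigma' v_i\rangle}{\langle v_i, \Sigma v_i\rangle} \in \Bigl[\frac{\lambda_{\min}}{\lambda_{\max}}, \, \frac{\lambda_{\max}}{\lambda_{\min}}\Bigr] = [\kappa^{-1}, \kappa]\,,
\end{align*}
so $\mu_i \in [1/\sqrt\kappa, \sqrt\kappa]$, which is the desired conclusion.

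I do not anticipate any genuine obstacle: once the correct identity is invoked, this is a two-line Rayleigh-quotient computation, and it automatically improves on the bound $[\kappa^{-1},\kappa]$ from~\cite[Lemma 16]{chewietal2020buresgd}. The only thing to be careful about is to work through the transport identity $T\Sigma T = \Sigma'$ rather than attempting to analyze the nested square roots in~\eqref{eq:gaussian_ot_map}; one could equivalently phrase the same argument via the Courant--Fischer min--max characterization of the eigenvalues of $T$. As a sanity check on sharpness, taking $\Sigma = \lambda_{\min} I_d$ and $\Sigma' = \lambda_{\max} I_d$ yields $T = \sqrt\kappa \, I_d$, so the bound cannot be improved in general.
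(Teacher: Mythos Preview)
Your proof is correct and takes a genuinely different route from the paper's. The paper recognizes $T_{\Sigma\to\Sigma'} = \GM(\Sigma^{-1},\Sigma')$ as a matrix geometric mean and then invokes an external norm inequality due to Bhatia--Grover~\cite[Theorem 3]{bhatiagrover2012normineq} to bound $\lambda_{\max}(T_{\Sigma\to\Sigma'}) \le \lambda_{\max}(\Sigma'^{1/4}\Sigma^{-1/2}\Sigma'^{1/4}) \le \sqrt\kappa$, and then obtains the lower bound from the inverse relation $T_{\Sigma'\to\Sigma} = T_{\Sigma\to\Sigma'}^{-1}$. Your argument is entirely self-contained: you work directly from the transport identity $T\Sigma T = \Sigma'$ and a Rayleigh-quotient computation along each eigenvector of $T$, which yields both bounds simultaneously without any appeal to the geometric-mean literature. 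Your approach is more elementary and arguably more transparent; the paper's approach, on the other hand, situates the result within the broader theory of matrix means, which is thematically consistent with how the paper treats $T_{\Sigma\to\Sigma'}$ elsewhere.
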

\begin{proof}
    The transport map $T_{\Sigma\to\Sigma'}$ is explicitly given in~\eqref{eq:gaussian_ot_map}, and it can be recognized as the matrix geometric mean of $\Sigma^{-1}$ and $\Sigma'$. Applying a norm bound for the matrix geometric mean~\cite[Theorem 3]{bhatiagrover2012normineq}, we deduce that
    \begin{align*}
        \lambda_{\max}(T_{\Sigma\to\Sigma'})
        &\le \lambda_{\max}(\Sigma^{' \, 1/4} \Sigma^{-1/2} \Sigma^{' \, 1/4})
        \le \sqrt{\kappa}\,.
    \end{align*}
    The symmetry of $\Sigma$ and $\Sigma'$ together with Fact~\ref{fact:inv_transport_map} in Appendex~\ref{scn:facts} yields $\lambda_{\min}(T_{\Sigma\to\Sigma'}) \ge 1/\sqrt\kappa$.
\end{proof}

Using this lemma, we now state and prove the refinement of Theorem~\ref{thm:properties_of_bary_fn}.

\begin{thm}\label{thm:better_properties_of_bary_fn}
    Let $0 < \lambda_{\min} \le \lambda_{\max} <\infty$ and write $\kappa  \deq  \lambda_{\max}/\lambda_{\min}$.
    \begin{enumerate}
        \item (\cite[Theorem 7]{chewietal2020buresgd}) The barycenter functional $F$ is $1$-geodesically smooth.
        \item Assume that the covariance matrices in the support of $P$ have eigenvalues in the range $[\lambda_{\min},\lambda_{\max}]$. Then, $F$ satisfies a \emph{variance inequality},
        \begin{align*}
            F(\Sigma) - F(\Sigstar)
            &\ge \frac{1}{2\sqrt \kappa} \, W_2^2(\Sigma, \Sigstar)\,, \qquad\text{for all}~\Sigma \in \psd\,.
        \end{align*}
        \item Assume that the covariance matrices in the support of $P$ have eigenvalues in the range $[\lambda_{\min},\lambda_{\max}]$.
        Then, $F$ satisfies a PL inequality at the matrix $\Sigma$:
        \begin{align*}
            F(\Sigma) - F(\Sigstar)
            &\le 2\sqrt\kappa\,\frac{\lambda_{\max}}{\lambda_{\min}(\Sigma)} \, \norm{\nabla F(\Sigma)}_\Sigma^2\,.
        \end{align*}
    \end{enumerate}
\end{thm}
\begin{proof}
    The second statement follows from the general variance inequality (\cite[Theorem 6]{chewietal2020buresgd}) together with Lemma~\ref{lem:transport_map_reg}. Similarly, the third statement follows from the proof of~\cite[Theorem 19]{chewietal2020buresgd} using the improved variance inequality.
\end{proof}

We can now prove Theorem~\ref{thm:bures_gd}.

\begin{proof}[Proof of Theorem~\ref{thm:bures_gd}]
    The proof for SGD follows from~\cite[Theorem 5]{chewietal2020buresgd}. For GD, we first note that from Lemma~\ref{lem:eigvals_of_iterates} and Theorem~\ref{thm:better_properties_of_bary_fn}, we have the PL inequality
    \begin{align*}
        F(\Sigma_t^{\rm GD}) - F(\Sigstar)
        &\le 8\kappa^{3/2} \, \norm{\nabla F(\Sigma_t^{\rm GD})}_{\Sigma_t^{\rm GD}}^2
    \end{align*}
    at any GD iterate $\Sigma_t^{\rm GD}$.
    Also, from the $1$-smoothness of the barycenter functional, we obtain the descent lemma
    \begin{align*}
        F(\Sigma_{t+1}^{\rm GD}) - F(\Sigma_t^{\rm GD})
        &\le -\eta \, \bigl(1 -\frac{\eta}{2}\bigr) \, \norm{\nabla F(\Sigma_t^{\rm GD})}_{\Sigma_t^{\rm GD}}^2\,.
    \end{align*}
    With our step size choice $\eta = \frac{1}{2\kappa}$, this becomes
    \begin{align*}
        F(\Sigma_{t+1}^{\rm GD}) - F(\Sigma_t^{\rm GD})
        &\le -\frac{3}{8\kappa}\, \norm{\nabla F(\Sigma_t^{\rm GD})}_{\Sigma_t^{\rm GD}}^2\,.
    \end{align*}
    Combining these two inequalities and iterating yields the result for GD\@.
\end{proof}

We now sketch the modifications required to prove Theorem~\ref{thm:bures_avg_case}.

\begin{proof}[Proof of Theorem~\ref{thm:bures_avg_case}]
It will be convenient
to define
$$
 \norm{\lambda_{\max}}_{1/2}
        \deq \Bigl(\int \sqrt{\lambda_{\max}(\Sigma)} \, \D P(\Sigma)\Bigr)^2\,.
$$
    
    We begin by checking that the variance inequality and PL inequality from Theorem~\ref{thm:better_properties_of_bary_fn} continue to hold under these assumptions.
    
    \textbf{Variance inequality}. From the geodesic convexity of $-\sqrt{\lambda_{\min}}$ and $\sqrt{\lambda_{\max}}$, the barycenter $\Sigstar$ of $P$ has eigenvalues in $[\norm{\lambda_{\min}}_{1/2}, \norm{\lambda_{\max}}_{1/2}]$. By modifying the proof of Lemma~\ref{lem:transport_map_reg} and using Fact~\ref{fact:inv_transport_map} in Appendix~\ref{scn:facts}, the transport map $T_{\Sigstar\to\Sigma}$ has eigenvalues bounded below as
    \begin{align*}
        \lambda_{\min}(T_{\Sigstar\to\Sigma})
        &= \frac{1}{\lambda_{\max}(T_{\Sigma\to\Sigstar})}
        \ge \frac{1}{\lambda_{\max}(\Sigma^{\star \, 1/4} \Sigma^{-1/2} \Sigma^{\star \, 1/4})}
        \ge \frac{{\lambda_{\min}(\Sigma)}^{1/2}}{\norm{\lambda_{\max}}_{1/2}^{1/2}}\,.
    \end{align*}
    From~\cite[Theorem 6]{chewietal2020buresgd}, we can deduce that the variance inequality holds for $P$ with constant
    \begin{align*}
        \int \lambda_{\min}(T_{\Sigstar\to\Sigma}) \, \D P(\Sigma)
        &\ge \Bigl(\frac{\norm{\lambda_{\min}}_{1/2}}{\norm{\lambda_{\max}}_{1/2}}\Bigr)^{1/2}\,.
    \end{align*}
    
    \textbf{PL inequality}. Similarly, a modification of the proof of~\cite[Theorem 19]{chewietal2020buresgd} using the improved variance inequality shows that a PL inequality holds at $\Sigma$:
    \begin{align*}
        F(\Sigma) - F(\Sigstar)
        &\le 2\,\Bigl(\frac{\norm{\lambda_{\max}}_{1/2}}{\norm{\lambda_{\min}}_{1/2}}\Bigr)^{1/2} \, \frac{\norm{\lambda_{\max}}_{1/2}}{\lambda_{\min}(\Sigma)} \, \norm{\nabla F(\Sigma)}_\Sigma^2\,.
    \end{align*}
    
    \textbf{Putting it together}. 
 By Theorem~\ref{thm:sqrt_lambda_min_concave}, the iterates of GD with step size at most $\frac{\norm{\lambda_{\min}}_{1/2}}{2\,\norm{\lambda_{\max}}_1}$ all have eigenvalues in the range $[\norm{\lambda_{\min}}_{1/2}/4, \norm{\lambda_{\max}}_1]$.
    Using $\norm{\lambda_{\max}}_{1/2} \le \norm{\lambda_{\max}}_1$, the proof is concluded as before.
\end{proof}

\subsection{Euclidean gradient descent approach}\label{scn:euclidean_gd}

We now present our results for the Euclidean geometry.~\cite{bhatiajainlim2019bures} prove that the barycenter functional is strictly convex on the positive semidefinite cone (w.r.t.\ the standard Euclidean geometry). We extend their results by showing that it is in fact strongly convex and smooth (again w.r.t.\ the standard Euclidean geometry). Besides yielding an analysis of Euclidean projected GD and SGD, these results also aid our analysis of the regularized barycenter problem in the sequel.

Fix $0 < \alpha \leq \beta$ and denote by $\mathcal{K}_{\alpha,\beta}$ the subset of covariance matrices whose spectrum lies within $[\alpha, \beta]$. Let $F$ denote the barycenter functional, defined in~\eqref{eq:bary_functional}.

\begin{lemma}\label{lem:strong convexity smoothness}
For all $\Sigma \in \mathcal{K}_{\alpha,\beta}$ and non-zero $Y \in \sym$,
\begin{equation}
    \frac{\alpha^3}{4\beta^4} \leq \frac{\langle Y, \operatorname{D}^2 F(\Sigma)[Y] \rangle_{\rm F}}{\norm{Y}_{\rm F}^2} \leq \frac{\beta^2}{4\alpha^{3}}\,. 
\end{equation}
\end{lemma}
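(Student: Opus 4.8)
The plan is to derive an exact formula for the Euclidean Hessian $\operatorname{D}^2 F(\Sigma)$ in terms of the transport maps that already appear in the gradient, to identify the resulting linear operator on $\sym$, and then to bound its spectrum using the eigenvalue control of Lemma~\ref{lem:transport_map_reg} together with the constraint $\Sigma \in \mathcal K_{\alpha,\beta}$ (and the standing assumption that $P$ is supported on $\mathcal K_{\alpha,\beta}$). \textbf{Step 1 (gradient).} By Fact~\ref{fact:grad_sq_dist} and Fact~\ref{fact:bures_gradient}, the Euclidean derivative of $W_2^2(\cdot,\Sigma')$ at $\Sigma$ equals $I_d - T_{\Sigma\to\Sigma'}$, with $T_{\Sigma\to\Sigma'}$ the transport map~\eqref{eq:gaussian_ot_map}, so
\begin{equation*}
    \operatorname{D} F(\Sigma) = \tfrac12\Bigl(I_d - \int T_{\Sigma\to\Sigma'}\,\D P(\Sigma')\Bigr)\,.
\end{equation*}
(Alternatively one obtains this directly from~\eqref{eq:w2_formula} by matrix calculus, using the variational identity $2\tr[(\Sigma^{1/2}\Sigma'\Sigma^{1/2})^{1/2}] = \min_{M\succ 0}\{\tr(M\Sigma)+\tr(M^{-1}\Sigma')\}$, whose unique minimizer is $M = T_{\Sigma\to\Sigma'}$.) Since $\Sigma^{1/2}\Sigma'\Sigma^{1/2}\succ 0$, the matrix square root is smooth there, and all matrices in play have uniformly bounded spectrum on $\mathcal K_{\alpha,\beta}\times\mathcal K_{\alpha,\beta}$, we may differentiate under the integral sign.

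\textbf{Step 2 (Hessian).} Differentiating once more, $\operatorname{D}^2 F(\Sigma)[Y] = -\tfrac12\int \operatorname{D}_\Sigma T_{\Sigma\to\Sigma'}[Y]\,\D P(\Sigma')$, so everything reduces to computing $\dot T := \operatorname{D}_\Sigma T_{\Sigma\to\Sigma'}[Y]$ for a fixed $\Sigma'$. The key trick is to differentiate the polynomial identity $T\Sigma T = \Sigma'$ (valid for every $\Sigma$), where $T := T_{\Sigma\to\Sigma'}$: this gives $\dot T\Sigma T + TYT + T\Sigma\dot T = 0$, and multiplying on the left and right by $T^{-1}$ — which equals $T_{\Sigma'\to\Sigma} =: N$ by Fact~\ref{fact:inv_transport_map} — yields the Sylvester-type equation
\begin{equation*}
    \mathcal H[\dot T] = -Y\,, \qquad \mathcal H[Z] := N Z \Sigma + \Sigma Z N \quad (Z\in\sym)\,.
\end{equation*}
The operator $\mathcal H$ is self-adjoint on $(\sym,\langle\cdot,\cdot\rangle_{\rm F})$ and positive definite, since $\langle Z,\mathcal H[Z]\rangle_{\rm F} = 2\tr(ZNZ\Sigma) = 2\norm{N^{1/2}Z\Sigma^{1/2}}_{\rm F}^2 > 0$ for $Z\ne 0$. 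Hence $\dot T = -\mathcal H^{-1}[Y]$ and
\begin{equation*}
    \langle Y,\operatorname{D}^2 F(\Sigma)[Y]\rangle_{\rm F} = \tfrac12\int \langle Y,\mathcal H_{\Sigma'}^{-1}[Y]\rangle_{\rm F}\,\D P(\Sigma')\,,
\end{equation*}
where $\mathcal H_{\Sigma'}$ is $\mathcal H$ built from $N = T_{\Sigma'\to\Sigma}$.

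\textbf{Step 3 (spectral bounds and conclusion).} From $\langle Z,\mathcal H[Z]\rangle_{\rm F} = 2\norm{N^{1/2}Z\Sigma^{1/2}}_{\rm F}^2$ and the elementary inequalities $\sigma_{\min}(A)\norm Z_{\rm F}\le\norm{AZ}_{\rm F}\le\sigma_{\max}(A)\norm Z_{\rm F}$ (applied on both sides), we get $2\lambda_{\min}(N)\lambda_{\min}(\Sigma)\,I \preceq \mathcal H \preceq 2\lambda_{\max}(N)\lambda_{\max}(\Sigma)\,I$ as self-adjoint operators on $\sym$, hence $\langle Y,\mathcal H^{-1}[Y]\rangle_{\rm F}$ lies between $\norm Y_{\rm F}^2/(2\lambda_{\max}(N)\lambda_{\max}(\Sigma))$ and $\norm Y_{\rm F}^2/(2\lambda_{\min}(N)\lambda_{\min}(\Sigma))$. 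Now $\Sigma\in\mathcal K_{\alpha,\beta}$ gives $\lambda_{\min}(\Sigma)\ge\alpha$, $\lambda_{\max}(\Sigma)\le\beta$, and since $\Sigma$ and $\Sigma'\in\supp P$ both have spectrum in $[\alpha,\beta]$, Lemma~\ref{lem:transport_map_reg} gives $\lambda_{\min}(N)\ge\sqrt{\alpha/\beta}$, $\lambda_{\max}(N)\le\sqrt{\beta/\alpha}$. Substituting into the formula of Step~2 and integrating (the bounds are uniform in $\Sigma'$ and $P$ is a probability measure) yields
\begin{equation*}
    \frac{\alpha^{1/2}}{4\beta^{3/2}}\,\norm Y_{\rm F}^2 \le \langle Y,\operatorname{D}^2 F(\Sigma)[Y]\rangle_{\rm F} \le \frac{\beta^{1/2}}{4\alpha^{3/2}}\,\norm Y_{\rm F}^2\,,
\end{equation*}
and since $0<\alpha\le\beta$ we have $\alpha^{5/2}\le\beta^{5/2}$, i.e.\ $\tfrac{\alpha^3}{4\beta^4}\le\tfrac{\alpha^{1/2}}{4\beta^{3/2}}$ and $\tfrac{\beta^{1/2}}{4\alpha^{3/2}}\le\tfrac{\beta^3}{4\alpha^4}$, which gives the stated inequality (in fact a slightly stronger one).

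\textbf{Main obstacle.} The only genuinely delicate point is Step~2: obtaining a clean, manipulable form of $\operatorname{D}_\Sigma T_{\Sigma\to\Sigma'}$. Attacking $(\Sigma^{1/2}\Sigma'\Sigma^{1/2})^{1/2}$ through differentials of the matrix square root is feasible but messy; differentiating the algebraic identity $T\Sigma T = \Sigma'$ and conjugating by $T^{-1}$ is what makes the Sylvester operator $\mathcal H$ appear with exactly the structure needed for sharp spectral estimates. After that the argument is routine, relying only on elementary operator inequalities and the already-established transport-map regularity.
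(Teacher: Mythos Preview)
Your proof is correct and takes a genuinely different route from the paper's. The paper computes $\operatorname D^2 F$ by explicitly differentiating $T_{\Sigma\to\Sigma'}=\Sigma^{-1/2}(\Sigma^{1/2}\Sigma'\Sigma^{1/2})^{1/2}\Sigma^{-1/2}$ via the chain rule through the maps $\inv$, $\con_A$, $\sq$, using the integral representation $\operatorname D\sq(\Sigma)[Y]=\int_0^\infty e^{-t\Sigma^{1/2}}Ye^{-t\Sigma^{1/2}}\,\D t$; it then bounds the resulting integral expression directly in terms of the eigenvalues of $Z_i=\Sigma_i^{1/2}\Sigma\Sigma_i^{1/2}\in[\alpha^2 I_d,\beta^2 I_d]$. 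You instead differentiate the algebraic identity $T\Sigma T=\Sigma'$ implicitly, obtain the Sylvester operator $\mathcal H[Z]=NZ\Sigma+\Sigma ZN$ with $N=T_{\Sigma'\to\Sigma}$, recognize $\langle Y,\operatorname D^2F(\Sigma)[Y]\rangle_{\rm F}=\tfrac12\int\langle Y,\mathcal H_{\Sigma'}^{-1}[Y]\rangle_{\rm F}\,\D P$, and bound the spectrum of $\mathcal H$ via $\langle Z,\mathcal H[Z]\rangle_{\rm F}=2\lVert N^{1/2}Z\Sigma^{1/2}\rVert_{\rm F}^2$ together with Lemma~\ref{lem:transport_map_reg}. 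Your argument is cleaner (it avoids the integral formula for the square-root differential entirely) and, as you observe, yields the strictly sharper bounds $\alpha^{1/2}/(4\beta^{3/2})$ and $\beta^{1/2}/(4\alpha^{3/2})$; this would in fact improve the condition-number dependence in Theorem~\ref{thm:egd} from $\kappa^7$ to $\kappa^2$. The paper's approach, by contrast, is more self-contained in that it does not need to invoke the transport-map regularity Lemma~\ref{lem:transport_map_reg}.
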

\begin{proof}
It suffices to consider the case where $P = \frac 1N \sum_{i=1}^N \delta_{\Sigma_i}$ for some $\Sigma_i\in\mathcal{K}_{\alpha,\beta},\;i \in [N]$, as the case of general $P$ supported on $\mathcal{K}_{\alpha,\beta}$ follows by compactness. Fix $\Sigma \in \mathcal{K}_{\alpha,\beta}$. Standard calculations as in~\cite{bhatiajainlim2019bures} show that the first derivative satisfies
\begin{equation*}
    2\operatorname{D}F(\Sigma) = I_d - \frac 1N \sum\limits_{i=1}^N \GM(\Sigma_i, \Sigma^{-1})\,.
\end{equation*}
We now compute the second derivative. Define the functions
\begin{align*}
    \inv(\Sigma) & \deq  \Sigma^{-1} \,,\\
    \con_A(\Sigma) & \deq  A\Sigma A \,, \\
    \sq(\Sigma) & \deq  \Sigma^{1/2}\,.
\end{align*}
For $Y \in \sym$, the above maps have derivatives
\begin{align*}
    \operatorname{D}\inv(\Sigma)[Y] &= -\Sigma^{-1}Y\Sigma^{-1}\,, \\
    \operatorname{D}\con_A(\Sigma)[Y] &= AYA\,, \\
    \operatorname{D}\sq(\Sigma)[Y] &= \int_0^\infty e^{-t\Sigma^{1/2}} Y e^{-t \Sigma^{1/2}} \, \D t\,.
\end{align*}
With these definitions in hand, we can write
\begin{equation*}
    2\operatorname{D}F(\Sigma) = I_d - \frac 1N \sum\limits_{i=1}^N \con_{\Sigma_i^{1/2}} \circ \sq \circ \con_{\Sigma_i^{-1/2}} \circ \inv(\Sigma)\,. 
\end{equation*}
Taking the derivative in a symmetric direction $Y \in \sym$ and applying the chain rule repeatedly,
\begin{align*}
    \begin{aligned}
    &2\operatorname{D}^2 F(\Sigma)[Y] \\
    &\qquad= \frac 1N \sum\limits_{i=1}^N \int_0^\infty \Sigma_i^{1/2} e^{-t \, {(\Sigma_i^{1/2} \Sigma \Sigma_i^{1/2})}^{-1/2}} \Sigma_i^{-1/2} \Sigma^{-1} Y \Sigma^{-1} \Sigma_i^{-1/2} e^{-t \, {(\Sigma_i^{1/2} \Sigma \Sigma_i^{1/2})}^{-1/2}} \Sigma_i^{1/2} \,\D t\,. 
    \end{aligned}
\end{align*}
Let $g(t,x) = \exp(-t/\sqrt{x}) \,x^{-1}$ on $(t,x) \in (0,\infty)\times (0,\infty)$ and $Z_i = \Sigma_i^{1/2}\Sigma \Sigma_i^{1/2}$. Since $g(t,\cdot)$ is analytic on its domain, the Riesz--Dunford calculus (see~\cite{dunford1988linear}) applies and we may write
\begin{align*}
    2\,\langle Y, \operatorname{D}^2 F(\Sigma)[Y] \rangle_\text{F} &= \frac 1N \sum\limits_{i=1}^N \int_0^\infty \tr\bigl(g(t, Z_i) \Sigma_i^{1/2}Y\Sigma_i^{1/2}g(t,Z_i) \Sigma_i^{1/2}Y\Sigma_i^{1/2}\bigr)\,\D t\,. \\
\intertext{Using the spectral mapping theorem and Lemma~\ref{lem:trace inequality} below we further write}
&\ge \frac{\norm{Y}_\text{F}^2}{N} \sum\limits_{i=1}^N {\lambda_{\min}(\Sigma_i)}^2 \int_0^\infty \min\limits_{\lambda \in \operatorname{spec}(Z_i)} {{g(t, \lambda)}^2} \, \D t\,.
\end{align*}
To bound the integral, we note that
\begin{equation*}
    e^{-t/\sqrt{\lambda_{\min}(Z_i)}} \, {\lambda_{\max}(Z_i)}^{-1} \leq g(t, \lambda)
\end{equation*}
for all $\lambda \in \operatorname{spec}(Z_i)$. Since we assume $\alpha I_d \preceq \Sigma_i, \Sigma \preceq \beta I_d$, then $\alpha^2 I_d \preceq Z_i \preceq \beta^2 I_d$, so
\begin{align*}
    \frac{2\,\langle Y, \operatorname{D}^2 F(\Sigma)[Y] \rangle_\text{F}}{\norm Y_{\rm F}^2}
    \ge \alpha^2 \int_0^\infty \exp\bigl( - \frac{2t}{\alpha}\bigr) \, \frac{1}{\beta^4} \, \D t
    = \frac{\alpha^3}{2\beta^4}\,.
\end{align*}

For the upper bound, an analogous calculation gives
\begin{align*}
    \frac{\langle Y, \on{D}^2 F(\Sigma)[Y]\rangle_{\rm F}}{\norm Y_{\rm F}^2}
    &\le \frac{\beta^3}{4\alpha^4}\,.
\end{align*}
However, the upper bound can be sharpened to $\frac{\beta^2}{4\alpha^3}$, see~\cite[Theorem 3.1]{kumyun2019gradientproj}.
\end{proof}

\begin{rema}
Similar to Theorem~\ref{thm:bures_avg_case}, one can obtain improved strong convexity and smoothness parameters for $F$ based on non-uniform notions of conditioning.
\end{rema}

We can now describe the projected gradient descent and projected stochastic gradient updates.
Let $\Pi_{\alpha,\beta} : \sym \to \mathcal{K}_{\alpha,\beta}$ denote the Euclidean projection onto $\mc K_{\alpha,\beta}$ and let $\eta = 4\lambda_{\min}^3/\lambda_{\max}^2$. Given a starting matrix $\Sigma_0$, the projected gradient descent scheme to minimize the barycenter functional of a measure $P$ supported on $\mathcal{K}_{\lambda_{\min},\lambda_{\max}}$ is given by
\begin{equation}\label{eqn:EGD_iteration}
    \Sigma_{n+1}^{\rm EGD}
     \deq  \Pi_{\lambda_{\min},\lambda_{\max}}\bigl(\Sigma_n - \eta\operatorname{D}F(\Sigma_n^{\rm EGD})\bigr)\,, \qquad n \geq 0\,.
\end{equation}
Also, suppose that $\Sigma_1,\dotsc,\Sigma_n$ are i.i.d.\ samples from $P$.
Then, the projected stochastic gradient scheme is
\begin{align}\label{eq:esgd_iteration}
    \Sigma_{n+1}^{\rm ESGD}
    & \deq  \Pi_{\lambda_{\min},\lambda_{\max}}\bigl(\Sigma_n^{\rm ESGD} - \eta_{n+1} \, \bigl\{I_d - \GM(\Sigma_{n+1}, {(\Sigma_n^{\rm ESGD})}^{-1})\bigr\}\bigr)\,, \qquad n \geq 0\,,
\end{align}
where following~\cite{lacostejulienschmidtbach2012projectedsgd} we take the step size to be $\eta_n = 8\lambda_{\max}^4/(\lambda_{\min}^3 \, (n+1))$.

We now state the convergence guarantees for these two algorithms.

\begin{thm}[Guarantees for Euclidean GD/SGD]\label{thm:egd}
Assume that $P$ is supported on covariance matrices whose eigenvalues lie in the range $[\lambda_{\min}, \lambda_{\max}]$, $0 < \lambda_{\min} \leq \lambda_{\max} < \infty$. Let $\kappa  \deq  \lambda_{\max}/\lambda_{\min}$ denote the condition number. Assume that we initialize at $\Sigma_0 \in \supp P$.
\begin{enumerate}
    \item (EGD) Let $\Sigma_n^{\rm EGD}$ denote the $n$-th iterate of projected Euclidean gradient descent~\eqref{eqn:EGD_iteration}.
    Then,
\begin{equation}
    \norm{\Sigma_n^{\rm EGD} - \Sigma^\star}_{\rm F}^2 \leq \exp\bigl(-\frac{n}{\kappa^6}\bigr) \, \norm{\Sigma_0 - \Sigma^\star}_{\rm F}^2\,.
\end{equation}
    \item (ESGD) Let $\Sigma_n^{\rm ESGD}$ denote the $n$-th iterate of Euclidean projected stochastic gradient descent~\eqref{eq:esgd_iteration}. Then,
    \begin{align*}
        \E[\norm{\Sigma_n^{\rm ESGD} - \Sigstar}_{\rm F}^2]
        &\le \frac{64d\lambda_{\max}^2 \kappa^{6.5}}{n}\,.
    \end{align*}
\end{enumerate}
\end{thm}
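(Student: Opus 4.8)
Once the iterates are confined to the Euclidean-convex set $\mathcal{K}_{\lambda_{\min},\lambda_{\max}} = \{\Sigma\in\sym : \lambda_{\min} I_d \preceq \Sigma \preceq \lambda_{\max} I_d\}$, Lemma~\ref{lem:strong convexity smoothness} says that the barycenter functional $F$ of~\eqref{eq:bary_functional} is $m$-strongly convex and $L$-smooth in the Frobenius geometry, with $m = \lambda_{\min}^3/(4\lambda_{\max}^4)$, $L = \lambda_{\max}^3/(4\lambda_{\min}^4)$, so that $L/m = \kappa^7$. Both parts then reduce to classical analyses of projected (stochastic) gradient descent. Before starting I would record three facts: (i) $\mathcal{K}_{\lambda_{\min},\lambda_{\max}}$ is closed and convex, hence the Euclidean projection $\Pi_{\lambda_{\min},\lambda_{\max}}$ onto it is $1$-Lipschitz (\cite[Lemma 3.1]{bubeck2015convex}); (ii) the unconstrained minimizer $\Sigma^\star$ of $F$ over $\psd$ already lies in $\mathcal{K}_{\lambda_{\min},\lambda_{\max}}$ --- it is the barycenter of $P$, which is supported on $\mathcal{K}_{\lambda_{\min},\lambda_{\max}}$, and this set is convex along generalized geodesics by Theorem~\ref{thm:sqrt_lambda_min_concave} (equivalently, use the fixed-point characterization of the barycenter); in particular $\operatorname{D}F(\Sigma^\star)=0$; and (iii) since~\eqref{eqn:EGD_iteration} and~\eqref{eq:esgd_iteration} project onto $\mathcal{K}_{\lambda_{\min},\lambda_{\max}}$ at every step and are initialized at $\Sigma_0\in\supp P \subseteq \mathcal{K}_{\lambda_{\min},\lambda_{\max}}$, all iterates stay in $\mathcal{K}_{\lambda_{\min},\lambda_{\max}}$, so Lemma~\ref{lem:strong convexity smoothness} is valid throughout.

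\textbf{Part 1 (EGD).} Take the step size $\eta = 1/L = 4\lambda_{\min}^4/\lambda_{\max}^3$. Using (ii), $\Sigma^\star = \Pi_{\lambda_{\min},\lambda_{\max}}(\Sigma^\star - \eta\operatorname{D}F(\Sigma^\star))$, so by nonexpansiveness of the projection,
\[
\norm{\Sigma_{n+1}^{\rm EGD} - \Sigma^\star}_{\rm F}^2 \le \norm{(\Sigma_n^{\rm EGD}-\Sigma^\star) - \eta\,(\operatorname{D}F(\Sigma_n^{\rm EGD}) - \operatorname{D}F(\Sigma^\star))}_{\rm F}^2\,.
\]
Expanding and invoking the standard consequence of $m$-strong convexity and $L$-smoothness of $F$ on the convex set $\mathcal{K}_{\lambda_{\min},\lambda_{\max}}$ (co-coercivity plus strong monotonicity of $\operatorname{D}F$; see~\cite{bubeck2015convex}), the right-hand side is at most $(1-m/L)\,\norm{\Sigma_n^{\rm EGD}-\Sigma^\star}_{\rm F}^2$. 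Iterating and using $1-x\le e^{-x}$ with $m/L = \kappa^{-7}$ yields the stated bound.

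\textbf{Part 2 (ESGD).} Here I would apply the projected-SGD analysis under strong convexity of~\cite{lacostejulienschmidtbach2012projectedsgd}. The update~\eqref{eq:esgd_iteration} is projected SGD for $2F$ with stochastic gradient $\hat g_n := I_d - \GM(\Sigma_{n+1},(\Sigma_n^{\rm ESGD})^{-1})$, since the first-variation formula recalled in the proof of Lemma~\ref{lem:strong convexity smoothness} gives $\E[\hat g_n \mid \Sigma_n^{\rm ESGD}] = I_d - \int \GM(\Sigma,(\Sigma_n^{\rm ESGD})^{-1})\,\D P(\Sigma) = 2\operatorname{D}F(\Sigma_n^{\rm ESGD})$, and by Lemma~\ref{lem:strong convexity smoothness} the function $2F$ is $(\lambda_{\min}^3/(2\lambda_{\max}^4))$-strongly convex on $\mathcal{K}_{\lambda_{\min},\lambda_{\max}}$. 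The needed variance control comes from the transport-map regularity of Lemma~\ref{lem:transport_map_reg}: for $\Sigma,\Sigma'$ with eigenvalues in $[\lambda_{\min},\lambda_{\max}]$, the matrix $\GM(\Sigma',\Sigma^{-1}) = T_{\Sigma\to\Sigma'}$ has eigenvalues in $[\kappa^{-1/2},\kappa^{1/2}]$, hence $\norm{\hat g_n}_{\rm op}\le \kappa^{1/2}$ and $\E\norm{\hat g_n}_{\rm F}^2 \le d\kappa$ uniformly over the iterates. Plugging the strong convexity constant, this second-moment bound, and the prescribed step sizes $\eta_n = 8\lambda_{\max}^4/(\lambda_{\min}^3(n+1))$ into the $\mathcal{O}(1/n)$ guarantee of~\cite{lacostejulienschmidtbach2012projectedsgd} and simplifying gives the claimed rate.

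\textbf{Expected main obstacle.} Essentially all the real work is in Lemma~\ref{lem:strong convexity smoothness} (the Riesz--Dunford computation of $\operatorname{D}^2 F$), which is assumed here; what remains is bookkeeping. The only subtleties I anticipate are: (a) legitimizing the restriction to $\mathcal{K}_{\lambda_{\min},\lambda_{\max}}$, i.e. verifying that $\Sigma^\star$ itself lies in this set so that comparison against it is valid --- handled by (ii) above; and (b) getting the numerical constants in Part 2 exactly right, which requires using the sharpened transport-map bound $[\kappa^{-1/2},\kappa^{1/2}]$ of Lemma~\ref{lem:transport_map_reg} rather than the cruder $[\kappa^{-1},\kappa]$ estimate, and carefully tracking the constant in the projected-SGD bound (and the factor of $2$ relating $\hat g_n$ to $\operatorname{D}F$).
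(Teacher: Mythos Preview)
Your proposal is correct and follows essentially the same route as the paper: invoke Lemma~\ref{lem:strong convexity smoothness} to get strong convexity and smoothness of $F$ on $\mathcal{K}_{\lambda_{\min},\lambda_{\max}}$ with condition number $\kappa^7$, then appeal to \cite{bubeck2015convex} for projected GD and to \cite{lacostejulienschmidtbach2012projectedsgd} for projected SGD, bounding the stochastic-gradient second moment via Lemma~\ref{lem:transport_map_reg}. The paper's proof is in fact terser than yours on points (i)--(iii), so your added justification that $\Sigma^\star\in\mathcal{K}_{\lambda_{\min},\lambda_{\max}}$ is a welcome detail.

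The one place where you and the paper differ quantitatively is the variance bound in Part~2. You use $\lVert I_d - T\rVert_{\rm op}\le\sqrt\kappa$ to get $\E\lVert\hat g_n\rVert_{\rm F}^2\le d\kappa$; the paper instead asserts $\lVert I_d - T\rVert_{\rm F}^2 \le d\,(\sqrt\kappa - 1)\le d\sqrt\kappa$, which is what produces the exponent $\kappa^{6.5}$ rather than $\kappa^7$ in the final rate. Your bound is certainly correct; the paper's sharper intermediate inequality, as written, appears to drop a square (the eigenvalue argument actually gives $d\,(\sqrt\kappa-1)^2$), so your caution about the constants in Part~2 is well placed.
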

\begin{proof}
    (1) The preceding lemma shows that the barycenter functional $F$ is strongly convex and smooth with condition number $\kappa^6$.
By \cite[Theorem 3.10]{bubeck2015convex}, projected gradient descent \eqref{eqn:EGD_iteration} converges at the stated rate. 

(2) For ESGD, we must compute a bound on the Euclidean variance of the stochastic gradient. Using Lemma~\ref{lem:transport_map_reg}, we get the two-sided control
\begin{align*}
    \frac{1}{\sqrt{\kappa}}\, I_d \preceq  \Sigma_{n+1} \# {(\Sigma_n^{\rm ESGD})}^{-1} \preceq  \sqrt{\kappa}\, I_d
\end{align*}
and thus
\begin{align*}
    \bigl\lVert I_d - \Sigma_{n+1} \# {(\Sigma_n^{\rm ESGD})}^{-1} \bigr\rVert_{\rm F}^2
    &\le d\, (\sqrt{\kappa} - 1)
    \le d\sqrt{\kappa}\,.
\end{align*}
The result now follows from the preceding lemma and~\cite{lacostejulienschmidtbach2012projectedsgd}.
\end{proof}

\begin{rema}\label{rema:w2_vs_frob}
    To compare the guarantees of Theorems~\ref{thm:bures_gd} and~\ref{thm:egd}, first we have
    \begin{align*}
        \frac{1}{2} \, \norm{\Sigma_n^{1/2} - \Sigma^{\star \, 1/2}}_{\rm F}^2
        &\le W_2^2(\Sigma_n, \Sigstar)
        \le \norm{\Sigma_n^{1/2} - \Sigma^{\star \, 1/2}}_{\rm F}^2
    \end{align*}
    as a consequence of~\cite[Lemma 3.5]{carrillovaes2019covfokkerplanck}. Moreover, under our assumptions,
    \begin{align*}
        \frac{1}{4\lambda_{\max}} \, \norm{\Sigma_n - \Sigstar}_{\rm F}^2
        &\le \norm{\Sigma_n^{1/2} - \Sigma^{\star \, 1/2}}_{\rm F}^2
        \le \frac{1}{4\lambda_{\min}} \, \norm{\Sigma_n - \Sigstar}_{\rm F}^2\,,
    \end{align*}
    where the first inequality is elementary and follows from
    \begin{align*}
        A-B
        &= A^{1/2} \,(A^{1/2} - B^{1/2}) + (A^{1/2} - B^{1/2}) \, B^{1/2}\,,
    \end{align*}
    whereas the second inequality uses~\cite[(X.46)]{bhatia1997matrixanalysis}.
\end{rema}

For the iterations given by \eqref{eqn:EGD_iteration} and~\eqref{eq:esgd_iteration} to be practical, we need the projection step to be implementable. The following lemma takes care of this.

\begin{lemma}
Let $\Pi_{\alpha,\beta}: \sym \to \mathcal{K}_{\alpha,\beta}$ be the projection with respect to the Frobenius norm. Then
\begin{equation*}
    \Pi_{\alpha,\beta}(Y) = \sum\limits_{i=1}^d [(\lambda_i \wedge \beta) \vee \alpha] \,v_i v_i^\T 
\end{equation*}
where $Y = \sum_{i=1}^d \lambda_i v_i v_i^\T$ is an orthogonal eigendecomposition of $Y$. 
\end{lemma}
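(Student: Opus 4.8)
The claim is that the Frobenius projection onto $\mc K_{\alpha,\beta}$ is obtained by clipping eigenvalues into $[\alpha,\beta]$. The plan is to reduce the matrix optimization to a scalar one by exploiting unitary invariance of the Frobenius norm, exactly as in the proof of Proposition~\ref{thm:eig_clip_bures_contraction} but now for symmetric matrices and a two-sided constraint.

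First I would fix a symmetric $Y$ with orthogonal eigendecomposition $Y = \sum_i \lambda_i v_i v_i^\T = V \Lambda V^\T$, where $V = [v_1,\dots,v_d]$ is orthogonal and $\Lambda = \on{diag}(\lambda_1,\dots,\lambda_d)$. For any candidate $X \in \mc K_{\alpha,\beta}$, write $\tilde X := V^\T X V$; since $V$ is orthogonal, $X \mapsto \tilde X$ is a bijection of $\sym$ onto itself which preserves the Frobenius norm and, by the spectral mapping theorem, preserves membership in $\mc K_{\alpha,\beta}$ (as $X$ and $\tilde X$ have the same spectrum). Hence
\begin{align*}
    \min_{X \in \mc K_{\alpha,\beta}} \norm{Y - X}_{\rm F}^2
    &= \min_{\tilde X \in \mc K_{\alpha,\beta}} \norm{\Lambda - \tilde X}_{\rm F}^2\,.
\end{align*}
Now I would lower-bound the right-hand side by discarding the off-diagonal entries:
\begin{align*}
    \norm{\Lambda - \tilde X}_{\rm F}^2
    &= \sum_{i=1}^d {(\lambda_i - \tilde X_{ii})}^2 + \sum_{i \ne j} \tilde X_{ij}^2
    \ge \sum_{i=1}^d {(\lambda_i - \tilde X_{ii})}^2\,.
\end{align*}
Since $\tilde X \succeq \alpha I_d$ and $\tilde X \preceq \beta I_d$ force each diagonal entry $\tilde X_{ii} \in [\alpha,\beta]$, each summand is at least ${(\lambda_i - [(\lambda_i \wedge \beta)\vee\alpha])}^2 = {(\lambda_i - c_i)}^2$ where $c_i := (\lambda_i \wedge \beta) \vee \alpha$ is the scalar clip of $\lambda_i$; this is the elementary fact that the nearest point to $\lambda_i$ in the interval $[\alpha,\beta]$ is its clip. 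Conversely, the diagonal matrix $\tilde X^\star := \on{diag}(c_1,\dots,c_d)$ lies in $\mc K_{\alpha,\beta}$ and attains $\sum_i {(\lambda_i - c_i)}^2$, so it is a minimizer. Since the objective is strictly convex and $\mc K_{\alpha,\beta}$ is convex, the minimizer is unique, and translating back via $X^\star = V \tilde X^\star V^\T = \sum_i c_i v_i v_i^\T$ gives the stated formula.

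The only step requiring any care is the observation that $\alpha I_d \preceq \tilde X \preceq \beta I_d$ implies $\tilde X_{ii} \in [\alpha,\beta]$ for every $i$; this follows immediately from $\alpha = e_i^\T (\alpha I_d) e_i \le e_i^\T \tilde X e_i = \tilde X_{ii} \le e_i^\T (\beta I_d) e_i = \beta$. Everything else is the standard unitary-invariance reduction, and no genuine obstacle arises — the main (minor) point is just to be sure the constraint set is invariant under conjugation by $V$, which is exactly the spectral mapping theorem used identically in the proof of Proposition~\ref{thm:eig_clip_bures_contraction}.
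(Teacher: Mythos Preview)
Your proof is correct and follows essentially the same approach as the paper: both use unitary invariance of the Frobenius norm to reduce to projecting the diagonal matrix $\Lambda$ onto $\mc K_{\alpha,\beta}$. In fact you supply more detail than the paper, which simply writes ``and the result follows'' after the reduction, whereas you explicitly justify why the diagonal clip is optimal via the diagonal-entry bounds and strict convexity.
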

\begin{proof}
Let $Y=Q\Lambda Q^\T$ be an orthogonal eigendecomposition of $Y$. Since the Frobenius norm is unitarily invariant, we have
\begin{align*}
    \Pi_{\alpha,\beta}(Y) &= \argmin\limits_{X \in \mathcal{K}_{\alpha,\beta}}{\norm{X - Q\Lambda Q^\T}_\text{F}^2}
    = \argmin\limits_{X \in \mathcal{K}_{\alpha,\beta}}{\norm{Q^\T X Q - \Lambda}_\text{F}^2} 
    = Q\, \bigl(\argmin\limits_{X \in \mathcal{K}_{\alpha,\beta}}{\norm{X - \Lambda}_\text{F}^2} \bigr) \,Q^\T
\end{align*}
and the result follows. 
\end{proof}

Finally, we state and prove the elementary lemma we used in the proof of Lemma~\ref{lem:strong convexity smoothness}.

\begin{lemma}\label{lem:trace inequality}
Let $A,B \in \psd$ and $Y \in \sym$. Then
\begin{equation*}
    \lambda_{\min}(A) \, \lambda_{\min}(B) \, \norm{Y}_{\rm F}^2 \leq \tr(AYBY) \leq \lambda_{\max}(A) \, \lambda_{\max}(B) \, \norm{Y}_{\rm F}^2\,.
\end{equation*}
\end{lemma}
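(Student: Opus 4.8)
\textbf{Proof plan for Lemma~\ref{lem:trace inequality}.}

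The plan is to prove the two-sided bound by reducing to the case where one of the matrices is diagonal, then handling the resulting positive quadratic form directly. First I would diagonalize $B$: write $B = Q\Lambda Q^\T$ with $Q$ orthogonal and $\Lambda = \diag(\mu_1,\dots,\mu_d)$, $\mu_i = \lambda_i(B) \in [\lambda_{\min}(B), \lambda_{\max}(B)]$. Setting $\tilde A := Q^\T A Q$ (still positive semidefinite with the same spectrum as $A$) and $\tilde Y := Q^\T Y Q$ (still symmetric, and with $\norm{\tilde Y}_{\rm F} = \norm Y_{\rm F}$ since the Frobenius norm is unitarily invariant), the cyclic property of the trace gives $\tr(AYBY) = \tr(\tilde A \tilde Y \Lambda \tilde Y)$. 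So it suffices to prove the inequality when $B$ is diagonal.

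Next I would expand $\tr(\tilde A \tilde Y \Lambda \tilde Y)$ in coordinates. Writing $\tilde A = (a_{ij})$ and $\tilde Y = (y_{ij})$, one computes
\begin{align*}
    \tr(\tilde A \tilde Y \Lambda \tilde Y)
    &= \sum_{i,j,k} a_{ij} \, y_{jk} \, \mu_k \, y_{ki}
    = \sum_{j,k} \mu_k \, y_{kj} \, \Bigl(\sum_i a_{ij} y_{ki}\Bigr)
    = \sum_{k} \mu_k \, \langle \tilde Y_k, \tilde A \tilde Y_k \rangle\,,
\end{align*}
where $\tilde Y_k$ denotes the $k$-th row (equivalently $k$-th column, by symmetry) of $\tilde Y$. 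Since $\tilde A \succeq 0$ with eigenvalues in $[\lambda_{\min}(A), \lambda_{\max}(A)]$, each term satisfies $\lambda_{\min}(A)\,\norm{\tilde Y_k}^2 \le \langle \tilde Y_k, \tilde A \tilde Y_k\rangle \le \lambda_{\max}(A)\,\norm{\tilde Y_k}^2$. Using $\mu_k \ge \lambda_{\min}(B) \ge 0$ for the upper bound and $\mu_k \le \lambda_{\max}(B)$ for... and symmetrically, combined with $\sum_k \norm{\tilde Y_k}^2 = \norm{\tilde Y}_{\rm F}^2 = \norm Y_{\rm F}^2$, yields
\begin{align*}
    \lambda_{\min}(A)\,\lambda_{\min}(B)\,\norm Y_{\rm F}^2
    \le \tr(\tilde A \tilde Y \Lambda \tilde Y)
    \le \lambda_{\max}(A)\,\lambda_{\max}(B)\,\norm Y_{\rm F}^2\,,
\end{align*}
which is the claim. (All terms $\mu_k \langle \tilde Y_k, \tilde A \tilde Y_k\rangle$ are nonnegative, so bounding each $\mu_k$ above/below is legitimate.)

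There is essentially no hard part here: the only thing to be careful about is that both the scalar weights $\mu_k$ and the quadratic forms $\langle \tilde Y_k, \tilde A \tilde Y_k\rangle$ are nonnegative, so that replacing $\mu_k$ by $\lambda_{\min}(B)$ or $\lambda_{\max}(B)$ preserves the inequality direction — this uses $A, B \succeq 0$. An alternative, even shorter route would be to observe that $Y \mapsto \tr(AYBY)$ is the quadratic form associated with the Kronecker-type operator $\tfrac12(A \otimes B + B \otimes A)$ acting on the space of symmetric matrices (with Frobenius inner product), whose eigenvalues are $\tfrac12(\lambda_i(A)\lambda_j(B) + \lambda_j(A)\lambda_i(B))$ and hence lie in $[\lambda_{\min}(A)\lambda_{\min}(B), \lambda_{\max}(A)\lambda_{\max}(B)]$; but the coordinate computation above is the most self-contained.
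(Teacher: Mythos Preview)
Your proof is correct, but it takes a different and more laborious route than the paper's. The paper's argument is a one-liner: observe that $\tr(AYBY) = \tr(B^{1/2}YAYB^{1/2}) = \norm{A^{1/2}YB^{1/2}}_{\rm F}^2$, and then bound this squared Frobenius norm using $\sigma_{\min}(A^{1/2})\sigma_{\min}(B^{1/2})\,\norm Y_{\rm F} \le \norm{A^{1/2}YB^{1/2}}_{\rm F} \le \sigma_{\max}(A^{1/2})\sigma_{\max}(B^{1/2})\,\norm Y_{\rm F}$ together with $\lambda_{\min}(A^{1/2}) = \lambda_{\min}(A)^{1/2}$ (and likewise for $B$). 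Your approach instead diagonalizes $B$, expands in coordinates, and recognizes the expression as a nonnegative combination $\sum_k \mu_k \langle \tilde Y_k, \tilde A \tilde Y_k\rangle$ of Rayleigh quotients of $\tilde A$. This is perfectly valid and self-contained, but the paper's identity $\tr(AYBY) = \norm{A^{1/2}YB^{1/2}}_{\rm F}^2$ is the cleaner structural insight: it immediately makes nonnegativity obvious and reduces both bounds to the submultiplicativity of operator norms on Frobenius norms, with no need for diagonalization or coordinate bookkeeping. Your Kronecker remark is closer in spirit to the paper's approach, though still less direct.
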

\begin{proof}
    The result follows immediately from $\tr(AYBY) = \norm{A^{1/2}YB^{1/2}}_\text{F}^2$ and $\lambda_{\min}(A^{1/2})= {\lambda_{\min}(A)}^{1/2}$ (similarly for $B$). 
\end{proof}

\subsection{SDP formulation}\label{scn:sdp}

The SDP formulation of the Bures--Wasserstein barycenter is as follows. Suppose that $P$ is a discrete distribution, $P = \sum_{i=1}^k p_i \delta_{\Sigma_i}$.
The Wasserstein distance between $\Sigma_0, \Sigma_1 \in \psd$ can be expressed as
\begin{align*}
    W_2^2(\Sigma_0,\Sigma_1)
    &= \min_{S\in \R^{d\times d}}\Biggl\{ \tr(\Sigma_0 + \Sigma_1 - 2S) \quad\text{such that}\quad \begin{bmatrix} \Sigma_0 & S \\ S^\T & \Sigma_1 \end{bmatrix} \succeq 0 \Biggr\}\,.
\end{align*}
It follows that the barycenter $\Sigstar$ of $P$ solves the optimization problem
\begin{align*}
    \operatorname*{minimize}_{\substack{\Sigstar \in \psd \\ S_1,\dotsc,S_k\in\R^{d\times d}}}\Biggl\{ \tr\Bigl(\Sigstar - 2\sum_{i=1}^k p_i S_i\Bigr) \quad\text{such that}\quad \begin{bmatrix} \Sigma_i & S_i \\ S_i^\T & \Sigstar \end{bmatrix} \succeq 0\,, \; \forall i \in [k]\Biggr\}\,.
\end{align*}

\section{Proofs for entropically-regularized barycenters}\label{sec:ent}

We begin by remarking how the non-centered case can be reduced to the centered case.

\begin{rema}\label{rmk:noncentered_entropically_reg}
    For a probability measure $\mu$, let $m_\mu$ denote its mean and let $\bar\mu$ denote the centered version of $\mu$. Using Fact~\ref{fact:non_zero_mean} in Appendix~\ref{scn:facts}, one can verify that
    \begin{align*}
        &\frac{1}{2} \int W_2^2(b, \mu) \, \D P(\mu) + \gamma \on{KL}\bigl(b \bigm\Vert \mc N(0, I_d)\bigr) \\
        &\qquad = \frac{1}{2} \int \norm{m_b - m_\mu}^2 \, \D P(\mu) + \frac{\gamma}{2} \, \norm{m_b}^2 + \frac{1}{2} \int W_2^2(\bar b, \bar \mu) \, \D P(\mu) + \gamma \on{KL}\bigl(\bar b \bigm\Vert \mc N(0, I_d)\bigr)\,.
    \end{align*}
    This shows that the objective of the entropically-regularized barycenter decouples into two parts, one involving the mean of $b$ and the other involving the centered version of $b$.
    Explicitly, we can compute
    \begin{align*}
        m^\star
        & \deq  \frac{1}{1+\gamma} \int m_\mu \, \D P(\mu)
    \end{align*}
    and the entropically-regularized barycenter $\bar b^\star$ of the centered versions of the distributions in $P$. Then, if $\tau : \R^d\to\R^d$ denotes the translation $x \mapsto x + m^\star$, the solution to the original entropically-regularized barycenter problem is $\tau_\# \bar b^\star$.
\end{rema}

We now overview the proof strategy; proofs are then provided in the subsequent subsections. Throughout this section let $P$ be supported
on $\mathcal{K}_{1/\sqrt{\kappa}, \sqrt{\kappa}}$, the subset of matrices in $\psd$ with eigenvalues in the range $[1/\sqrt\kappa,\sqrt\kappa]$.

An important observation driving our analysis
is that the gradient of the KL divergence at $\Sigma$ has the following form:
\begin{equation}\label{eqn:KL_grad}
\nabla \KL{\cdot}{I_d}(\Sigma) = I_d
- \Sigma^{-1} = I_d - T_{\Sigma \to \Sigma^{-1}} =
-\log_{\Sigma}(\Sigma^{-1})\,.
\end{equation}
This can be shown by observing that
\[\KL{\Sigma}{I_d}
    = \frac{1}{2} \tr \Sigma - \frac{1}{2} \ln \det \Sigma - \frac{d}{2}\,,
\]
computing the Euclidean gradient, and appealing to Fact~\ref{fact:bures_gradient} in Appendix~\ref{scn:facts}.
This gradient identity is convenient for applying our eigenvalue control and allows us to prove the following Lemma in Subsection~\ref{subsec:ent_trap}.
Put $\Sigma^+ \deq  \exp_{\Sigma}(-\eta \nabla F_{\gamma}(\Sigma))$.

\begin{lemma}\label{lem:ent_trap}
    Let $\lambda \deq {(2+\gamma)}^2 \,\sqrt\kappa$ and suppose that the step size satisfies $\eta \le \frac{2}{\lambda^2}$.
    If $\Sigma \in \mathcal{K}_{1/\lambda, \lambda}$, then so is $\Sigma^+$.
\end{lemma}
Throughout this section,
we thus use the notation $\lambda := (2 + \gamma)^2 \sqrt{\kappa}$. We also establish a couple of properties of our objective function in Subsection~\ref{subsec:ent_KL_facts}.

\begin{prop}\label{prop:ent_KL_facts}
Define $G : \mathcal{K}_{1/\lambda, \lambda} \to \R$ to take $\Sigma \mapsto \KL{\Sigma}{I_d}$.
Then, the following hold:
\begin{enumerate}
    \item $G$ is $2\lambda$-smooth with respect to Wasserstein geodesics.
    \item $F_\gamma$ is $1/(4\lambda^{7})$-strongly convex with respect to Euclidean geodesics on $\mc K_{1/\lambda,\lambda}$.
    \item $F_\gamma$ is strictly convex on all of $\psd$.
\end{enumerate}
\end{prop}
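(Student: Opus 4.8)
The plan is to reduce all three assertions to the closed form $G(\Sigma)=\KL{\Sigma}{I_d}=\frac12\tr\Sigma-\frac12\ln\det\Sigma-\frac d2$ recorded above, and to combine the resulting second-order information about $G$ with what is already known about the barycenter functional $F$: the strong convexity/smoothness estimate of Lemma~\ref{lem:strong convexity smoothness} and the Euclidean convexity of $F$ from \cite{bhatiajainlim2019bures}. Throughout write $F_\gamma=F+\gamma G$.

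\emph{Claim (1).} I would differentiate $G$ twice along a Bures--Wasserstein geodesic at its left endpoint. Fix $\Sigma,\Sigma'\in\mc K_{1/\sqrt\kappa,\sqrt\kappa}$, set $S:=\log_\Sigma\Sigma'=T_{\Sigma\to\Sigma'}-I_d$, and use $\Sigma_t=(I_d+tS)\,\Sigma\,(I_d+tS)$ from \eqref{eq:bures_geod}. Since $\tr\Sigma_t=\tr\Sigma+2t\tr(S\Sigma)+t^2\tr(S^2\Sigma)$ and $\ln\det\Sigma_t=\ln\det\Sigma+2\sum_i\ln(1+ts_i)$ (with $s_i$ the eigenvalues of $S$), one finds $\partial_t^2|_{t=0}G(\Sigma_t)=\tr(S^2\Sigma)+\tr(S^2)=\norm{S}_\Sigma^2+\norm{S}_{\rm F}^2=W_2^2(\Sigma,\Sigma')+\norm{S}_{\rm F}^2$. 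Because $S^2\succeq0$ and $I_d\preceq\lambda_{\min}(\Sigma)^{-1}\Sigma$, we get $\norm{S}_{\rm F}^2=\tr(S^2)\le\lambda_{\min}(\Sigma)^{-1}\tr(S^2\Sigma)=\lambda_{\min}(\Sigma)^{-1}W_2^2(\Sigma,\Sigma')$, and since $\lambda_{\min}(\Sigma)\ge1/\sqrt\kappa$ and $\kappa\ge1$ this yields $\partial_t^2|_{t=0}G(\Sigma_t)\le(1+\sqrt\kappa)\,W_2^2(\Sigma,\Sigma')\le 2\sqrt\kappa\,W_2^2(\Sigma,\Sigma')$, which is $2\sqrt\kappa$-smoothness along Wasserstein geodesics. (The identical bound holds at each point of the geodesic rather than only at $t=0$, because $\mc K_{1/\sqrt\kappa,\sqrt\kappa}$ is closed under geodesics by the remark after Theorem~\ref{thm:sqrt_lambda_min_concave} and geodesics have constant speed; integrating $t\mapsto G(\Sigma_t)$ then gives the finite-increment form of the smoothness inequality as well.)

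\emph{Claims (2) and (3).} The key preliminary observation is that $G$ is convex in the Euclidean geometry on all of $\psd$: from the closed form its Euclidean gradient is $\operatorname{D}G(\Sigma)=\frac12(I_d-\Sigma^{-1})$ (consistent with \eqref{eqn:KL_grad} via Fact~\ref{fact:bures_gradient}), so $\operatorname{D}^2G(\Sigma)[Y]=\frac12\,\Sigma^{-1}Y\Sigma^{-1}$ and hence $\langle Y,\operatorname{D}^2G(\Sigma)[Y]\rangle_{\rm F}=\frac12\,\norm{\Sigma^{-1/2}Y\Sigma^{-1/2}}_{\rm F}^2$, which is $\ge0$ for all $Y\in\sym$ and $>0$ when $Y\ne0$. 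For (2): $\mc K_{1/\sqrt\kappa,\sqrt\kappa}$ is Euclidean-convex, and on it Lemma~\ref{lem:strong convexity smoothness} with $(\alpha,\beta)=(1/\sqrt\kappa,\sqrt\kappa)$ gives $\langle Y,\operatorname{D}^2F(\Sigma)[Y]\rangle_{\rm F}\ge\frac{\alpha^3}{4\beta^4}\norm{Y}_{\rm F}^2=\frac{1}{4\kappa^{7/2}}\norm{Y}_{\rm F}^2$; adding the nonnegative quadratic form $\gamma\langle Y,\operatorname{D}^2G(\Sigma)[Y]\rangle_{\rm F}$ preserves this lower bound, and integrating it along Euclidean segments, which stay inside the convex set $\mc K_{1/\sqrt\kappa,\sqrt\kappa}$, yields $\frac{1}{4\kappa^{7/2}}$-strong convexity there. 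For (3): $F$ is convex on $\psd$ by \cite{bhatiajainlim2019bures} and $\gamma G$ is strictly convex on $\psd$ (positive-definite Euclidean Hessian on a convex set), so $F_\gamma=F+\gamma G$ is strictly convex on all of $\psd$.

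The only step that needs genuine care is the constant in (1): one should retain the decomposition $\partial_t^2 G=\norm{S}_\Sigma^2+\norm{S}_{\rm F}^2$ and pass from the Frobenius norm to the tangent-space norm using only the lower eigenvalue bound $\lambda_{\min}(\Sigma)\ge1/\sqrt\kappa$ — a coarser comparison would cost an extra power of $\kappa$. The remaining ingredients are routine matrix calculus together with the already-established facts about $F$.
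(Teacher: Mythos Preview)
Your proposal is correct and follows essentially the same approach as the paper. For (1) you compute $\partial_t^2|_{t=0}G(\Sigma_t)=\norm{S}_\Sigma^2+\norm{S}_{\rm F}^2$ and bound the Frobenius term via $\lambda_{\min}(\Sigma)\ge 1/\sqrt\kappa$, which is exactly the paper's computation written in the variable $S=T-I_d$ rather than $T$; for (2) and (3) both you and the paper add the (strictly) positive-definite Euclidean Hessian of $G$ to the strong convexity/convexity of $F$ furnished by Lemma~\ref{lem:strong convexity smoothness} and \cite{bhatiajainlim2019bures}.
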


With these facts, we can establish existence and uniqueness of $\Sigma^\star$ and prove Proposition~\ref{prop:ent_unique} in
Subsection~\ref{subsec:ent_unique}.

Next we prove smoothness and PL inequalities in Subsection~\ref{subsec:ent_PL_smoothness}.

\begin{lemma}[Smoothness]\label{lem:ent_smoothness}
    If $\Sigma \in \mathcal{K}_{1/\lambda, \lambda}$ and we take the step size at most $\eta \le \frac{2}{\lambda^2}$,
then
$$
F_{\gamma}(\Sigma^+) - F_{\gamma}(\Sigma) \leq - \frac{\eta}{2}\,   \|\nabla F_{\gamma}(\Sigma)\|_{\Sigma}^2\,.
$$
\end{lemma}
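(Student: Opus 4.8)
The plan is to assemble three facts already in hand: (i) the barycenter functional $F$ is $1$-smooth along geodesics on all of $\psd$ (Theorem~\ref{thm:better_properties_of_bary_fn}(1)); (ii) the map $G(\Sigma)=\KL{\Sigma}{I_d}$ is $2\sqrt\kappa$-smooth along Wasserstein geodesics on $\mathcal K_{1/\sqrt\kappa,\sqrt\kappa}$ (Proposition~\ref{prop:ent_KL_facts}(1)); and (iii) the box $\mathcal K_{1/\sqrt\kappa,\sqrt\kappa}$ is convex along generalized geodesics, hence in particular geodesically convex (Theorem~\ref{thm:sqrt_lambda_min_concave}). Summing (i) and (ii), $F_\gamma = F + \gamma G$ satisfies the descent inequality of Section~\ref{scn:geod_opt} with parameter $\beta := 1 + 2\gamma\sqrt\kappa$ along any geodesic that stays inside $\mathcal K_{1/\sqrt\kappa,\sqrt\kappa}$ (this is obtained by twice integrating the second-derivative bound $\partial_t^2 F_\gamma(\Sigma_t) \le \beta\, W_2^2(\Sigma_0,\Sigma_1)$, which is why the geodesic must remain in the region where the bound for $G$ is valid).

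\textbf{Applying this to the GD step.} I would next specialize to the geodesic from $\Sigma$ to $\Sigma^+ = \exp_\Sigma(-\eta\nabla F_\gamma(\Sigma))$, checking two points. First, that $-\eta\nabla F_\gamma(\Sigma)$ lies in the domain of the exponential map, i.e. $I_d - \eta\nabla F_\gamma(\Sigma)\succeq 0$: using $\nabla F_\gamma(\Sigma) = (1+\gamma)I_d - \int \GM(\Sigma^{-1},\cdot)\,\D P - \gamma\Sigma^{-1}$ (which follows from Fact~\ref{fact:grad_sq_dist} together with~\eqref{eqn:KL_grad}, and is exactly what underlies the update in Algorithm~\ref{ALG:RGD}), one gets $I_d - \eta\nabla F_\gamma(\Sigma) = (1-\eta(1+\gamma))I_d + \eta\int\GM(\Sigma^{-1},\cdot)\,\D P + \eta\gamma\Sigma^{-1}\succeq 0$, since $\eta = 1/(1+2\gamma\sqrt\kappa)\le 1/(1+\gamma)$ because $\kappa\ge 1$. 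Hence $\log_\Sigma\Sigma^+ = -\eta\nabla F_\gamma(\Sigma)$. Second, that the whole geodesic stays in $\mathcal K_{1/\sqrt\kappa,\sqrt\kappa}$: both endpoints lie in the box ($\Sigma$ by hypothesis, $\Sigma^+$ by Lemma~\ref{lem:ent_trap}), and by geodesic convexity of the box (Theorem~\ref{thm:sqrt_lambda_min_concave}) so does every point between them; therefore the $\beta$-smoothness inequality is in force along this geodesic.

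\textbf{Finishing.} It then remains to substitute into the descent inequality. Since $\log_\Sigma\Sigma^+ = -\eta\nabla F_\gamma(\Sigma)$ and $W_2^2(\Sigma,\Sigma^+) = \norm{\log_\Sigma\Sigma^+}_\Sigma^2 = \eta^2\norm{\nabla F_\gamma(\Sigma)}_\Sigma^2$,
\[
F_\gamma(\Sigma^+) - F_\gamma(\Sigma) \;\le\; -\eta\,\norm{\nabla F_\gamma(\Sigma)}_\Sigma^2 + \frac{\beta\eta^2}{2}\,\norm{\nabla F_\gamma(\Sigma)}_\Sigma^2 \;=\; -\eta\Bigl(1 - \frac{\beta\eta}{2}\Bigr)\,\norm{\nabla F_\gamma(\Sigma)}_\Sigma^2 .
\]
With $\eta = 1/\beta = 1/(1+2\gamma\sqrt\kappa)$ the prefactor is $\tfrac{1}{2\beta} = \tfrac{1}{2(1+2\gamma\sqrt\kappa)}$, which is exactly the claimed bound.

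\textbf{Main obstacle.} The arithmetic and the gradient identity are routine; the only real content is ensuring that the geodesic from $\Sigma$ to $\Sigma^+$ never leaves $\mathcal K_{1/\sqrt\kappa,\sqrt\kappa}$, so that the local smoothness of $G$ can be used along the entire geodesic rather than merely at $\Sigma$. This is precisely what Lemma~\ref{lem:ent_trap} (the endpoint stays feasible) and Theorem~\ref{thm:sqrt_lambda_min_concave} (the box is geodesically convex) are there to provide, and I expect this "feasibility of the connecting geodesic" step to be the point requiring the most care; everything else is the standard Riemannian descent lemma.
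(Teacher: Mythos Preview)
Your proof is correct and follows essentially the same route as the paper: combine the $1$-smoothness of $F$ with the $2\sqrt\kappa$-smoothness of $G$ from Proposition~\ref{prop:ent_KL_facts}(1) to get $(1+2\gamma\sqrt\kappa)$-smoothness of $F_\gamma$, then plug $\log_\Sigma\Sigma^+=-\eta\nabla F_\gamma(\Sigma)$ into the descent inequality with $\eta=1/\beta$. Your write-up is in fact more careful than the paper's, which simply asserts the smoothness of $F_\gamma$ without explicitly verifying that the connecting geodesic remains in $\mathcal K_{1/\sqrt\kappa,\sqrt\kappa}$ (where the bound on $G$ is valid) or that $-\eta\nabla F_\gamma(\Sigma)$ lies in the domain of $\exp_\Sigma$; your use of Lemma~\ref{lem:ent_trap} and Theorem~\ref{thm:sqrt_lambda_min_concave} to close this gap is exactly right.
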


\begin{lemma}[PL inequality] \label{lem:ent_PL}
    If $\Sigma \in \mathcal{K}_{1/\lambda, \lambda}$,
    then
    $$
    F_{\gamma}(\Sigma) - F_{\gamma}(\Sigma^\star) \leq  \frac{\lambda^8}{2} 
    \,\| \nabla F_{\gamma}(\Sigma) \|_{\Sigma}^2\,.
    $$
\end{lemma}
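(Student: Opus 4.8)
The plan is to obtain this Wasserstein Polyak-\L{}ojasiewicz inequality directly from the \emph{Euclidean} strong convexity of $F_\gamma$ recorded in Proposition~\ref{prop:ent_KL_facts}(2), together with the dictionary between the Euclidean and Bures-Wasserstein gradients supplied by Fact~\ref{fact:bures_gradient}. The guiding point is that on $\mathcal{K}_{1/\sqrt\kappa,\sqrt\kappa}$ the Euclidean and Bures-Wasserstein inner products are comparable, with comparison constant controlled by $\lambda_{\min}$, so that a Euclidean gradient-domination bound transfers to a Bures-Wasserstein one at the cost of a power of $\kappa$.

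Concretely, I would argue as follows. First observe that $\mathcal{K}_{1/\sqrt\kappa,\sqrt\kappa} = \{\Sigma \in \psd : \kappa^{-1/2} I_d \preceq \Sigma \preceq \kappa^{1/2} I_d\}$ is a convex subset of $\sym$, and that $\Sigma^\star \in \mathcal{K}_{1/\sqrt\kappa,\sqrt\kappa}$ by Proposition~\ref{prop:ent_unique}; hence the Euclidean segment joining $\Sigma$ to $\Sigma^\star$ stays inside $\mathcal{K}_{1/\sqrt\kappa,\sqrt\kappa}$, and the $\mu$-strong Euclidean convexity of $F_\gamma$ from Proposition~\ref{prop:ent_KL_facts}(2), with $\mu = 1/(4\kappa^{7/2})$, applies to the pair $(\Sigma,\Sigma^\star)$:
\begin{equation*}
    F_\gamma(\Sigma^\star) \;\ge\; F_\gamma(\Sigma) + \langle \operatorname{D}F_\gamma(\Sigma),\, \Sigma^\star - \Sigma\rangle_{\rm F} + \frac{\mu}{2}\,\norm{\Sigma^\star - \Sigma}_{\rm F}^2\,.
\end{equation*}
Rearranging and applying the elementary bound $\langle a,b\rangle_{\rm F} - \tfrac{\mu}{2}\,\norm{b}_{\rm F}^2 \le \tfrac{1}{2\mu}\,\norm{a}_{\rm F}^2$ (i.e.\ maximizing over the displacement) yields the Euclidean gradient domination $F_\gamma(\Sigma) - F_\gamma(\Sigma^\star) \le \tfrac{1}{2\mu}\,\norm{\operatorname{D}F_\gamma(\Sigma)}_{\rm F}^2 = 2\kappa^{7/2}\,\norm{\operatorname{D}F_\gamma(\Sigma)}_{\rm F}^2$. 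It then remains to translate between norms: by Fact~\ref{fact:bures_gradient} one has $\nabla F_\gamma(\Sigma) = 2\operatorname{D}F_\gamma(\Sigma)$, while for every symmetric $S$, $\norm{S}_\Sigma^2 = \tr(S^2\Sigma) \ge \lambda_{\min}(\Sigma)\,\norm{S}_{\rm F}^2 \ge \kappa^{-1/2}\,\norm{S}_{\rm F}^2$ because $\Sigma \in \mathcal{K}_{1/\sqrt\kappa,\sqrt\kappa}$. Taking $S = \operatorname{D}F_\gamma(\Sigma)$ gives $\norm{\operatorname{D}F_\gamma(\Sigma)}_{\rm F}^2 \le \kappa^{1/2}\,\norm{\operatorname{D}F_\gamma(\Sigma)}_\Sigma^2 = \tfrac{\sqrt\kappa}{4}\,\norm{\nabla F_\gamma(\Sigma)}_\Sigma^2$, and combining with the previous bound,
\begin{equation*}
    F_\gamma(\Sigma) - F_\gamma(\Sigma^\star) \;\le\; 2\kappa^{7/2}\cdot\frac{\sqrt\kappa}{4}\,\norm{\nabla F_\gamma(\Sigma)}_\Sigma^2 \;=\; \frac{\kappa^4}{2}\,\norm{\nabla F_\gamma(\Sigma)}_\Sigma^2\,,
\end{equation*}
which is the desired inequality.

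Given the preparatory results, the proof of Lemma~\ref{lem:ent_PL} itself is short, so I expect the real difficulty to sit in its inputs rather than in the combination above. The main obstacle is Proposition~\ref{prop:ent_KL_facts}(2): proving Euclidean strong convexity of $F_\gamma$ requires the Hessian estimate for the barycenter part (Lemma~\ref{lem:strong convexity smoothness}, itself a Riesz-Dunford calculus computation) together with the explicit, manifestly positive-definite Hessian $Y \mapsto \tfrac12\,\Sigma^{-1}Y\Sigma^{-1}$ of $\Sigma \mapsto \KL{\Sigma}{I_d}$. A secondary point to be careful with is that Proposition~\ref{prop:ent_KL_facts}(2) only asserts strong convexity \emph{on} $\mathcal{K}_{1/\sqrt\kappa,\sqrt\kappa}$, so convexity of that set, together with Proposition~\ref{prop:ent_unique} ensuring $\Sigma^\star$ lies in it, is what legitimizes applying the strong convexity inequality at the pair $(\Sigma,\Sigma^\star)$.
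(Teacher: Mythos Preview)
Your proof is correct and follows essentially the same approach as the paper: both start from the Euclidean $1/(4\kappa^{7/2})$-strong convexity of $F_\gamma$ on $\mathcal{K}_{1/\sqrt\kappa,\sqrt\kappa}$ (Proposition~\ref{prop:ent_KL_facts}(2)), use Fact~\ref{fact:bures_gradient} to pass between $\operatorname{D}F_\gamma$ and $\nabla F_\gamma$, and convert Frobenius to $\Sigma$-norms via $\lambda_{\min}(\Sigma)\ge\kappa^{-1/2}$. The only cosmetic difference is ordering: you first derive the Euclidean PL bound $F_\gamma(\Sigma)-F_\gamma(\Sigma^\star)\le 2\kappa^{7/2}\,\norm{\operatorname{D}F_\gamma(\Sigma)}_{\rm F}^2$ and then convert norms, whereas the paper applies the weighted Cauchy--Schwarz $\langle A,B\rangle\le\norm{A}_\Sigma\,\norm{B}_{\Sigma^{-1}}$ inside the strong-convexity inequality and then uses Young's inequality---but the ingredients and the resulting constant $\kappa^4/2$ are identical.
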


The main theorem now follows by combining these lemmas.

\begin{proof}[Proof of Theorem~\ref{thm:ent_main}]
    By Lemma~\ref{lem:ent_trap}, the Lemmas~\ref{lem:ent_smoothness} and~\ref{lem:ent_PL} hold throughout
    the optimization trajectory.
    Then,
    \begin{align*}
        F_{\gamma}(\Sigma_{t +1}) - F_{\gamma}(\Sigma^\star) &=
        F_{\gamma}(\Sigma_{t + 1}) - F_{\gamma}(\Sigma_t) + F_{\gamma}(\Sigma_t) -  F_{\gamma}(\Sigma^\star) \\
        &\leq - \frac{2}{\lambda^2}\, \| \nabla F_{\gamma}(\Sigma_t)\|_{\Sigma_t}^2
        +  F_{\gamma}(\Sigma_t) -  F_{\gamma}(\Sigma^\star) \\
        &\leq \Bigl( 1 - \frac{4}{\lambda^{10}} \Bigr)
        \, \{F_{\gamma}(\Sigma_t) - F_{\gamma}(\Sigma^\star)\}\,.
    \end{align*}
    Iterating yields the result.
\end{proof}

\subsection{Trapping the iterates}\label{subsec:ent_trap}

\begin{proof}[Proof of Lemma~\ref{lem:ent_trap}]
Combining~\eqref{eqn:KL_grad} with the formula for the gradient of the squared Bures--Wasserstein distance (Fact~\ref{fact:grad_sq_dist} in Appendix~\ref{scn:facts}),
we see that in fact
$$
-\nabla F_{\gamma}(\Sigma) = \int \log_{\Sigma}(\Sigma') \, \D P(\Sigma') + \gamma
\log_{\Sigma}(\Sigma^{-1})\,.
$$
Then, $\Sigma^+$ is the generalized barycenter of
\begin{align*}
    P_{\gamma,\eta}
    = (1-\eta-\gamma\eta) \, \delta_\Sigma + \eta \, P + \gamma\eta \, \delta_{\Sigma^{-1}}
    = (1-\eta-\gamma\eta) \, \delta_\Sigma + (\eta + \gamma\eta) \, \tilde P_{\gamma,\eta}\,,
\end{align*}
where
\begin{align*}
    \tilde P_{\gamma,\eta}
    &= \frac{\eta}{\eta +\gamma\eta} \, P + \frac{\gamma\eta}{\eta+\gamma\eta} \, \delta_{\Sigma^{-1}}\,.
\end{align*}
Let us search for $\lambda \ge 0$ such that if $\Sigma$ has eigenvalues in the range $[1/\lambda,\lambda]$, then so does $\Sigma^+$.
Suppose that $\lambda \ge \sqrt{\kappa}$.
First of all, by the generalized
geodesic convexity of $\lambda_{\max}$, we have $\lambda_{\max}(\Sigma^+) \le (1-\eta) \, \lambda + \eta \,\sqrt\kappa \le \lambda$.
Next, we calculate
\begin{align*}
    \beta
    &\deq \lambda \ge
   \frac{\eta}{\eta + \gamma\eta} \,\kappa^{1/2} + \frac{\gamma\eta}{\eta+\gamma\eta} \,\lambda\geqslant  
    \int \lambda_{\max} \, \D \tilde P_{\gamma,\eta}, \\
    &\Bigl(\int \sqrt{\lambda_{\min}} \, \D \tilde P_{\gamma,\eta}\Bigr)^2
    \ge \Bigl( \frac{\eta}{\eta +\gamma\eta} \, \kappa^{-1/4} + \frac{\gamma\eta}{\eta + \gamma \eta} \, \lambda^{-1/2}\Bigr)^2\,
    =: \alpha \, .
\end{align*}
Suppose, that in addition
to $\lambda \ge \sqrt{\kappa}$,
we can solve the equation
$\lambda^{-1} = 
\alpha /4$.
Then if we take
$\eta \le \frac{\alpha}{2\beta}
= \frac{2}{\lambda^2}$
we can
apply Theorem~\ref{thm:sqrt_lambda_min_concave} (see also the discussion in Appendix~\ref{scn:gen_geod_cvxty}) to deduce that $\lambda_{\min}(\Sigma^+) \ge \alpha/4 = \lambda^{-1}$.
Therefore, it suffices to choose $\lambda$ so that
both
\begin{align*}
\lambda \ge \sqrt{\kappa}\, ,
\quad \textrm{ and }
\quad 
   \lambda^{-1} = \frac{1}{4}\, \Bigl( \frac{\eta}{\eta +\gamma\eta} \, \kappa^{-1/4} + \frac{\gamma\eta}{\eta + \gamma \eta} \, \lambda^{-1/2}\Bigr)^2\,
.
\end{align*}
It can be verified that these
equations are solved by taking
$\lambda = (2 + \gamma)^2\, \sqrt{\kappa}$.
\end{proof}

\subsection{Properties of the KL divergence}\label{subsec:ent_KL_facts}

\begin{proof}[Proof of Proposition~\ref{prop:ent_KL_facts}]
For the first claim, fix $\Sigma_0, \Sigma_1 \in \mathcal{K}_{1/\lambda, \lambda}$
and let $T$ denote the transport map from $\Sigma_0$ to $\Sigma_1$.
Then put
$$
\Sigma_s  \deq  \bigl((1-s) I_d + s T\bigr) \Sigma_0 \bigl((1-s) I_d + s T\bigr)
= {(1 - s)}^2 \, \Sigma_0 + s^2\, \Sigma_1 + s\,(1 - s)\,(\Sigma_0 T
+ T \Sigma_0)\,.
$$ In other words, ${(\Sigma_s)}_{s \in [0,1]}$
is the Bures--Wasserstein geodesic between $\Sigma_0$
and $\Sigma_1$ (see~\eqref{eq:bures_geod}). It suffices to show (see Appendix~\ref{scn:geod_opt}) that
$$
\partial_s^2 \KL{\Sigma_s}{I_d} |_{s = 0} \leq 2\lambda \,W_2^2(\Sigma_0, \Sigma_1).
$$ Since $\KL{\Sigma_s}{I_d} = \frac{1}{2}(\tr(\Sigma_s) - \ln \det \Sigma_s + \text{constant})$,
we analyze the first two terms separately.
First, we note that 
$$
\partial_s^2 \tr(\Sigma_s)|_{s = 0} = 2\tr(\Sigma_0 + \Sigma_1 - 2 \Sigma_0
T) = 2W_2^2(\Sigma_0, \Sigma_1)\,,
$$
where the equality follows from~\eqref{eq:alternate_w2_gaussian}.
For the second term we start by observing
that
$$
-\ln \det \Sigma_s = -\ln \det \Sigma_0 - 2\ln \det\bigl((1 - s)I_d
+ s T\bigr)\,.
$$
Using this identity we see that
$$
-\partial_s^2\bigl( \frac{1}{2} \ln \det (\Sigma_s)\bigr)\big\vert_{s = 0}
= \tr \bigl({(T - I_d)}^2\bigr) \leq \lambda \, \norm{T - I_d}_{\Sigma_0}^2
= \lambda \,W_2^2(\Sigma_0, \Sigma_1)\,.
$$ Putting these bounds together yields the result.

For the second claim, using the convexity of $-\ln \det$, it follows that the Euclidean Hessian of $F_\gamma$ satisfies $\on D^2 F_\gamma \succeq \on D^2 F$, where $F$ is the barycenter functional.
It follows from Lemma~\ref{lem:strong convexity smoothness} that
\begin{align*}
    \on D^2 F_\gamma \succeq \frac{1}{4\lambda^{7}} \, I_d
\end{align*}
on the set $\mc K_{1/\lambda,\lambda}$.

Finally, for the third claim we can use the convexity of $F$ and the strict convexity of $-\ln \det$ (together with $\gamma > 0$) to argue that $\on D^2 F_\gamma \succ 0$ on $\psd$.
\end{proof}

\subsection{Existence and uniqueness of the minimizer}\label{subsec:ent_unique}

\begin{proof}[Proof of Proposition~\ref{prop:ent_unique}]
First, we prove that when restricted to $\psd$, the functional $F_\gamma$ has a unique minimizer.
Let $\lambda = (2 + \gamma)^2\sqrt{\kappa}$
and $\eta \le \frac{4}{\lambda^2}$ be as in Lemma~\ref{lem:ent_trap} and let $H \colon \mathcal{K}_{1/\lambda, \lambda} \to \mathbb{S}_{++}^d$
take
$$
\Sigma \mapsto \exp_{\Sigma}\bigl(-\eta \,\nabla F_{\gamma}(\Sigma)\bigr)\,.
$$ Then by Lemma~\ref{lem:ent_trap},
$H$ maps $\mathcal{K}_{1/\lambda, \lambda}$ to itself.
We may thus apply
Brouwer's fixed point theorem to guarantee a fixed point of $H$ in $\mathcal{K}_{1/\lambda,
\lambda}$, call it $\Sigma^\star$. Note that this means precisely that $\nabla F_\gamma(\Sigma^\star) = 0$. By the equivalence of Euclidean and Bures--Wasserstein gradients (Fact~\ref{fact:bures_gradient} in Appendix~\ref{scn:facts}), we conclude that $\on D F_\gamma(\Sigma^\star) = 0$ as well. By the strict convexity of $F_\gamma$ (the third claim of Proposition~\ref{prop:ent_KL_facts}), we deduce that $\Sigma^\star$ is the unique minimizer of $F_\gamma$ on $\psd$ (actually, on all of $\mbb S_+^d$, since $-\ln \det$ blows up if the determinant approaches $0$).

Next, let $b$ be a probability measure on $\R^d$ which has mean $m$ and covariance matrix $\Sigma$. Let $\bar b$ denote the centered version of $b$. We now claim that
\begin{align*}
    F_\gamma(b)
    &\ge F_\gamma(\bar b)
    \ge F_\gamma\bigl(\mc N(0, \Sigma)\bigr)
    \ge F_\gamma\bigl(\mc N(0, \Sigma^\star)\bigr)\,.
\end{align*}
The first inequality is due to Remark~\ref{rmk:noncentered_entropically_reg} and it is strict unless $b = \bar b$. The second inequality follows from Fact~\ref{fact:low_bdd_gaussian} in Appendix~\ref{scn:facts}, together with the classical fact that the Gaussian maximizes entropy among all centered distributions with the same covariance matrix; this latter fact is proven in~\cite[Theorem 8.6.5]{coverthomas2006infotheory}, and it also shows that the inequality is strict unless $\bar b = \mc N(0, \Sigma)$. Finally, the last inequality is what we have shown above, and it is also strict unless $\Sigma=\Sigma^\star$.
\end{proof}

\subsection{Smoothness and PL inequalities}\label{subsec:ent_PL_smoothness}

\begin{proof}[Proof of Lemma~\ref{lem:ent_smoothness}]
    By Lemma~\ref{lem:ent_trap}, Proposition~\ref{prop:ent_KL_facts}, and the $1$-geodesic smoothness of the barycenter functional~\cite[Theorem 7]{chewietal2020buresgd} we deduce that $F_\gamma = F + \gamma G$ is $(1 + 2\gamma \lambda)$-smooth, i.e.,
    \begin{align*}
    F_{\gamma}(\Sigma^+) - F_{\gamma}(\Sigma) &\leq \langle \nabla F_{\gamma}(\Sigma), \log_{\Sigma}(
    \Sigma^+) \rangle_{\Sigma} + \frac{1 + 2\gamma\lambda}{2}\, W_2^2(\Sigma, \Sigma^+) \,.
    \end{align*}
    Substituting in $\log_\Sigma(\Sigma^+) = -\eta \nabla F_\gamma(\Sigma)$ and using $\frac{2}{\lambda^2} \le \frac{1}{1+2\gamma\lambda}$ yields the result.
\end{proof}

\begin{proof}[Proof of Lemma~\ref{lem:ent_PL}]
    From the second claim in Proposition~\ref{prop:ent_KL_facts}, 
    and since $\mathcal{K}_{1/\lambda, \lambda}$ is convex with respect to Euclidean
    geodesics, we see that for $\Sigma \in \mathcal{K}_{1/\lambda, \lambda}$
    \begin{align*}
        F_{\gamma}(\Sigma) - F_{\gamma}(\Sigma^\star)
        &\leq \langle \on D F_{\gamma}(\Sigma), \Sigma - \Sigma^\star \rangle
    - \frac{1}{8\lambda^{7}} \,\|\Sigma - \Sigma^\star\|_{\rm F}^2 \\
    &= \frac{1}{2} \, \langle \nabla F_{\gamma}(\Sigma), \Sigma - \Sigma^\star \rangle
    - \frac{1}{8\lambda^{7}} \,\|\Sigma - \Sigma^\star\|_{\rm F}^2\,,
    \end{align*}
    where the last line uses Fact~\ref{fact:bures_gradient} in Appendix~\ref{scn:facts}.
    Next we observe that by combining Cauchy--Schwarz with Young's inequality we get
    that for all $r > 0$,
    \begin{align*}
    \frac{1}{2} \, \langle \nabla F_{\gamma}(\Sigma), \Sigma - \Sigma^\star \rangle
    &\le \frac{1}{2} \, \norm{\nabla F_{\gamma}(\Sigma)}_{\Sigma}\, \norm{\Sigma - \Sigma^\star}_{\Sigma^{-1}}
    \le \frac{r}{16} \,\|\nabla F_{\gamma}(\Sigma)\|_{\Sigma}^2
    + \frac{1}{r} \, \|\Sigma - \Sigma^\star\|_{\Sigma^{-1}}^2 \\
    &\leq  \frac{r}{16} \,  \|\nabla F_{\gamma}(\Sigma)\|_{\Sigma}^2
    + \frac{\lambda}{r} \, \|\Sigma  - \Sigma^\star\|_{\rm F}^2\,.
    \end{align*} Putting $r = 8\lambda^8$ yields the result.
\end{proof}

\section{Proofs for geometric medians}\label{app:median}

\subsection{Convergence guarantee for smoothed Riemannian gradient descent}\label{scn:proofs_median}

We begin with the proof of Proposition~\ref{prop:basic_prop_median}.

\begin{proof}[Proof of Proposition~\ref{prop:basic_prop_median}]
    Let $F : \mc P_2(\R^d) \to \R$ be the geometric median functional, defined via $F(b)  \deq  \int W_2(b, \cdot) \, \D P$. If we regard $F$ as a functional over the Bures--Wasserstein space, then by continuity of $F$ and compactness of the set $\{\norm\cdot \le \lambda_{\max}\} \subseteq \mbb S_+^d$, there exists a minimizer $\Sigstar_{\rm median}$ of $F$ on this set. We will show that the Gaussian $b^\star_{\rm median}$ with covariance $\Sigstar_{\rm median}$ minimizes $F$ over all of Wasserstein space.
    
    First, recall the map $\clip^{\lambda_{\max}}$ in Proposition~\ref{thm:eig_clip_bures_contraction}, which is a contraction w.r.t.\ the Bures--Wasserstein metric.
    Then, for any $\Sigma \in \mbb S_+^d$, it holds that
    \begin{align*}
        F(\Sigma)
        = \int W_2(\Sigma, \Sigma') \, \D P(\Sigma')
        &\ge \int W_2(\clip^{\lambda_{\max}}\Sigma, \Sigma') \, \D P(\Sigma') \\
        &\ge \int W_2(\Sigstar_{\rm median}, \Sigma') \, \D P(\Sigma')
        = F(\Sigstar_{\rm median})\,,
    \end{align*}
    so that $\Sigstar_{\rm median}$ minimizes $F$ over $\mbb S_+^d$.
    
    Next, using Fact~\ref{fact:low_bdd_gaussian} in Appendix~\ref{scn:facts}, if $b \in \mc P_2(\R^d)$ has covariance matrix $\Sigma$, then
    \begin{align*}
        F(b)
        &= \int W_2(b, \cdot) \, \D P
        \ge \int W_2(\gamma_{0,\Sigma}, \cdot) \, \D P
        \ge \int W_2(\Sigstar_{\rm median}, \cdot) \, \D P
        = F(b^\star_{\rm median})\,,
    \end{align*}
    so that $b^\star_{\rm median}$ minimizes $F$ over $\mc P_2(\R^d)$.
    By definition, $\Sigstar_{\rm median}$ has eigenvalues upper bounded by $\lambda_{\max}$, which completes the proof.
\end{proof}

As the main difficulty in the analysis of the geometric median is the lack of both convexity and smoothness, we now pause to justify these remarks.

\begin{rema}\label{rmk:w2_horrible}
    We claim that the unsquared Wasserstein distance $W_2(\cdot, \Sigma')$ is neither geodesically convex nor geodesically smooth. For the former statement, note that the geodesic convexity of $W_2(\cdot,\Sigma')$ would imply the geodesic convexity of $W_2^2(\cdot,\Sigma')$, but the squared Wasserstein distance is known to not be geodesically convex (in general, it is not even \emph{semi-convex}, see~\cite[Example 9.1.5]{ambrosio2008gradient}; for a Gaussian example, see~\cite[Appendix B.2]{chewietal2020buresgd}). In fact, unsquared metrics are almost never geodesically smooth; if this were the case, then there would exist a constant $\beta < \infty$ for which $W_2(\Sigma,\Sigma') \le \frac{\beta}{2} \, W_2^2(\Sigma,\Sigma')$, which is manifestly false.
    
    Moreover, the function $W_2(\cdot,\Sigma')$ is neither Euclidean convex nor Euclidean smooth. To see this, observe that in one dimension we have $W_2(\Sigma,\Sigma') = \abs{\sqrt{\Sigma}-\sqrt{\Sigma'}}$ (see Fact~\ref{fact:diagonal} in Appendix~\ref{scn:facts}), which is neither convex nor smooth. It is notable that for this one-dimensional example, $W_2(\Sigma,\Sigma')$ is convex with respect to the variable $\sqrt{\Sigma}$, but it appears that reparameterization does not help in general; numerics indicate that the function $A \mapsto W_2(A^2, \Sigma')$ is not Euclidean convex on $\psd$.
\end{rema}

We now proceed with the analysis of the smoothed Riemannian gradient descent algorithm given as Algorithm~\ref{ALG:smoothed}. Recall that $F_\varepsilon$ denotes the smoothed geometric median functional.
The first step is to show that the smoothing does not affect the objective significantly.

\begin{lemma}\label{lem:smoothing_the_fn}
    For any $\Sigma \in \psd$, we have $\abs{F(\Sigma) - F_\varepsilon(\Sigma)} \le \varepsilon$.
\end{lemma}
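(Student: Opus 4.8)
The plan is to reduce the claim to a pointwise scalar inequality and then integrate against the probability measure $P$. By definition, for every $\Sigma' \in \supp P$ we have $W_{2,\varepsilon}(\Sigma,\Sigma') = \sqrt{W_2^2(\Sigma,\Sigma') + \varepsilon^2} \ge W_2(\Sigma,\Sigma')$, so $F_\varepsilon(\Sigma) \ge F(\Sigma)$ and in particular $F(\Sigma) - F_\varepsilon(\Sigma) \le 0$. It therefore suffices to bound the difference from the other side.

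The key step is the elementary estimate: for all $a \ge 0$,
\begin{align*}
    0 \le \sqrt{a^2 + \varepsilon^2} - a = \frac{\varepsilon^2}{\sqrt{a^2 + \varepsilon^2} + a} \le \frac{\varepsilon^2}{\varepsilon} = \varepsilon\,.
\end{align*}
Applying this with $a = W_2(\Sigma, \Sigma')$ gives $W_{2,\varepsilon}(\Sigma,\Sigma') - W_2(\Sigma,\Sigma') \le \varepsilon$ for every $\Sigma'$.

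Integrating this bound with respect to $P$, and using that $P$ is a probability measure, yields
\begin{align*}
    F_\varepsilon(\Sigma) - F(\Sigma)
    = \int \bigl( W_{2,\varepsilon}(\Sigma,\cdot) - W_2(\Sigma,\cdot) \bigr) \, \D P
    \le \int \varepsilon \, \D P
    = \varepsilon\,.
\end{align*}
Combined with the lower bound $F_\varepsilon(\Sigma) - F(\Sigma) \ge 0$ above, this gives $\abs{F(\Sigma) - F_\varepsilon(\Sigma)} \le \varepsilon$, as claimed. There is no real obstacle here; the only point to be careful about is that the bound $\varepsilon$ is uniform in $\Sigma'$, so that it survives integration against $P$ regardless of the support of $P$.
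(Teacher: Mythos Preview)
Your proof is correct and follows essentially the same approach as the paper: establish the pointwise bound $0 \le W_{2,\varepsilon}(\Sigma,\Sigma') - W_2(\Sigma,\Sigma') \le \varepsilon$ via an elementary scalar inequality, then integrate against $P$. The paper phrases the scalar step slightly differently (using $|\sqrt{x}-\sqrt{y}| \le \sqrt{|x-y|}$ rather than rationalizing the difference), but the argument is the same.
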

\begin{proof}
    This follows from
    \begin{align*}
        \abs{W_2(\Sigma,\Sigma') - \sqrt{W_2^2(\Sigma, \Sigma') + \varepsilon^2}}
        &= \abs{\sqrt{W_2^2(\Sigma,\Sigma')} - \sqrt{W_2^2(\Sigma, \Sigma') + \varepsilon^2}}
        \le \varepsilon
    \end{align*}
    and integrating.
\end{proof}

We next show that replacing $W_2$ by $W_{2,\varepsilon}$ indeed yields smoothness.

\begin{lemma}\label{lem:med_geod_smooth}
    The functional $F_\varepsilon$ is $1/\varepsilon$-geodesically smooth.
\end{lemma}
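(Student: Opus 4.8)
The plan is to reduce to a single target measure and then differentiate the outer square root. Since $F_\varepsilon = \int W_{2,\varepsilon}(\cdot,\nu)\,\D P(\nu)$ and the bound we are aiming for, $W_2^2(\mu_0,\mu_1)$, does not depend on $\nu$, it suffices to prove that $W_{2,\varepsilon}(\cdot,\nu)$ is $1/\varepsilon$-geodesically smooth for each fixed $\nu$; averaging over $P$ then gives the claim for $F_\varepsilon$. Fix a constant-speed geodesic $(\mu_t)_{t\in[0,1]}$ in Bures--Wasserstein space and write $g(t) := W_2^2(\mu_t,\nu)$ and $h(t) := W_{2,\varepsilon}(\mu_t,\nu) = \sqrt{g(t)+\varepsilon^2}$. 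By the second-order characterization of geodesic smoothness recalled in Appendix~\ref{scn:geod_opt}, it is enough to show $h''(0) \le \tfrac1\varepsilon\, W_2^2(\mu_0,\mu_1)$.

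Writing $h = \phi\circ g$ with $\phi(x) = \sqrt{x+\varepsilon^2}$, the chain rule gives $h'' = \phi''(g)\,(g')^2 + \phi'(g)\,g''$. Since $\phi$ is concave, $\phi''\le 0$, so the first term is nonpositive and $h''(0) \le \phi'(g(0))\,g''(0)$, where $\phi'(g(0)) = \tfrac{1}{2\sqrt{g(0)+\varepsilon^2}} \in \bigl(0,\tfrac{1}{2\varepsilon}\bigr]$. To control $g''(0)$ I would invoke that the barycenter functional for $P = \delta_\nu$, namely $\tfrac12\,W_2^2(\cdot,\nu)$, is $1$-geodesically smooth (item~1 of Theorem~\ref{thm:properties_of_bary_fn}, i.e.~\cite[Theorem~7]{chewietal2020buresgd}); equivalently $g''(0) \le 2\,W_2^2(\mu_0,\mu_1)$. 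If $g''(0)\ge 0$ this yields $h''(0) \le \tfrac{1}{2\varepsilon}\,g''(0) \le \tfrac1\varepsilon\,W_2^2(\mu_0,\mu_1)$, and if $g''(0) < 0$ then $h''(0) \le \phi'(g(0))\,g''(0) < 0 \le \tfrac1\varepsilon\,W_2^2(\mu_0,\mu_1)$; either way the desired bound holds, and integrating over $P$ finishes the proof.

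The argument is essentially routine and I do not anticipate a genuine obstacle; the only point requiring slight care is that one cannot uniformly replace $\sqrt{g(0)+\varepsilon^2}$ by $\varepsilon$ in the denominator of $\phi'(g(0))$ when the sign of $g''(0)$ is unknown, which is why the short case distinction above is needed (one could alternatively phrase the whole thing through the tangent-line inequality for the concave function $\phi$, but the same sign check reappears there). For completeness one should also note that $g$ is twice differentiable along the geodesic in the relevant sense: $W_2^2(\cdot,\nu)$ is differentiable on Bures--Wasserstein space with gradient $-2\log_\mu\nu$ by Fact~\ref{fact:grad_sq_dist}, and the second-order inequality furnished by \cite[Theorem~7]{chewietal2020buresgd} is exactly the input we use, so no further regularity needs to be established.
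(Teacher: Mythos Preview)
Your proposal is correct and follows essentially the same route as the paper: both compute the second derivative of $\sqrt{W_2^2+\varepsilon^2}$ along a geodesic via the chain rule, discard the nonpositive term coming from the concavity of $x\mapsto\sqrt{x+\varepsilon^2}$, and bound the remaining term using the $1$-geodesic smoothness of $\tfrac12 W_2^2$ together with $1/W_{2,\varepsilon}\le 1/\varepsilon$. Your explicit case split on the sign of $g''(0)$ is a point the paper leaves implicit in its underbrace notation, so if anything your write-up is slightly more careful on that step.
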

\begin{proof}
Recall from Theorem~\ref{thm:properties_of_bary_fn} that one-half of the squared Wasserstein distance is $1$-smooth.
This means that for any $W_2$ geodesic ${(\Sigma_t)}_{t\in\R}$, the following Hessian bound holds:
\begin{align*}
    \frac{1}{2} \,\partial_t^2|_{t=0} W_2^2(\Sigma_t, \Sigma')
    &\le \norm{\dot \Sigma_0}^2_{\Sigma_0}\,.
\end{align*}
Here, $\dot\Sigma_0$ denotes the Bures--Wasserstein tangent vector, see the end of Appendix~\ref{scn:bures_geometry}.
We use this to compute the smoothness of $F_\varepsilon$.
Riemannian calculus yields
\begin{align*}
    \partial_t F_\varepsilon(\Sigma_t)
    &= \int \frac{\partial_t W_2^2(\Sigma_t, \Sigma')}{2W_{2,\varepsilon}(\Sigma_t, \Sigma')} \, \D P(\Sigma')\,, \\
    \partial_t^2|_{t=0} F_\varepsilon(\Sigma_t)
    &= \int \Bigl[ \frac{\partial_t^2|_{t=0} W_2^2(\Sigma_t, \Sigma')}{2W_{2,\varepsilon}(\Sigma_0,\Sigma')} - \frac{{\{\partial_t|_{t=0} W_2^2(\Sigma_t, \Sigma')\}}^2}{4W_{2,\varepsilon}^3(\Sigma_0,\Sigma')} \Bigr] \, \D P(\Sigma')\,.
\end{align*}
The second term is non-positive.
For the first term,
\begin{align*}
    \int \frac{\partial_t^2|_{t=0} W_2^2(\Sigma_t, \Sigma')}{2W_{2,\varepsilon}(\Sigma_0,\Sigma')} \, \D P(\Sigma')
    &=\int \underbrace{\frac{1}{W_{2,\varepsilon}(\Sigma_0,\Sigma')}}_{\le 1/\varepsilon} \, \underbrace{\frac{1}{2} \, \partial_t^2|_{t=0} W_2^2(\Sigma_t, \Sigma')}_{\le \norm{\dot\Sigma_0}_{\Sigma_0}^2} \, \D P(\Sigma')
    \le \frac{1}{\varepsilon} \, \norm{\dot \Sigma_0}_{\Sigma_0}^2\,.
\end{align*}
Hence, $F_\varepsilon$ is $1/\varepsilon$-smooth.
\end{proof}

In order to proceed with the analysis, we must study the dynamics of the smoothed Riemannian gradient descent algorithm.
To study these dynamics, it is helpful to again adopt the notation and calculus of general Wasserstein space.
The Wasserstein gradient of $F_\varepsilon$ is
\begin{align*}
    \nabla F_\varepsilon(b)
    &= -\int \frac{T_{b\to\mu} - {\id}}{W_{2,\varepsilon}(b,\mu)} \, \D P(\mu)\,,
\end{align*}
and one step of the Wasserstein gradient descent iteration with step size $\eta$ is
\begin{align*}
    b^+
    & \deq  \exp\bigl(- \eta \nabla F_\varepsilon(b)\bigr)
    = \Bigl[ {\id} + \eta \int \frac{T_{b\to\mu} - \id}{W_{2,\varepsilon}(b,\mu)} \, \D P(\mu) \Bigr]_\# b\,.
\end{align*}
We will rewrite this in the following way.
Define the weight
\begin{align*}
    \rho(\mu)
     \deq  \frac{{W_{2,\varepsilon}(b,\mu)}^{-1}}{\int {W_{2,\varepsilon}(b,\cdot)}^{-1} \, \D P}\,.
\end{align*}
Then,
\begin{align*}
    &{\id} + \eta \int \frac{T_{b\to\mu} - \id}{W_{2,\varepsilon}(b,\mu)} \, \D P(\mu) \\
    &\quad = \Bigl(1 - \eta \int W_{2,\varepsilon}(b,\cdot)^{-1} \, \D P \Bigr) \, {\id} + \Bigl(\eta \int W_{2,\varepsilon}(b,\cdot)^{-1}\,\D P\Bigr) \int T_{b\to\mu} \, \rho(\mu) \, \D P(\mu)\,.
\end{align*}
Since $\int {W_{2,\varepsilon}(b,\cdot)}^{-1} \, \D P \le 1/\varepsilon$, this is a convex combination of two terms if $\eta \le \varepsilon$.
Let us call the weights $1-\lambda$ and $\lambda$ respectively.
If we define the probability measure $\tilde P  \deq  (1-\lambda) \delta_b + \lambda \rho P$, then this can also be written as
\begin{align*}
    b^+
    &= \Bigl( \int T_{b\to\mu} \, \D \tilde P(\mu) \Bigr)_\# b\,.
\end{align*}

This expression proves the following fact (see also Appendix~\ref{scn:gen_geod_cvxty}).

\begin{lemma}
    The next iterate $b^+$ of smoothed Riemannian gradient descent starting at $b$ (with step size $\eta$) is a generalized barycenter of the distribution $\tilde P$ with base $b$.
\end{lemma}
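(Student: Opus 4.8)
The plan is to unwind the definitions and observe that the two formulas coincide; no real work is needed beyond the rearrangement already displayed above. Recall from Appendix~\ref{scn:gen_geod_cvxty} (cf.\ the proof of Theorem~\ref{thm:sqrt_lambda_min_concave}) that the generalized barycenter of a probability measure $\tilde P$ on Wasserstein space with base $b$ is by definition $\exp_b\bigl(\int \log_b \mu \, \D \tilde P(\mu)\bigr) = \exp_b\bigl(\int (T_{b\to\mu} - {\id}) \, \D \tilde P(\mu)\bigr)$, which---since $\tilde P$ is a probability measure, so the ${\id}$ contributions integrate to ${\id}$---equals $\bigl[\int T_{b\to\mu} \, \D \tilde P(\mu)\bigr]_\# b$. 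Thus it suffices to check that the computation above genuinely rewrites the smoothed Riemannian gradient descent update $b^+ = \exp_b(-\varepsilon\nabla F_\varepsilon(b))$ in exactly this form.

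First I would verify that $-\varepsilon \nabla F_\varepsilon(b)$ lies in the domain of $\exp_b$, so that $b^+$ is well-defined. From the gradient formula for $F_\varepsilon$, the map ${\id} - \varepsilon\nabla F_\varepsilon(b) = {\id} + \varepsilon\int (T_{b\to\mu} - {\id})\, W_{2,\varepsilon}(b,\mu)^{-1} \, \D P(\mu)$ is a convex combination of the optimal transport maps $T_{b\to\mu}$ (each the gradient of a convex function by Brenier's theorem) and the identity ${\id} = \nabla(\norm\cdot^2/2)$, since the total coefficient $\varepsilon\int W_{2,\varepsilon}(b,\cdot)^{-1}\,\D P$ lies in $[0,1]$ by $W_{2,\varepsilon} \ge \varepsilon$. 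A convex combination of gradients of convex functions is again such a gradient, so Brenier's theorem guarantees ${\id} - \varepsilon\nabla F_\varepsilon(b)$ is a valid transport map and $b^+ = [{\id} - \varepsilon\nabla F_\varepsilon(b)]_\# b$.

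Then I would carry out the bookkeeping exactly as displayed: set $\lambda := \varepsilon\int W_{2,\varepsilon}(b,\cdot)^{-1}\,\D P \in [0,1]$ and the weight $\rho(\mu) := W_{2,\varepsilon}(b,\mu)^{-1}/\int W_{2,\varepsilon}(b,\cdot)^{-1}\,\D P$; note $\rho P$ is a probability measure by construction of $\rho$, hence $\tilde P := (1-\lambda)\delta_b + \lambda\rho P$ is one too. Splitting the transport-map average into its $\delta_b$-part (weight $1-\lambda$, with $T_{b\to b} = {\id}$) and its $\rho P$-part (weight $\lambda$) yields ${\id} - \varepsilon\nabla F_\varepsilon(b) = \int T_{b\to\mu}\,\D\tilde P(\mu)$, hence $b^+ = \bigl[\int T_{b\to\mu}\,\D\tilde P(\mu)\bigr]_\# b$, which is precisely the generalized barycenter of $\tilde P$ with base $b$. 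There is essentially no obstacle here---the statement is an identity---and the only point requiring any care is the domain-of-$\exp_b$ check, i.e.\ the convex-combination observation together with $\lambda \le 1$, which is exactly the inequality $\int W_{2,\varepsilon}(b,\cdot)^{-1}\,\D P \le 1/\varepsilon$ already noted in the text.
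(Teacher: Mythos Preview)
Your proposal is correct and follows essentially the same route as the paper: the lemma is stated immediately after the displayed rearrangement, with the paper simply remarking ``This expression proves the following fact,'' so the proof amounts to exactly the bookkeeping you describe. Your additional verification that $-\varepsilon\nabla F_\varepsilon(b)$ lies in the domain of $\exp_b$ via Brenier's theorem is a welcome bit of extra rigor that the paper leaves implicit in the convex-combination remark.
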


We can now prove Theorem~\ref{thm:median_guarantee}.

\begin{proof}[Proof of Theorem~\ref{thm:median_guarantee}]
    By smoothness (Lemma~\ref{lem:med_geod_smooth}), if we take step size $\eta = \varepsilon$,
    \begin{align*}
        F_\varepsilon(\Sigma_{t+1}) - F_\varepsilon(\Sigma_t)
        &\le - \frac{\varepsilon}{2} \, \norm{\nabla F_\varepsilon(\Sigma_t)}_{\Sigma_t}^2\,.
    \end{align*}
    The result follows by telescoping.
\end{proof}

\subsection{Reduction for non-zero means}\label{scn:non_centered_median}

In this section, we suppose that $P$ is supported on non-degenerate, not necessarily centered Gaussians, whose covariance matrices have eigenvalues bounded above by $\lambda_{\max}$. We begin with the observation that if $b^\star_{\rm median}$ denotes a Gaussian minimizer of the median functional for $P$, then the mean of $b^\star_{\rm median}$ is not necessarily the Euclidean geometric median of the means of distributions in $\supp P$. To see this, consider the case when the Gaussians are one-dimensional.
Then, if we identify each Gaussian $\mu \in \supp P$ with its mean and \emph{standard deviation} (the square root of the variance) $(m_\mu, \sigma_\mu)$, then the $W_2$ distance between Gaussians is isometric to the standard Euclidean metric on the pairs $(m,\sigma)$ in $\R^2$ (see Facts~\ref{fact:diagonal} and~\ref{fact:non_zero_mean} in Appendix~\ref{scn:facts}). Therefore, the Wasserstein geometric median of $P$ is equivalent to the Euclidean geometric median of the pairs $(m,\sigma)$, and the statement whose validity is being investigated is tantamount to asking: is the first coordinate of the Euclidean geometric median in $\R^2$ equal to the median of the first coordinates? This statement is manifestly false.

Next, we describe the reduction. Let $(m, \Sigma)$, $(m', \Sigma')$ denote two pairs of means and covariance matrices in the support of $P$. Then,
\begin{align*}
    W_2^2\bigl((m, \Sigma), (m', \Sigma')\bigr)
    &= \norm{m-m'}^2 + W_2^2(\Sigma,\Sigma')\,,
\end{align*}
where the LHS denotes the squared Wasserstein distance between Gaussians with parameters $(m,\Sigma)$ and $(m,\Sigma')$ respectively.
The idea behind the reduction is that since the Wasserstein metric on diagonal matrices is the same as the Euclidean metric between the \emph{square roots} of the matrices (Fact~\ref{fact:diagonal} in Appendix~\ref{scn:facts}), we can embed the mean vectors as diagonal matrices, and take the direct sum of these diagonal matrices with the covariance matrices to form augmented matrices; then, we can apply the geometric median algorithm (Algorithm~\ref{ALG:smoothed}) to the augmented matrices. In this reduction, however, we must take care that when we embed the mean vectors, we embed them into \emph{positive definite} diagonal matrices.

Hence, define the augmented matrices
\begin{align*}
    \mb\Sigma  \deq  \begin{bmatrix} \diag({(m + C)}^2) & \\ & \Sigma \end{bmatrix}\,, \qquad \mb\Sigma'  \deq  \begin{bmatrix} \diag({(m' + C)}^2) & \\ & \Sigma' \end{bmatrix}\,,
\end{align*}
where the constant $C \ge \max\{\norm m_\infty, \norm{m'}_\infty\}$ is chosen to ensure that $\mb \Sigma,\mb \Sigma' \succeq 0$ (and that $m + C, m' + C \ge 0$). The Wasserstein distance between the augmented matrices is
\begin{align*}
    W_2^2(\mb \Sigma,\mb \Sigma')
    &= \norm{(m+C) - (m'+C)}_{\rm F}^2 + W_2^2(\Sigma, \Sigma')
    = \norm{m-m'}^2 + W_2^2(\Sigma, \Sigma') \\
    &= W_2^2\bigl((m, \Sigma), (m', \Sigma')\bigr)\,.
\end{align*}
Hence, after preprocessing the mean vectors and covariance matrices to form these augmented matrices, we may apply Algorithm~\ref{ALG:smoothed} to the augmented matrices in a black box manner. It is easy to check that the set of such diagonal block matrices
(where the upper block is itself diagonal) is convex under generalized geodesics.
Hence, as long as the Algorithm~\ref{ALG:smoothed} is initialized
at such a matrix every iterate will remain in that form, and therefore the iterates
will, when transformed back through the augmentation operation described above,
indeed approach a stationary point for the original median problem.
The convergence guarantee of Theorem~\ref{thm:median_guarantee} then applies.

Of course, it is likely that analyzing smoothed Riemannian gradient descent directly for the non-centered case could produce sharper results,
but this simple approach already gives dimension-free convergence rates for the Bures--Wasserstein geometric median.

\section{Further experiments and details}\label{scn:experiment_details}

\paragraph*{Reproducibility details.} Input generation details for Figures~\ref{fig:gd and sgd vs d},~\ref{fig:high-precision barycenter},~\ref{fig:median precision}, and~\ref{fig:robustness} are provided in the main text. For Figures~\ref{fig:sgd vs esgd} and~\ref{fig:effect of regularization}, recall that we generated matrices from a distribution whose barycenter is known to be the identity. By \cite[Theorem 2]{zemel2019procrustes}, if the mean of the distribution $(\log_{I_d})_\# P$ is $0$, then $I_d$ is the barycenter of $P$. In particular, if $Q$ is a mean zero distribution supported on symmetric matrices that lie in the domain of the exponential map, then $P = (\exp_{I_d})_\# Q$ has $I_d$ as its barycenter. In our experiments, we defined $Q$ to be the law of a random matrix with Haar eigenbasis and uniform eigenvalues from the interval $[-(1-\delta), 1-\delta]$ for a parameter $\delta \in (0,1)$. At the identity, the exponential map takes the simple form $\exp_{I_d} S = (I_d + S)^2$ and we see that $P$ is then supported on covariance matrices with spectrum in $[\delta^2, (2-\delta)^2]$. Both figures were generated with $\delta=0.1$. All experiments were performed using Julia 1.5.1 on a desktop computer running Ubuntu 18.04 with an Intel i7-10700 CPU. 

\paragraph*{Further empirical comparisons.} Here we further investigate the comparison of Riemannian and Euclidean GD done in~\autoref{fig:high-precision barycenter} by demonstrating qualitatively similar results for a variety of synthetic datasets.
For each dataset, 
the measure $P$ is the empirical measure of $n$ matrices of dimension $d \times d$ that are drawn randomly as follows.
\begin{enumerate}
    \item Haar eigenbasis and linearly spaced eigenvalues in $[\alpha, \beta]$.
    \item\label{item:synthetic data} Haar eigenbasis and i.i.d. $\operatorname{Unif}[\alpha, \beta]$ eigenvalues.
    \item\label{item:synthetic data 3} First split the matrices into $3$ groups. Each matrix has Haar eigenbasis and i.i.d.\ $\operatorname{Unif}[\alpha, \beta]$ eigenvalues where $[\alpha,\beta
    ] = 10^{i}\times[1, \kappa]$ for $i \in \{-2,0,2\}$ depending on its group. 
    \item Same as method \ref{item:synthetic data} above, except all matrices have the same eigenbasis. (Note that GD converges in $1$ step here since the matrices commute.)
    \item\label{item:synthetic data 5} Haar eigenbasis and eigenvalues uniform on a set of size $m \leq d$, whose elements are i.i.d.\ $\operatorname{Unif}[\alpha, \beta]$. 
    \item Same as method \ref{item:synthetic data 5} above, except all matrices use the same eigenvalues.
    \item\label{item:synthetic mix} Mix of all methods above. 
\end{enumerate}

Figures~\ref{fig:bary synthetic kappa=1000} and~\ref{fig:bary synthetic kappa=2} compare Euclidean and Riemannian GD on the barycenter problem as in Figure~\ref{fig:high-precision barycenter}, but now with these $7$ different input families. We average well-conditioned matrices in~\autoref{fig:bary synthetic kappa=1000}, and ill-conditioned matrices in~\autoref{fig:bary synthetic kappa=2}. The plots are generated using $n=d=50$ and $m=d/4$. For Method \ref{item:synthetic mix}, the $50$ matrices are divided into $6$ groups of roughly equal size. The $y$-axis measures the $W_2^2$ distance to the best iterate; and the $x$-axis measures time in seconds.

In these figures we had to hand-tune the stepsize for Euclidean GD since the stepsize indicated by Theorem~\ref{thm:egd} performs quite poorly. We used the same range of stepsizes ($\eta \in \{15,25,40\}$) in all plots to demonstrate that the performance of Euclidean GD is quite sensitive to its stepsize. In contrast, GD performs well on all inputs with its (untuned) stepsize of $1$. 

\begin{figure}
    \centering
    \includegraphics[width=\textwidth]{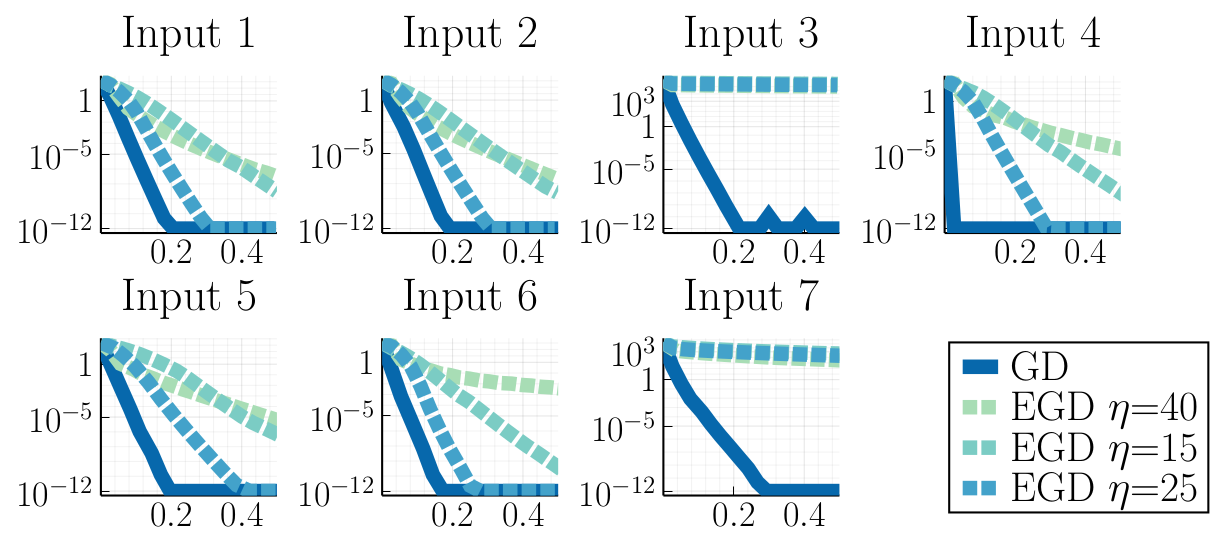}
    \caption{
        Comparison of high-precision barycenter algorithms for various types of synthetic data. Here, the matrices are poorly conditioned ($[\alpha, \beta] = [0.03,30]$ whereby $\kappa=1000$).
    }
    \label{fig:bary synthetic kappa=1000}
\end{figure}
\begin{figure}
    \centering
    \includegraphics[width=\textwidth]{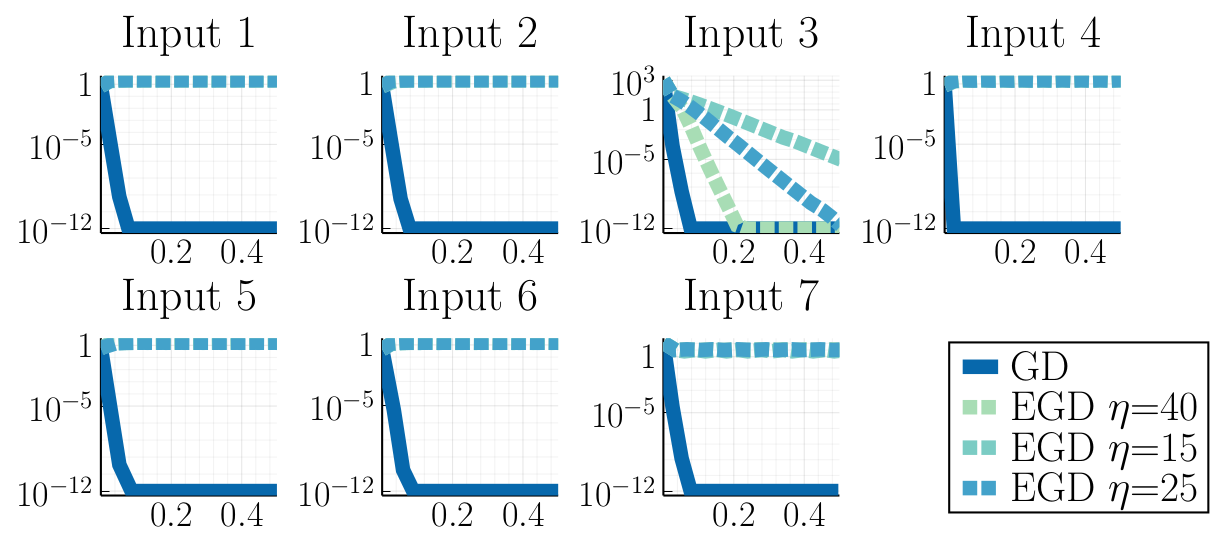}
    \caption{
        Comparison of high-precision barycenter algorithms for various types of synthetic data. Here, the matrices are well-conditioned ($[\alpha, \beta] = [1,2]$ whereby $\kappa = 2$).
    }
    \label{fig:bary synthetic kappa=2}
\end{figure}

\newpage
\RaggedRight{}
\printbibliography{}

\appendix

\end{document}